\numberwithin{equation}{section}
\newtheorem{proposition}{Proposition}[section]
\newtheorem{theorem}[proposition]{Theorem}
\newtheorem{lemma}[proposition]{Lemma}
\theoremstyle{definition}
\newtheorem{definition}{Definition}[section]
\DeclareMathOperator{\R}{\mathbb{R}}
\DeclareMathOperator{\Z}{\mathbb{Z}}
\DeclareMathOperator{\Lip}{Lip}
\DeclareMathOperator{\diam}{diam}
\DeclareMathOperator{\KR}{KR}
\newcommand{\norm}[1]{\lVert#1\rVert}
\newcommand{\abs}[1]{\lvert#1\rvert}
\newcommand{\CAT}{\operatorname{CAT}}
\renewcommand{\epsilon}{\varepsilon}
\DeclareMathOperator{\N}{\mathbb{N}}
\DeclareMathOperator{\Nagata}{Nagata}
\DeclareMathOperator{\LC}{LC}
\DeclareMathOperator{\hd}{hd^+}
\DeclareMathOperator{\Whitney}{Whitney}
\DeclareMathOperator{\NPC}{gNPC}
\DeclareMathOperator{\dimN}{\dim_N}
\DeclareMathOperator{\Vol}{Vol}
\patchcmd{\@setaddresses}{\indent}{\noindent}{}{}
\patchcmd{\@setaddresses}{\indent}{\noindent}{}{}
\patchcmd{\@setaddresses}{\indent}{\noindent}{}{}
\patchcmd{\@setaddresses}{\indent}{\noindent}{}{}
\begin{document}

\title[Lipschitz extension theorems with explicit constants]{Lipschitz extension theorems with \\ explicit constants}
\author{Giuliano Basso}
\address{Max Planck Institute for Mathematics, Vivatsgasse 7, 53111 Bonn, Germany}
\email{basso@mpim-bonn.mpg.de}

 \keywords{Extension of Lipschitz maps, Lipschitz \(n\)-connected, Nagata dimension, spaces of non-positive curvature, Banach spaces}
 \subjclass[2020]{Primary 54C20; Secondary 54F45}


\begin{abstract}
In this mostly expository article, we give streamlined proofs of several well-known Lipschitz extension theorems. We pay special attention to obtaining statements with explicit expressions for the extension constants. One of our main results is an explicit version of a very general Lipschitz extension theorem of Lang and Schlichenmaier. A special case of the theorem reads as follows: If \(X\) is any metric space and \(A\subset X\) satisfies the condition \(\text{Nagata}(n, c)\), then any \(1\)-Lipschitz map \(f\colon A \to Y\) to a Banach space \(Y\) admits a Lipschitz extension \(F\colon X \to Y\) whose Lipschitz constant is at most \(1000\cdot (c+1)\cdot \log_2(n+2)\). By specifying to doubling metric spaces, this recovers an extension result of Lee and Naor. We also revisit another theorem of Lee and Naor by showing that if \(A\subset X\) consists of \(n\) points, then Lipschitz extensions as above exist  with a Lipschitz constant of at most \(600 \cdot \log n \cdot (\log \log n)^{-1}\).
\end{abstract}

\maketitle

\vspace{-1em}

\section{Introduction}

In this article we revisit Lipschitz extension theorems of Lee and Naor \cite{lee--naor-2005}, and Lang and Schlichenmaier \cite{lang-schlichenmaier}.  These results should be viewed as metric analogues of Whitney's classical extension theorem \cite{whitney--1934}. They provide conditions under which a partially defined Lipschitz map on a metric space can be extended to the entire space. The nature of this
article is mostly expository. We give somewhat streamlined proofs of selected results of \cite{lee--naor-2005, lang-schlichenmaier}. However, we closely follow the original proof strategies. The \textit{raison d'être} for this article is that we made an effort to only work with explicit constants in our arguments and to carefully keep track of those constants. Thanks to this, we are able to state our results without using the usual asymptotic notation \(O(\cdot)\). We believe that finding the best possible constants in the theorems we discuss here is an interesting task, of which this article is just a first step forward. 

\subsection{Motivation and background}
In the following, we denote by \(X\) and \(Y\) metric spaces, where we call \(X\) the domain space and \(Y\) the target space.  The following general question about the extendability of Lipschitz maps is a natural object of study in metric geometry:
\vspace{0.1em}
\begin{quote}
What are conditions on the domain and target space such that any \(1\)-Lipschitz map \(f\colon A \to Y\), where \(A\subset X\) is an arbitrary subset, can be extended to an \(L\)-Lipschitz map \(F\colon X \to Y\)?
\end{quote} 
\vspace{0.1em}
Classically, researchers were mainly looking for extensions with \(L=1\). Such extensions exist for example if the target space is \(\R\) and the domain space is an arbitrary metric space. Indeed, a short computation reveals that \(F\colon X \to \R\) defined by
\[
F(x)=\inf_{a\in A}\big[ f(a)+d(a, x) ]
\]
is a \(1\)-Lipschitz extension of \(f\colon A \to \R\). This is generally considered as the first Lipschitz extension result and was proved by McShane \cite{McShane--1934} and also independently by Whitney \cite[p. 63]{whitney--1934}. Another classical result with \(L=1\) is Kirszbraun's theorem \cite{kirszbraun--1934, valentine--1945}, where both the target and domain spaces are Hilbert spaces. Also in this case the extension can be given by means of an explicit formula; see \cite{azagra--2021} and the references therein. 

A generalization of Kirszbraun's theorem in the setting of Alexandrov geometry was obtained by Lang and Schroeder \cite{alexander--2011, lang--schroeder}. They impose a lower curvature bound on the domain space and a matching upper curvature bound on the target space. In a similar spirit, by introducing the notions of Markov type and Markov cotype of Banach spaces, another deep generalization of Kirszbraun's theorem with \(L>1\) was proved by Ball \cite{ball--1992}. 

Other important results where \(L>1\) are often obtained in the following setting. Both the domain and target space are arbitrary Banach spaces, but the subset \(A\subset X\) is assumed to consist of at most \(n\) points. It is then proved that \(L\leq \alpha(n)\) for some function \(\alpha\colon\mathbb{N} \to \R\). The first Lipschitz extension result of this flavour was obtained by Marcus and Pisier in \cite{pisier--1984}. They showed that for Hilbert space targets one can take 
\[
\alpha(n)=C(p)\cdot (\log  n)^{1/p-1/2}
\]
if \(X=L_p\) for some \(1 <p <2\). Johnson and Lindenstrauss \cite{hilbert--1984} proved that for arbitrary domain spaces and Hilbert space targets, one can take \(\alpha(n)=2\cdot \sqrt{\log n}\). We remark that to prove their extension result, Johnson and Lindenstrauss relied on a certain geometric lemma, which is now known under the name Johnson--Lindenstrauss lemma and has found many applications in various areas of mathematics and computer science.

The first result for arbitrary Banach target spaces was obtained by Johnson, Lindenstrauss, and Schechtman \cite{lindenstrauss--1986}. They showed that one can take \(\alpha(n)=C\cdot \log n\). This was improved by Lee and Naor \cite{lee--naor-2005} to 
\[
\alpha(n)=C\cdot \frac{\log n}{\log\log n}.
\]
A lower bound on \(\alpha(n)\) was obtained by Naor and Rabani in \cite{rabani--2017}. Another (very general) lower bound on Lipschitz extension moduli was recently proved by Naor \cite{naor2021extension}. Roughly speaking, he showed that in any finite-dimensional Banach space there are subsets with bad Lipschitz extension properties. More precisely, there is a universal constant \(c>0\) such that the following holds. In every \(n\)-dimensional Banach space \(X\) there exist a subset \(A\subset X\) and a \(1\)-Lipschitz map \(f\colon A \to Y\) to a Banach space \(Y\), such that every Lipschitz extension \(F\colon X \to Y\) of \(f\) has Lipschitz constant at least \(n^c\). 

Many other Lipschitz extension results can be found in the books \cite{brudnyi--first-book, brudnyi--2012, cobzas--2019, wells-1975} and for some results in the linear category see e.g. \cite[Section III.B]{Wojtaszczyk--1991} or \cite[Section 8]{Tomczak--1989}. For an introduction to extension results in topology, I strongly recommend \cite{steenrod}.

\subsection{Spaces of finite Nagata dimension}

In this article, we will mostly consider arbitrary domain spaces \(X\), but we will restrict ourselves to subsets \(A\subset X\) that are finite-dimensional in a quantifiable metric sense. A few definitions are in order. A covering of a metric space is said to have \(s\)-multiplicity at most \(n\) if every subset of the space of diameter less than \(s\) meets at most \(n\) members of the covering. We emphasize that throughout this article we will always consider coverings by arbitrary subsets. The following definition is a variant of Gromov's asymptotic dimension, taking into account both large and small scale properties of the metric space. Let \(n\) be a non-negative integer and \(c\) a positive real number.

\begin{definition}\label{def:nagata}
We say that a metric space \(X\) satisfies the condition \(\Nagata(n, c)\) if for every \(s>0\) there exists a covering of \(X\) which has \(s\)-multiplicity at most \(n+1\) and every member of the covering has diameter at most \(c\cdot s\).    
\end{definition}

This definition is due to Assouad \cite{assouad--1982}, building on earlier work of Nagata \cite{nagata--1958}. Variants of Definition~\ref{def:nagata}, where only small or large scales are considered, are called linearly controlled metric dimension and Higson property, respectively. See \cite{buyalo--2007, dranishnikov--2004} for the precise definitions and further properties. 

The smallest \(n\geq 0\) such that \(X\) satisfies \(\Nagata(n, c)\) for some \(c\) is denoted by \(\dimN(X)\) and called the Nagata dimension of \(X\). The topological dimension \(\dim(X)\) of \(X\) is at most \(\dimN(X)\) (see \cite[Theorem~2.2]{lang-schlichenmaier}) and this inequality can be strict in general, as for example \(\dim(\Z)=0\), but \(\dimN(\Z)=1\). More dramatically, there are even finitely generated groups which have infinite Nagata dimension (see \cite{dydak-lang--2014}). On the other hand, examples of metric spaces \(X\) with \(\dim(X)=\dimN(X)\) include compact connected Riemannian manifolds, Euclidean buildings of finite rank, uniformly disconnected spaces in the sense of David and Semmes, and Carnot groups equipped with the Carnot-Carathéodory distance (see \cite{david--semmes--1997, lang-schlichenmaier, le-donne--2015}). 

A Hadamard manifold is a complete simply connected Riemannian manifold of non-positive sectional curvature. If the sectional curvature of a Hadamard manifold \(M\) is negatively pinched, then \(\dimN(M)=\dim(M)\), see \cite[Theorem 3.7]{lang-schlichenmaier}. Moreover, there exits a universal constant \(c\) such that every Hadamard manifold of dimension \(n=2\), \(3\) satisfies  \(\Nagata(n, c)\), see \cite[Theorem~5.7.3]{schlichenmaier--2005} and \cite[Theorem 3]{joergensen--2022}. We remark that it is an open question whether all Hadamard manifolds have finite Nagata dimension or not. 

Another important class of examples is obtained by considering minor-closed families of graphs. Very recently, building on a breakthrough result of \cite{bonamy2023asymptotic} for the asymptotic dimension,  Distel \cite{distel2023proper} and  Liu \cite{liu2023assouad} proved the following result. Let \(\mathcal{F}\)  be a minor-closed family of graphs not containing every finite graph.  Then there exists \(c=c(\mathcal{F})\) such that every member of \(\mathcal{F}\) satisfies \(\Nagata(2, c)\). Here, graphs are metrized in the usual way by equipping them with the shortest-path distance. 

Many results from classical dimension theory also apply to the Nagata dimension. For example, the product formula \(\dimN(X\times Y)\leq \dimN(X)+\dimN(Y)\) holds and also a Hurewicz type theorem for Lipschitz maps (see \cite{brodskiy--2008}). Moreover, analogous to the topological dimension, Nagata dimension can be characterized in terms of extension properties of maps to spheres. More precisely, Brodskiy, Dydak, Higes, and Mitra \cite{brodskiy--2009} showed that if \(X\) is a metric space of finite Nagata dimension, then \(\dimN(X)\leq n\) if and only if the pair \((X, S^n)\) has the Lipschitz extension property. See e.g.~\cite[Theorem 1.9.3]{engelking--1978} for analogous conditions for \(\dim(X)\leq n\).

\subsection{Lipschitz \(n\)-connected spaces}
We now proceed by introducing the type of target spaces considered by Lang and Schlichenmaier in \cite{lang-schlichenmaier}. Recall that a topological space is said to be \(n\)-connected if for every \(0 \leq m \leq n\), any continuous map from the \(m\)-sphere to the space admits a continuous extension to the \((m+1)\)-ball. Lang and Schlichenmaier adapted this definition to the metric setting as follows.

\begin{definition}
A metric space \(Y\) is called Lipschitz \(n\)-connected with constant \(\lambda\) (or satisfies the condition \(\LC(n, \lambda)\) for short) if for every \(0 \leq m \leq n\), every \(L\)--Lipschitz map \(f\colon S^{m} \to Y\) admits a \(\lambda\cdot L\)--Lipschitz extension \(F\colon B^{m+1}\to Y\). Here, \(B^{m+1}\subset \R^{m+1}\) denotes the Euclidean unit ball equipped with the induced metric and \(S^{m}=\partial B^{m+1}\) its boundary.
\end{definition}

For example, the sphere \(S^n\) is Lipschitz \((n-1)\)-connected (see \cite[Proposition 6.2.6]{schlichenmaier--2005}), but clearly not Lipschitz \(n\)-connected by Brouwer's theorem. On the other hand, the unit sphere in an infinite-dimensional Banach space is Lipschitz \(n\)-connected for every \(n\). In fact, Benyamini and Sternfeld \cite{Benyamini--1983} proved that these spheres are Lipschitz contractible and it is easy to check that any Lipschitz contractible space satisfies \(\LC(n, \lambda)\) for every \(n\) with a uniform constant \(\lambda\). In Section~\ref{sec:lipschitz-n-connectedness} we put some effort into finding optimal values of \(\lambda\). For example, we show that Banach spaces satisfy \(\LC(n, \sqrt{3})\) for every \(n\). We will prove this statement more generally for metric spaces admitting conical bicombings; see Proposition~\ref{prop:one}. Thus, for example, it is also valid for all Hadamard manifolds. 

There are also many examples of metric spaces satisfying \(\LC(n , \lambda)\) which do not admit conical bicombings. By results of Lang--Schlichenmaier \cite[Theorem 5.1]{lang-schlichenmaier} and Wenger--Young \cite[Theorem 1.1]{wenger--young--2010}, other examples of Lipschitz \(n\)-connected spaces are \(n\)-connected closed Riemannian manifolds, and  Carnot groups that can be realized as a jet space Carnot group \(J^s(\R^{n+1})\). Also certain horospheres in symmetric spaces of noncompact type of rank \(n\geq 2\) are Lipschitz \((n-2)\)-connected; see \cite{leuzinger--2017, young--2014}.

\subsection{Lang and Schlichenmaier's theorem} In view of the many examples stated above, the following explicit version of Lang and Schlichenmaier's extension theorem \cite[Theorem 1.6]{lang-schlichenmaier} seems to cover many of the possible settings in which one might wish to construct Lipschitz extensions.

\begin{theorem}[Lang--Schlichenmaier]\label{thm:LS-msot-general-version}
Let \(X\) be a metric space and \(A\subset X\) a closed subset satisfying \(\Nagata(n, c)\). Suppose \(Y\) is a metric space that is Lipschitz \(n\)-connected with constant \(\lambda\). Then every \(1\)-Lipschitz map \(f\colon A\to Y\) admits a
\begin{equation}\label{eq:estimate-we-worked-for-really-hard}
    10^{10^{10}} \cdot \lambda^{n+1}\cdot (c+1)^{10}\cdot (n+1)^{10\cdot n}
\end{equation}
Lipschitz extension \(F\colon X \to Y\).
\end{theorem}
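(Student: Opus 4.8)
The plan is to follow the classical Whitney-type extension scheme adapted to the metric setting, as in Lang and Schlichenmaier. First I would reduce to the case where $A$ is nonempty and, after rescaling, work with the ``outside'' region $X\setminus A$. Using the condition $\Nagata(n,c)$ on $A$ (or a Whitney-type decomposition of $X\setminus A$), I would construct a sequence of coverings of $X\setminus A$ at dyadic scales: for each $k\in\Z$, a covering $\mathcal U_k$ whose members have diameter comparable to $2^k$, such that the $2^k$-multiplicity is at most $n+1$, and such that each member of $\mathcal U_k$ lies at distance roughly $2^k$ from $A$. Subordinate to this I would build a Lipschitz partition of unity $\{\varphi_i\}$ on $X\setminus A$ whose ``nerve'' has dimension at most $n$; this is where the factor $(n+1)^{O(n)}$ and the dependence on $c$ enter, since one must control both the Lipschitz constants of the $\varphi_i$ and the combinatorics of overlaps. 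For each member $U_i$ of the covering I would pick an anchor point $a_i\in A$ with $d(a_i,U_i)$ comparable to $\diam(U_i)$ and $\operatorname{dist}(U_i,A)$.

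Next I would define the extension by ``filling in skeleta.'' The nerve of the covering $\{U_i\}$, realized as a simplicial complex $\Sigma$ of dimension at most $n$, comes with a canonical Lipschitz map $\kappa\colon X\setminus A\to \Sigma$ given by the partition of unity (barycentric coordinates). On the $0$-skeleton of $\Sigma$ I would set $F$ to be the value $f(a_i)$ at the vertex corresponding to $U_i$. Then, inductively over $m=0,1,\dots,n$, I would extend $F$ over the $(m+1)$-skeleton: a map on the boundary $S^m$ of each $(m+1)$-simplex is already defined, it is Lipschitz with controlled constant because anchor points of overlapping members are close (distance $O(c\cdot 2^k)$), and the hypothesis $\LC(n,\lambda)$ on $Y$ lets me extend it over the simplex at the cost of a factor $\lambda$. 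Iterating $n+1$ times is exactly what produces the $\lambda^{n+1}$ factor. Composing with $\kappa$ gives a Lipschitz map $X\setminus A\to Y$.

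The remaining and most delicate point is to check that this map glues continuously and Lipschitz-ly with $f$ across the boundary $\partial A$, i.e.\ that $F(x)\to f(a)$ with the correct quantitative rate as $x\to a\in A$, and to estimate the global Lipschitz constant by the standard case analysis on pairs $x,y$: both in $A$, both deep in $X\setminus A$ at comparable scales, both in $X\setminus A$ at very different scales, or one in $A$ and one outside. Each case is controlled by the local Lipschitz bound of the skeletal map together with the comparability $d(x,a_i)\asymp \operatorname{dist}(x,A)$; the cross-boundary estimate is handled by a telescoping argument along the dyadic scales. This bookkeeping is routine in structure but is precisely where every constant must be tracked, so I expect the main obstacle to be organizing the Whitney decomposition and the induction so that the accumulated multiplicative losses multiply out to the stated bound~\eqref{eq:estimate-we-worked-for-really-hard}, rather than any single conceptual difficulty. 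In particular, obtaining the polynomial dependence $(c+1)^{10}(n+1)^{10n}$ — as opposed to something exponential in $c$ — requires choosing the covering parameters (the ratio between the mesh scale $s$ and the target diameter $c\cdot s$, and the overlap radius of the partition of unity) with some care.
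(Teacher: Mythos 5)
Your overall strategy matches the paper's (and Lang--Schlichenmaier's original): build a Whitney-type covering of \(X\setminus A\), pass to a Lipschitz partition of unity and the associated nerve complex \(\Sigma\), map the vertices to anchor values \(f(a_i)\), extend inductively over skeleta using \(\LC(n,\lambda)\) at a cost of \(\lambda\) per dimension, and finally verify the Lipschitz bound across \(\partial A\). But there is a genuine gap in the part you treat as routine bookkeeping, and it is precisely what the paper identifies as ``the main effort'' of the proof.

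You assert that your scale-by-scale coverings \(\mathcal{U}_k\) combine to give a nerve of dimension at most \(n\), i.e.\ that the \emph{global} multiplicity over all scales is at most \(n+1\). That does not follow from your setup. Having \(2^k\)-multiplicity \(\leq n+1\) within each \(\mathcal{U}_k\) separately is not enough: a point near the interface between scales \(2^k\) and \(2^{k+1}\) is generically covered by members of both \(\mathcal{U}_k\) and \(\mathcal{U}_{k+1}\), so the combined multiplicity is on the order of \(2(n+1)\) or worse (the paper's straightforward construction, Proposition~\ref{prop:Theorem5.2Revisited}, only achieves \(3(n+1)\)). That gives a nerve of dimension \(\sim 2n\) or \(3n\). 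This is harmless when the target admits barycenters --- then one extends from the \(0\)-skeleton in a single step, as in the proof of Theorem~\ref{thm:Lang-Schlichenmaier-explicit} --- but it breaks your inductive skeletal extension, since \(\LC(n,\lambda)\) only lets you fill cells up to dimension \(n+1\). A nerve of dimension \(3n\) cannot be filled this way, so the Lipschitz map \(\kappa\colon X\setminus A\to\Sigma\) would land in a complex you cannot map to \(Y\).

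The missing ingredient is the \emph{colored} refinement of the Nagata cover together with an interleaving of scales, which is what actually drives the multiplicity down to \(n+1\). Concretely: a \(\Nagata(n,c)\) space admits, at each scale \(s\), a bounded covering that decomposes into \(n+1\) ``colors'' each consisting of mutually \(s\)-separated sets (Lemma~\ref{lem:colored-nagata}); one then works with annuli at scales \(r^{\,i + k/n}\), tying the fractional scale shift \(k/n\) to the color \(k\), and merges nearby cells, to obtain a covering of \(X\setminus A\) with genuine multiplicity \(n+1\) (Proposition~\ref{prop:refined-whitney-decomposition}). Only after this refinement is the nerve of dimension \(\leq n\), so that the iterated \(\LC\)-extension goes through with the \(\lambda^{n+1}\) loss you describe. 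The rest of your outline --- anchor points with \(d(a_i,U_i)\asymp\operatorname{dist}(U_i,A)\), the Johnson--Lindenstrauss--Schechtman partition of unity, and the telescoping boundary check --- matches the paper's argument (organized there via the \(\Whitney(n,\alpha,\delta,\gamma)\) and \((n,C)\)-simplicial extensor formalism and Theorem~\ref{thm:basso-theorem}) and is indeed routine once the covering is in place.
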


The reader may be surprised at the appearance of the exponential tower \(10^{10^{10}}\) in \eqref{eq:estimate-we-worked-for-really-hard}. As it turns out, this term is only an artifact of the proof. It arises because in an intermediate step to \eqref{eq:estimate-we-worked-for-really-hard} we encounter a term of the form \((C\cdot n)^n\) which we decided to estimate by a large multiple of \(n^n\), so that \eqref{eq:estimate-we-worked-for-really-hard} does not consist of too many terms. In fact \eqref{eq:estimate-we-worked-for-really-hard} can be replaced by the slightly more complicated bound
\begin{equation}\label{eq:more-complicated}
3 \cdot 10^{10}\cdot (c+1)^{10}\cdot (10^5 \cdot \lambda)^{n+1} \cdot (n+1)^{6\cdot n}.
\end{equation}
Depending on the value of \(n\) one might prefer \eqref{eq:more-complicated} over \eqref{eq:estimate-we-worked-for-really-hard}, or vice versa. We remark that in the special case when \(n=0\), a much better bound than \eqref{eq:more-complicated} was obtained in \cite{basso--sidler}. There, it is shown that if \(n=0\), then \(f\) admits an \(8 c\lambda\)-Lipschitz extension (which factors through a metric \(\R\)-tree). 

Our proof of Theorem~\ref{thm:LS-msot-general-version} closely follows Lang and Schlichenmaier's original proof. The main idea is to construct an extension \(F\colon X \to Y\) such that \(F\) restricted to \(X\setminus A\)  factors through a simplicial complex \(\Sigma\) of dimension at most \(n+1\). This idea has its roots in classical topology where it is used for example in the proof of the Kuratowski--Dugundji extension theorem \cite{ dugundji--1958, kuratowski--1935}.  The complex \(\Sigma\) will be the nerve complex of a covering of \(X\setminus A\). This covering has properties that mimic the classical Whitney cube decomposition of an open set in Euclidean space. The map \(F\) is then obtained by successive extensions of a certain map defined on the \(0\)-skeleton of \(\Sigma\). The constant \(\approx \lambda^n \cdot n^n\) appearing in \eqref{eq:estimate-we-worked-for-really-hard} is due to the fact that we need to apply \(\LC(n, \lambda)\) a total of \((n+1)\)-times to inductively extend this map to the \((n+1)\)-skeleton of \(\Sigma\), which agrees with \(\Sigma\).

\subsection{Target spaces of generalized non-positive curvature} For many target spaces of interest such as Banach spaces or Hadamard manifolds, the above inductive application of \(\LC(n, \lambda)\) can be avoided. The main reason for this is that (informally speaking) such spaces admit “barycentric coordinates". By means of these coordinates, Lipschitz maps defined on the vertices of a simplicial complex can be extended to the entire complex in a single step. For example, if \(Y\) is a Banach space and \(f\colon \Sigma^{(0)}\to Y\) is defined on the vertices \(\Sigma^{(0)}\) of a pure \(n\)-dimensional simplicial complex \(\Sigma\), then \(F\colon \Sigma \to Y\) which on each \(n\)-simplex \(\Delta \subset \Sigma\) is defined by
\[
F(\sum_{i=0}^{n} \alpha_i v_i)=\sum_{i=0}^{n} \alpha_i f(v_i)
\]
extends \(f\). Moreover, this extension has the same Lipschitz constant as \(f\) if \(\Sigma \subset \R^I\) is equipped with the metric induced by the \(\ell_1\)-norm on \(\R^{I}\). 

Using a barycenter construction going back to Es-Sahib and Heinich \cite{es-sahib-1999}, one can show that any \(Y\) satisfying the following definition admits “barycentric coordinates" and thus extensions as above exist for such target spaces.

\begin{definition}\label{def:generalized-non-positive-curvature}
We say that a metric space \(Y\) is a space of generalized non-positive curvature (a \(\NPC\) space for short) if it admits a symmetric map \(m\colon Y \times Y \to Y\) such that \(m(y,y)=y\) for all \(y\in Y\), and also
\begin{equation}\label{eq:conical}
d(m(x, y), m(x, z))\leq  \frac{1}{2} \cdot d(y,z)    
\end{equation}
for all \(x\), \(y\), \(z\in Y\).
\end{definition}

It follows directly from the definition that \(m(x, y)\) is a metric midpoint of \(x\) and \(y\). In particular, any complete \(\NPC\) space is a geodesic metric space (see \cite[p. 4]{bridson--1999} for the definition). We remark that for midpoints in Euclidean space, the inequality in \eqref{eq:conical} becomes an equality. Therefore, for complete metric spaces, \eqref{eq:conical} can be interpreted as follows.  For any three points \(x\), \(y\), \(z\) in \(Y\) there is a geodesic triangle in \(Y\) with vertices \(x\), \(y\), \(z\) such that the distances between the midpoints of the sides are no greater than the corresponding distances between the midpoints in the Euclidean comparison triangle. It may be helpful to compare this with the definition of a \(\CAT(0)\) space, where it is required that all geodesic triangles in the space are not thicker than their comparison triangles in the Euclidean plane.

Examples of spaces of generalized non-positive curvature are Banach spaces (with \(m(x, y)=\tfrac{1}{2}(x+y)\)) and \(\CAT(0)\) spaces, where \(m(x, y)\) denotes the unique midpoint of \(x\) and \(y\). Additional examples are Busemann spaces \cite{Papadopoulos--2005} and injective metric spaces \cite{lang--2013}. On the other hand, complete \(\NPC\) spaces are contractible, and thus the circle \(S^1\) is not a \(\NPC\) space. A complete characterization of \(\NPC\) spaces as certain “convex" subsets of injective metric spaces can be found in \cite{basso2020extending}. The class of \(\NPC\) spaces enjoys many good structural properties. For example, it is closed under Gromov-Hausdorff limits, \(\ell_p\)-products for \(p \in [1, \infty]\) and  also \(1\)-Lipschitz retractions.

For target spaces of generalized non-positive curvature, the upper bound on the Lipschitz constant in Theorem~\ref{thm:LS-msot-general-version} can be improved considerably.

\begin{theorem}\label{thm:Lang-Schlichenmaier-explicit}
Let \(X\) be a metric space and \(A\subset X\) a subset satisfying \(\Nagata(n, c)\). Suppose that \(Y\) is a complete metric space of generalized non-positive curvature. Then every \(1\)-Lipschitz map \(f\colon A\to Y\) admits a
\begin{equation}\label{eq:estimate-of-F}
1000\cdot(c+1)\cdot\log_2(n+2)
\end{equation}
Lipschitz extension \(F\colon X\to Y\).
\end{theorem}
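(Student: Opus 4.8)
The plan is to follow the nerve-complex strategy of Lang and Schlichenmaier, with the essential simplification that when the target is a complete \(\NPC\) space, a map defined on the vertices of the relevant simplicial complex can be extended to the whole complex \emph{in a single step}, by ``barycentric coordinates'', without the skeleton-by-skeleton induction that produces the term \(\approx\lambda^{n}n^{n}\) in the \(\LC(n,\lambda)\) version. Consequently the only logarithm in \eqref{eq:estimate-of-F} should come from the step in which the cover supplied by \(\Nagata(n, c)\) is turned into (random) metric partitions; this is exactly Lee and Naor's mechanism. Completeness of \(Y\) is used both to make sense of the barycenters and, implicitly, to average over the randomness.

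\emph{Step 1: from \(\Nagata(n, c)\) to a Lipschitz map into a nerve.} Using \(\Nagata(n, c)\) at every dyadic scale \(2^{k}\), \(k\in\Z\), I would first build a Whitney-type cover \(\mathcal{B}=\{B_i\}_{i\in I}\) of the open set \(X\setminus A\): each \(B_i\) has a scale \(r_i\) comparable to \(d(B_i,A)\) with \(\diam B_i\le c\,r_i\), the cover has \(s\)-multiplicity at most \(n+2\) at the relevant scales (one more than the Nagata multiplicity of \(A\), since members of consecutive scales overlap), and every member meeting a small ball about \(x\in X\setminus A\) is within \(\lesssim(c+1)\,d(x,A)\) of \(x\). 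Let \(\Sigma\subset\R^{I}\) be the nerve of \(\mathcal{B}\), a simplicial complex of dimension at most \(n+1\), carrying a scale-rescaled \(\ell_1\)-metric (the simplex spanned by cells of scale \(\approx 2^{k}\) rescaled by \(2^{k}\); this is consistent up to bounded distortion, as incident cells have comparable scales). The quantitative heart of this step is a map \(\pi\colon X\setminus A\to\Sigma\), supported at each \(x\) on the simplex spanned by the members of \(\mathcal{B}\) through \(x\) and equal near \(A\) to a ``nearest vertex'', with Lipschitz constant \(\lesssim(c+1)\log_2(n+2)\). As in Lee and Naor, one passes from \(\mathcal{B}\) to random metric partitions of each scale (a Calinescu--Karloff--Rabani type construction): the probability that a motion of size \(\delta\) separates a point at scale \(t\) is \(\lesssim(c+1)\log_2(n+2)\,\delta/t\), the factor \(\log_2(n+2)\) being the price of turning a cover of multiplicity \(\le n+2\) into a partition, and averaging the induced random maps into the fixed nerve \(\Sigma\) yields such a \(\pi\).

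\emph{Step 2: barycentric extension of the trace map.} Choose \(a_i\in A\) with \(d(a_i,B_i)\) comparable to \(d(B_i,A)\) and define \(g\colon\Sigma^{(0)}\to Y\) by \(g(e_i)=f(a_i)\); since \(f\) is \(1\)-Lipschitz and incident cells have comparable scales, \(g\) is \(\lesssim(c+1)\)-Lipschitz on the \(0\)-skeleton for the metric on \(\Sigma\). Using the midpoint map \(m\) of Definition~\ref{def:generalized-non-positive-curvature} and completeness of \(Y\), I would run the Es-Sahib--Heinich barycenter construction \cite{es-sahib-1999} to extend \(g\) to \(\bar g\colon\Sigma\to Y\), which on each simplex is a weighted barycenter of the values of \(g\) at its vertices, consistently across faces. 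The point to prove is that there is no loss in the dimension: \(\bar g\) is Lipschitz with constant at most an absolute multiple of \(\Lip(g|_{\Sigma^{(0)}})\). A barycenter of at most \(n+2\) points is assembled from \(\lceil\log_2(n+2)\rceil\) rounds of pairwise midpoint averaging, and the contraction inequality \eqref{eq:conical} together with \(m(y,y)=y\) is exactly what makes these rounds compose with a bounded total constant — the large motion of a midpoint over a simplex with a tiny barycentric coordinate being compensated, via a transport-type estimate, by the correspondingly small weight with which that midpoint re-enters the next round. This is the \(\NPC\) analogue of the fact, recalled above, that the affine extension over an \(\ell_1\)-simplex preserves Lipschitz constants for Banach targets.

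\emph{Step 3: assembling \(F\), and the main obstacle.} Set \(F|_A=f\) and \(F=\bar g\circ\pi\) on \(X\setminus A\). On \(X\setminus A\) one estimates \(d(F(x),F(y))\) not through the lossy product \(\Lip\bar g\cdot\Lip\pi\) but by noting that, for \(x,y\) close, \(\pi(x)\) and \(\pi(y)\) lie in a common simplex of diameter \(\lesssim d(x,A)\) on which the vertex values of \(g\) lie within \(\lesssim(c+1)\,d(x,A)\) of one another; for \(x\in X\setminus A\) and \(a\in A\), every member of \(\mathcal{B}\) through \(x\) is assigned a point of \(A\) within \(\lesssim(c+1)\,d(x,A)\le(c+1)\,d(x,a)\) of \(x\), so \(F(x)=\bar g(\pi(x))\) lies within \(\lesssim(c+1)\,d(x,a)\) of \(f(a)\). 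Together these give \(\Lip F\lesssim(c+1)\log_2(n+2)\) on all of \(X\). The remaining — and in my view main — difficulty is Step 1: extracting Calinescu--Karloff--Rabani type partitions from \(\Nagata(n, c)\) with an \emph{explicit} separation constant and assembling them across scales into a single \(\pi\) that is also well-behaved near \(A\), in such a way that, combined with the scale-rescaled metric on \(\Sigma\) and the dimension-free estimate of Step 2, the various constants multiply out to at most \(1000\cdot(c+1)\cdot\log_2(n+2)\) — that is, so that \(c+1\) enters to the first power and the logarithm exactly once. The explicit barycenter estimate of Step 2 and this final bookkeeping are the other two technical pieces.
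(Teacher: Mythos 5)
Your overall strategy is right in spirit — a Whitney-type cover of \(X\setminus A\), a single-step barycentric extension using the midpoint map, and a logarithm coming from the multiplicity of the cover rather than from any skeleton induction. But the mechanism you propose for producing the \(\log_2(n+2)\) factor in Step 1 is not what the paper does, and I do not believe it works as stated. You want to import Lee--Naor's Calinescu--Karloff--Rabani random-partition machinery and claim a padding probability \(\lesssim (c+1)\log_2(n+2)\,\delta/t\). However, CKR-type padded decompositions are calibrated against the \emph{growth} of balls (a ratio of cardinalities, or a doubling constant), not against the Nagata multiplicity; the paper's own discussion notes that the best previously known constant on exactly this route (Naor--Silberman \cite{naor--silbermann--2011}, which does treat Nagata domains and \(\CAT(0)\) targets using random partitions) was \(O(c\cdot n^3)\), not \(O(c\log n)\). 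So ``the price of turning a cover of multiplicity \(\le n+2\) into a partition'' is the right heuristic for where the log lives, but random partitions are the wrong device for extracting it from \(\Nagata(n,c)\).

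The paper obtains the logarithm by a much more direct, deterministic trick of Johnson, Lindenstrauss, and Schechtman applied to a Lipschitz partition of unity, and this is the key missing idea in your proposal. Concretely: after building the Whitney cover \((B_i)\) with \(N_{\delta r_i}(B_i)\) having multiplicity at most \(3(n+1)\) (Proposition~\ref{prop:Theorem5.2Revisited}), one sets \(\psi_i(x)=d(x, X\setminus U_i)^m\), normalizes \(\phi_i=\psi_i/\sum_j\psi_j\), and a single application of H\"older's inequality with exponent \(m\) gives
\(\sum_i\Lip\phi_i(x)\le 2m\bigl(3(n+1)/\psi(x)\bigr)^{1/m}\);
choosing \(m=\log(3(n+1))\) then yields \(\sum_i\Lip\phi_i(x)\le 6\log(3(n+1))/(\delta r_j)\) (Lemma~\ref{lem:partition-of-unity}). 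There is no randomness, no averaging, and no need to pass through random partitions of each scale. The extension is then defined directly, without an explicit trip through the nerve, as \(F(x)=\beta\bigl(\sum_i\phi_i(x)\delta_{f(a_i)}\bigr)\) where \(\beta\) is the \(1\)-Lipschitz barycenter map of Descombes/Sturm; the dimension-free bound you anticipate in Step 2 is exactly Lemma~\ref{lem:barycentric-properties} (an \(\ell_1\)-type estimate \(W_1(\mu,\nu)\le \tfrac{D}{2}\|\mu-\nu\|_{\mathrm{TV}}\) for measures on a common finite support of diameter \(D\)) composed with \(\beta\), so your Es-Sahib--Heinich ``rounds of midpoint averaging'' analysis, while morally correct, is more machinery than is needed. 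Your Step 3 bookkeeping is essentially the same as the paper's, once the partition-of-unity bound replaces the random-partition claim.
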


A version of the theorem with \eqref{eq:estimate-of-F} replaced by \(O(c\cdot n^3)\) was proved by Naor and Silberman in \cite[Corollary 5.2]{naor--silbermann--2011}. Their result resolves a conjecture of Brudnyi and Brudnyi \cite[p. 48]{brudnyi--2012}, who asked whether, for spaces \(A\subset X\) satisfying  \(\Nagata(n, c)\) and \(\CAT(0)\) target spaces \(Y\), it is always possible to extend \(1\)-Lipschitz maps \(A\to Y\) to Lipschitz maps \(X\to Y\), whose Lipschitz constants are bounded by \(K \cdot n^a\), for some \(K\), \(a>0\) depending only on the Nagata constant \(c\). The sublinear dependence of \eqref{eq:estimate-of-F} on the parameters \(n\),~\(c\) shows that Lipschitz extensions in this setting have even better properties than what was generally expected.

The main idea behind the proofs of the Theorems~\ref{thm:LS-msot-general-version} and \ref{thm:Lang-Schlichenmaier-explicit} is the same. In particular, the restriction of \(F\) to \(X\setminus A\) thus factorizes by construction through a simplicial complex \(\Sigma\), which is the nerve complex of a Whitney-type covering of \(X\setminus A\). The appearance of \(\log_2(n+2)\) in \eqref{eq:estimate-of-F} stems from the fact that we use a method of Johnson, Lindenstrauss and Schechtman \cite{lindenstrauss--1986} to construct particularly good Lipschitz partitions of unity. The main difference between the proofs is that, as mentioned above, we use “barycentric coordinates" to extend maps from \( \Sigma^{(0)}\) to \(Y\) in one single step. More precisely, by a result of Descombes \cite{descombes--2016} we know that spaces of generalized non-positive curvature \(Y\) admit a barycenter map \(\beta\colon \mathcal{P}_1(Y)\to Y\) in the sense of Sturm \cite[Remark 6.4]{sturm--2003}. Here, \(\mathcal{P}_1(Y)\) denotes the space of Radon probability measures on \(Y\) with a bounded first moment. Using such a barycenter map \(\beta\), one can extend any \(f\colon \Sigma^{(0)}\to Y\) to a map \(F\colon \Sigma \to Y\) by setting
\[
F(\sum_{i=0}^{n} \alpha_i v_i)=\beta\big( \sum_{i=0}^{n} \alpha_i \, \delta_{f(v_i)}\big)
\]
on each \(n\)-simplex \(\Delta\subset \Sigma\). Since \(\beta\) is \(1\)-Lipschitz with respect to the \(1\)-Wasserstein distance, this extension has the desired properties. The necessary background on Wasserstein distances is recalled in Section~\ref{sec:spaces-of-generalized-non-po-curvature}. 

For a given domain space \(X\), for which it is known from general considerations that it satisfies \(\Nagata(n, c)\) for some \(c\), it is often very difficult to obtain a good estimate on \(c\). For example, already the asymptotic behaviour of the smallest \(c\) such that \(\R^n\) satisfies \(\Nagata(n, c)\) seems to be unknown. Other instances where \(n\) is known but the estimation of \(c\) is difficult are closed Riemannian \(n\)-manifolds. For such spaces, the following Lipschitz extension result sometimes yields better bounds than Theorem~\ref{thm:Lang-Schlichenmaier-explicit}.

\begin{theorem}\label{thm:bilipschitz-triangulation}
Let \(X\) be a compact connected Riemannian \(n\)-manifold for some \(n\geq 2\). Suppose that \(X\) admits a triangulation \(\Sigma \to X\) which is \(D\)-bi-Lipschitz when restricted to any \(n\)-simplex and the number of \(n\)-simplices of \(\Sigma\) is bounded by \(N\). Then every \(1\)-Lipschitz map \(f\colon A \to Y\) defined on \(A\subset X\) mapping to a complete \(\CAT(0)\) space \(Y\) admits a 
\[
D\cdot N^{10 \log(n)}
\]
Lipschitz extension \(F\colon X \to Y\).
\end{theorem}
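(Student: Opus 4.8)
\emph{Overview.} The plan is to use the triangulation only as combinatorial scaffolding, to observe that the target $Y$ contributes nothing beyond a single barycenter map, and to fill the scaffold by a recursion that halves the dimension in roughly $\log_2 n$ steps, each step costing a bounded power of $N$; this is what produces a bound of the shape $N^{O(\log n)}$. Since $Y$ is a complete $\CAT(0)$ space it is a complete $\NPC$ space, so by the results of Descombes and Sturm recalled in Section~\ref{sec:spaces-of-generalized-non-po-curvature} it admits a barycenter map $\beta\colon\mathcal{P}_1(Y)\to Y$ that is $1$-Lipschitz for the $1$-Wasserstein distance. We use $Y$ only through $\beta$, via two facts: (a) a Lipschitz map defined on the vertices of a simplex extends over the whole simplex via barycentric coordinates composed with $\beta$, with no increase of the Lipschitz constant when the simplex carries the $\ell_1$-metric, exactly as in the discussion preceding Definition~\ref{def:generalized-non-positive-curvature}; and (b) if a Euclidean polytope is written as a join $P\ast Q$ and Lipschitz maps into $Y$ are given on $P$ and on $Q$, then running the $\CAT(0)$-geodesic between the two values as the join parameter varies yields a Lipschitz map on $P\ast Q$, with Lipschitz constant controlled by those of the two pieces up to a universal factor.

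\emph{Reduction to the complex, absorbing $D$.} Since $Y$ is complete we may assume $A$ is closed. We build $F$ directly on $X$, using $\Sigma$ as scaffolding: each $n$-simplex of $\Sigma$ is, via the triangulation $\iota$, a subset of $X$ carrying a $D$-bi-Lipschitz identification with the standard Euclidean $n$-simplex, and every interpolation performed inside a simplex goes through this identification together with $\beta$. Because $X$ is a length space, a curve in $X$ may be cut along the triangulation into arcs lying in single simplices, so a map that is $L$-Lipschitz (for the intrinsic metric) on each simplex-image is automatically $L$-Lipschitz on $X$; hence the distortion $D$ enters the final estimate only through the intra-simplex interpolations, and only as a single overall factor. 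It therefore remains to extend the given $1$-Lipschitz map over the scaffold with a \emph{purely combinatorial} blowup of at most $N^{10\log n}$.

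\emph{The recursion.} Away from $A$, the scaffold is filled by a Whitney-type procedure relative to $A$, following the proof of Theorem~\ref{thm:Lang-Schlichenmaier-explicit}: cover the complement of $A$ by pieces whose size matches their distance to $A$, pick for each piece a nearby point of $A$, and interpolate the prescribed values there through $\beta$ using a Lipschitz partition of unity subordinate to the cover. Two things differ from that proof. First, the cover is produced from the \emph{explicit} Euclidean structure of the simplices of $\Sigma$ and their iterated subdivisions, rather than from an abstract $\Nagata(n,c)$-cover of $X$ — this is precisely the point of the hypothesis, since the Nagata constant of $X$ itself is uncontrolled while that of a bounded-distortion Euclidean simplex is explicit; the relevant multiplicities and the diameters of the clusters of chosen points of $A$ are then bounded in terms of the combinatorial size $N$, costing a fixed power of $N$. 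Second, to keep $n$ from entering more than logarithmically, the filling over the $A$-free Euclidean pieces is not carried out one skeleton at a time but by the halving device (b): writing the standard $n$-simplex as the Euclidean join $\Delta^{\lfloor n/2\rfloor}\ast\Delta^{\lceil (n-1)/2\rceil}$ and recursing on the two factors, one reaches the vertices after $O(\log n)$ halvings, where (a) finishes the job. Interleaving this halving with the Whitney construction at each of the $O(\log n)$ levels and multiplying the per-level losses gives a combinatorial blowup of at most $N^{10\log n}$; together with the factor $D$ from the previous step, this yields the theorem.

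\emph{Main obstacle.} The heart of the matter — and the place where the explicit $10\log n$ must be extracted — is the bookkeeping in the recursion. One must (i) run the Whitney/partition-of-unity construction so that the datum transmitted to the two join factors at each halving is itself Lipschitz with a controlled constant, so that the per-level losses compound multiplicatively and do not inherit the aspect ratios accumulated at earlier levels; (ii) bound, uniformly over all scales, the multiplicity of the subdivision-based cover and the cluster diameters by a clean power of $N$, \emph{independently} of the Riemannian geometry of $X$; and (iii) verify that the $\CAT(0)$-join interpolation (b) costs only a universal factor per level. Once these are in place, composing the $O(\log n)$ levels and tracking constants produces $N^{10\log n}$, and the factor $D$ is carried along from the reduction step.
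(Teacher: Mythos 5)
Your proposal takes a genuinely different route from the paper, but it has a gap that is not merely bookkeeping: the key claim driving the bound—that each "dimension-halving" level of the recursion costs a fixed power of $N$—is unsupported, and I do not see how to make it true. A dimension-halving step (writing $\Delta^n$ as a join of two lower-dimensional simplices) is a purely local operation inside a single Euclidean simplex; the number $N$ of top-dimensional simplices in $\Sigma$ has no obvious reason to control its cost. You also gesture at bounding Whitney-cover multiplicities and cluster diameters "in terms of $N$," but the multiplicity of a Whitney cover of a subset of $\Sigma$ would be governed by the Nagata dimension ($\approx n$) and the cluster diameters by a Nagata constant, neither of which is visibly $N^{O(1)}$. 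Even if one could push this through, a Whitney/barycenter argument of the kind in Theorem~\ref{thm:Lang-Schlichenmaier-explicit} inherently pays an extra $\log_2(n+2)$ from the Johnson--Lindenstrauss--Schechtman partition of unity, so your route would not reproduce the clean bound $D\cdot N^{10\log n}$.

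The paper's proof sidesteps all of this and makes no Whitney construction at all. The observation you are missing is that $\Sigma$ with the $\ell_2$-metric is, by definition, a subset of the Hilbert space $\ell_2(I)$, so the generalized Kirszbraun theorem of Lang and Schroeder immediately furnishes a $1$-Lipschitz extension of any $1$-Lipschitz map into a complete $\CAT(0)$ space—with \emph{no} loss whatsoever. Consequently the entire loss $D\cdot N^{10\log n}$ is a \emph{change-of-metric} factor: $D$ from passing between $X$ and $\Sigma$ with the length metric (using the hypothesis on the triangulation), and $N^{10\log n}$ from passing between the length metric and the $\ell_2$-metric on $\Sigma$, which is exactly the quasiconvexity bound of Lemma~\ref{lem:length-metric-vs-l2-metric}. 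In that lemma the sources of $N$ and $n$ are precisely the reverse of what your recursion posits: chains of adjacent $n$-simplices have length at most $N$, hence are halved $O(\log N)$ times, and each halving costs $4\sqrt{n}$ by the elementary geometry of Lemma~\ref{lem:simplicial-geometry}, giving $(4\sqrt n)^{O(\log N)} = N^{O(\log n)}$. Your proposal has $\log n$ levels each costing $N^{O(1)}$, which happens to have the same shape but does not match the actual mechanism and is not justified.
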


The proof of the theorem is quite short and relies on straightforward properties of simplicial complexes and the generalized Kirszbraun theorem of Lang and Schroeder \cite{lang--schroeder}.

\subsection{Extension results of Lee and Naor} We now turn our attention to some of the results of Lee and Naor from \cite{lee--naor-2005}. A metric space \(X\) is called \(M\)-doubling if every ball in \(X\) can be covered by at most \(M\) balls of half the radius. It follows directly from \cite[Lemma~2.3]{lang-schlichenmaier} that any \(M\)-doubling metric spaces satisfies \(\Nagata(M^3, 2)\). Thus, all the Lipschitz extension theorems above apply specifically to doubling metric spaces. In particular, the following explicit version of \cite[Theorem~1.6]{lee--naor-2005} is a direct consequence of Theorem~\ref{thm:Lang-Schlichenmaier-explicit}.

\begin{theorem}[Lee--Naor -- doubling spaces]\label{thm:lee-and-naor-doubling} Let \(X\) be a metric space and \(A\subset X\) a subset that is \(M\)-doubling for some \(M\geq 2\). Suppose that \(Y\) is a Banach space. Then every \(1\)-Lipschitz map \(f\colon A\to Y\) admits a
\begin{equation*}
10^5\cdot \log(M) 
\end{equation*}
Lipschitz extension \(F\colon X\to Y\).
\end{theorem}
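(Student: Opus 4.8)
The plan is to deduce Theorem~\ref{thm:lee-and-naor-doubling} directly from Theorem~\ref{thm:Lang-Schlichenmaier-explicit} by feeding in the standard fact that doubling spaces have controlled Nagata dimension. Concretely, I would first invoke the implication, attributed above to \cite[Lemma~2.3]{lang-schlichenmaier}, that an \(M\)-doubling metric space \(A\) satisfies \(\Nagata(M^3, 2)\); thus we may apply Theorem~\ref{thm:Lang-Schlichenmaier-explicit} with the parameters \(n = M^3\) and \(c = 2\). Since any Banach space \(Y\) is a complete metric space of generalized non-positive curvature (taking \(m(x,y) = \tfrac12(x+y)\), which visibly satisfies \eqref{eq:conical} with equality), the hypotheses of Theorem~\ref{thm:Lang-Schlichenmaier-explicit} are met, and we obtain a Lipschitz extension \(F\colon X \to Y\) of \(f\) with Lipschitz constant at most
\[
1000 \cdot (2+1) \cdot \log_2(M^3 + 2) = 3000 \cdot \log_2(M^3+2).
\]

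It then remains to bound this quantity by \(10^5 \cdot \log(M)\) for all \(M \geq 2\). I would estimate \(\log_2(M^3 + 2) \leq \log_2(2 M^3) = 1 + 3\log_2 M\) and convert to natural logarithms via \(\log_2 M = \log(M)/\log 2\). For \(M \geq 2\) we have \(\log(M) \geq \log 2\), so \(1 \leq \log_2 M\) and hence \(1 + 3\log_2 M \leq 4\log_2 M = (4/\log 2)\log(M) \leq 6\log(M)\). Therefore the Lipschitz constant is at most \(3000 \cdot 6 \log(M) = 18000 \log(M) \leq 10^5 \log(M)\), as claimed. The only mildly delicate point is keeping the chain of numerical inequalities honest, but there is ample slack between \(18000\) and \(10^5\), so no sharp estimate is needed.

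I do not anticipate any genuine obstacle here: the content of the theorem is entirely contained in Theorem~\ref{thm:Lang-Schlichenmaier-explicit} together with the cited doubling-to-Nagata lemma, and the proof is a short substitution-and-arithmetic argument. If one wanted a marginally better constant one could track the covering that realizes the doubling condition more carefully — the exponent \(3\) in \(M^3\) is not optimal in all regimes — but since the bound only enters logarithmically and the target constant \(10^5\) is generous, this refinement is unnecessary for the stated result.
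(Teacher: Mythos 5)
Your proof is correct and is exactly the route the paper intends: it cites \cite[Lemma~2.3]{lang-schlichenmaier} to get $\Nagata(M^3,2)$ from $M$-doubling, notes that Banach spaces are complete $\NPC$ spaces, and treats Theorem~\ref{thm:lee-and-naor-doubling} as an immediate corollary of Theorem~\ref{thm:Lang-Schlichenmaier-explicit}. Your arithmetic ($3000\cdot\log_2(M^3+2)\leq 18000\log M\leq 10^5\log M$ for $M\geq 2$) is the only missing step in the paper's terse presentation, and it checks out.
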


A streamlined proof of this theorem has already been given by Brudnyi and Brudnyi in \cite{brudnyi--2006}. Their proof is constructive and uses a Lipschitz extension operator based on the integral average over balls. However, the obtained universal constant is not specified, and it probably requires some work to make it explicit. A generalization of Theorem~\ref{thm:lee-and-naor-doubling} to spaces of pointwise homogeneous type, which include, for example, the standard hyperbolic spaces \(\mathbb{H}^n\), can be found in \cite{brudnyi--2007} and \cite[Section 7]{brudnyi--2012}. It follows from Theorem~\ref{thm:lee-and-naor-doubling} that if \(A\) is a finite metric space which consists of at most \(n\) points, then any \(1\)-Lipschitz map \(f\colon A \to Y\) admits a 
\[
10^5\cdot \log(n)
\]
Lipschitz extension \(F\colon X \to Y\). A deep theorem of Lee and Naor \cite{lee--naor-2005} improved the asymptotics of this bound. We have the following explicit version of \cite[Theorem 1.10]{lee--naor-2005}.

\begin{theorem}[Lee--Naor -- finite spaces]\label{thm:lee-and-naor}
Let \(X\) be a metric space and \(A\subset X\) a subset consisting of at most \(n\) points. Suppose that \(Y\) is a Banach space. Then every \(1\)-Lipschitz map \(f\colon A \to Y\) admits a
\begin{equation}\label{eq:lee-and-naor-best-estimate}
600 \cdot \frac{\log(n)}{\log(\log(n))}
\end{equation}
Lipschitz extension \(F\colon X \to Y\).
\end{theorem}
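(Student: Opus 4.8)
The plan is to follow the strategy of Lee and Naor, which is based on a random partition scheme combined with the pigeonhole principle applied to scales. First I would reduce to a convenient normalization: scaling the metric on $A$, we may assume the minimal distance in $A$ is $1$ and the diameter is $\Delta$. Since $A$ has at most $n$ points, $\log \Delta$ is essentially controlled by $n$ after a preliminary step (one can first glue together points at exponentially separated scales, so it suffices to handle the case $\Delta \leq n^{O(1)}$, i.e.\ only $O(\log n / \log k)$ relevant dyadic scales once we group them into blocks of size $k$, for a parameter $k$ to be optimized). The heart of the argument is the construction, at each scale $2^j$, of a random partition of $A$ (and more importantly of the ambient space $X$ near $A$) into clusters of diameter $O(2^j)$ such that the probability that two points at distance $t$ are separated is at most $O(t/2^j)\cdot \log(\text{local doubling-type quantity})$; for an $n$-point set the relevant quantity is $n$, giving a separation probability $\lesssim (t/2^j)\log n$.

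Next I would set up the extension operator. Following Lee--Naor, one builds a sequence of maps $F_j \colon X \to Y$ that successively ``resolve'' the scales: roughly, $F_j$ is obtained from $F_{j+1}$ by replacing, on each cluster of the scale-$2^j$ partition that lies far from $A$, the value of $F_{j+1}$ by a weighted average (using a Lipschitz partition of unity subordinate to the clusters) of values of $f$ at nearby points of $A$; because $Y$ is a Banach space these averages make sense and are $1$-Lipschitz in the data. The key telescoping estimate is that $\operatorname{Lip}(F_j - F_{j+1})$ contributes, in expectation, a term proportional to the separation probability at that scale times $\operatorname{Lip}(f)$. Summing the geometric-type series over the $O(\log n/\log k)$ grouped scales and using the stochastic separation bound yields a bound of the shape $O\big(\tfrac{\log n}{\log k} + k \log n \cdot (\text{something})\big)$ — here the interplay is that grouping $k$ consecutive scales reduces the number of terms by a factor $k$ but the per-term cost inside a block grows like $\log n$ only once, not $k$ times, because one uses a single random partition per block rather than per scale. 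Choosing $k \approx \log n$ balances the two contributions and produces the bound $O(\log n/\log\log n)$; carefully tracking the absolute constants through the partition construction, the partition-of-unity Lipschitz bounds, and the summation gives the explicit constant $600$ in \eqref{eq:lee-and-naor-best-estimate}.

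The main obstacle I expect is twofold. First, the random partition at a given scale must be constructed on the ambient space $X$ (not just on $A$), with the separation probability controlled by the \emph{number of points of $A$} in a ball rather than by any doubling constant of $X$ — this is exactly the Lee--Naor innovation (a ``local'' growth bound), and making the constant explicit requires a careful version of the Calinescu--Karloff--Rabani / padded-decomposition argument with every estimate quantified. Second, the telescoping/summation step is delicate because one must simultaneously control the Lipschitz constant of each increment $F_j - F_{j+1}$ \emph{and} ensure the maps converge (or stabilize) as $j \to \pm\infty$ and that $F_j \to f$ on $A$; getting the additive constants to combine into a clean $600$ rather than a larger number needs the optimization of the block size $k$ to be done after, not before, collecting the constants. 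Everything else — the preliminary rescaling, the reduction of the number of scales, the elementary properties of averaging operators into Banach spaces — is routine once the quantitative random partition lemma is in hand.
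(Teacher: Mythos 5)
Your high-level plan is indeed the Lee--Naor multiscale strategy, and the paper follows it too, but several of your concrete mechanisms diverge from what is actually done, and a couple of them would create problems if carried out literally.

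First, the paper does not perform a preliminary reduction to the case $\mathrm{diam}(A)\leq n^{O(1)}$, and it does not ``glue together points at exponentially separated scales.'' Instead, for each point $x$ only $N+2$ of the cutoff functions $\omega_m(x)$ are nonzero (Lemma~\ref{lem:properties-of-cutoff-functions} arranges a plateau of width $N$ in log-scale with two transition indices), and the range of relevant scales is controlled automatically by the telescoping of the ratios $\log\bigl(\#B(a_x,2^m)/\#B(a_x,2^{m-2})\bigr)$ appearing in Lemma~\ref{lem:construction-F_n}: summing these over the active window gives at most $2\log n$ no matter how large $\mathrm{diam}(A)$ is. Your proposed diameter reduction is not only unnecessary but is a genuinely delicate step in its own right (gluing would require another Lipschitz-extension argument), and omitting it is part of what keeps the constants manageable.

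Second, your construction builds $F_j$ iteratively from $F_{j+1}$ by ``resolving'' one scale at a time; the paper instead builds each $F_m$ \emph{independently} from a covering at scale $2^m$ (a single Whitney-type step on $X_m=\{x: d(x,A)\leq 2^m\}$) and combines them by the fixed weighted average $F=\tfrac{1}{N+1}\sum_m \omega_m F_m$. This avoids the convergence/stabilization issues you flag as an obstacle, because there is no iteration to stabilize. Third, the paper works with a \emph{deterministic} version of the random partition: Lemma~\ref{lem:itertive-ball-partitioning} (iterative ball partitioning over all permutations) replaces ``separation probability $\lesssim (t/2^j)\log n$'' with the counting ratio $\#\{i: B(x,D/4)\subset B_i\}/\#\{i: x\in B_i\}\geq \#B(x,D/4)/\#B(x,D)$; this feeds into the Johnson--Lindenstrauss--Schechtman partition-of-unity estimate and gives the explicit factor $120$ in \eqref{eq:pointwise-bound-Lipschitz-constant-F_n}. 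Finally, your parameter $k$ should be identified with $2^N$, not with $N$: the paper balances $\tfrac{240\log n}{N+1}+\tfrac{128\cdot 2^{N+1}}{N+1}$ and takes $N+1\approx\log_2\log n$, which is what produces the $600\log n/\log\log n$ bound; choosing a block size $k\approx\log n$ is consistent with this only after the translation $k\leftrightarrow 2^N$. In short, the skeleton is right, but to reach the explicit $600$ you would need to adopt the paper's cutoff-averaging ansatz and the deterministic ratio lemma rather than the iterative/probabilistic framing you sketch.
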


Lee and Naor's proof uses an elegant multiscale extension argument and relies on stochastic decomposition techniques from theoretical computer science. A rough sketch of their proof strategy can be found in Section~\ref{sec:multiscale}, where we essentially follow this strategy and give a self-contained proof of the theorem. The proof crucially relies on certain padded decompositions of finite metric spaces. We construct these decompositions in Lemma~\ref{lem:itertive-ball-partitioning} using the methods of \cite{calinescu--2004, Fakcharoenphol--2003, Fakcharoenphol--2004}.  Contrary to what is common in the literature, our formulation of the lemma does not use any stochastic language. Moreover, our proof can be viewed as a de-randomized version of the usual proof. However, it is mathematically equivalent to the usual stochastic proof and therefore the whole reformulation is essentially just a matter of linguistics.

Theorem~\ref{thm:lee-and-naor} admits a concise reformulation using the absolute extendability constant of a metric space.

\begin{definition}
A metric space \(X\) is called absolutely Lipschitz extendable if there exists a constant \(L\geq 0\) such that the following holds. For every metric space \(X^e\) containing \(X\) and every Banach space \(Y\), any \(1\)-Lipschitz map \(f\colon X\to Y\) can be extended to an \(L\)-Lipschitz map \(F\colon X^e\to Y\). The smallest such \(L\geq 0\) is denoted by \(\mbox{\ae}(X)\). 
\end{definition}

Hence, the theorem above states that if \(X\) consists of at most \(n\) points, then
\begin{equation}\label{eq:absolute-extendability}
\mbox{\ae}(X) \leq 600 \cdot \frac{\log(n)}{\log(\log(n))}.
\end{equation}
For small \(n\), this estimate can be further improved. It follows directly from \cite[Theorem 1.4]{basso--2022} and the Kadets--Snobar theorem \cite{kadets--1972} that
\[
\mbox{\ae}(X) \leq \sqrt{n}
\]
for any \(n\)-point metric space \(X\). This estimate is stronger than \eqref{eq:absolute-extendability} for any \(n\leq 10^7\). Let us remark here that little is known about the precise values of \(\mbox{\ae}(n)=\sup\{ \mbox{\ae}(X) : \abs{X}=n \}\) for small \(n\). It is conceivable that good bounds of this constants for small \(n\) could shed some light on the general behaviour of the function \(\mbox{\ae}(n)\).

\subsection{Structure of the article} This paper is organized as follows. In Section~\ref{sec:preliminaries} we prove two lemmas relating pointwise Lipschitz constants to (global) Lipschitz constants. We also recall the necessary background from optimal transport theory regarding our use of “barycentric coordinates". Section~\ref{sec:whitney} is devoted to the proof of  Theorem~\ref{thm:Lang-Schlichenmaier-explicit}. The proof of this theorem can be seen as a blueprint for the more elaborate proof of  Theorem~\ref{thm:LS-msot-general-version} in Section~\ref{sec:explicit-lang-schlichenmaier}. In the next section we prove Theorem~\ref{thm:lee-and-naor}, our explicit version of the extension theorem of Lee and Naor. The main purpose of Section~\ref{sec:geometry-of-simplicial-complexes} is to introduce terminology regarding simplicial complexes. Using some of the tools developed there, we also give the straightforward proof of Theorem~\ref{thm:bilipschitz-triangulation}. In Section~\ref{sec:general-whitney} we establish a general extension theorem relating Lipschitz extension constants to parameters of certain coverings, which we call Whitney coverings. Section~\ref{sec:lipschitz-n-connectedness} deals with Lipschitz \(n\)-connected spaces. We discuss alternative definitions there and show in Proposition~\ref{prop:one} that \(\NPC\)-spaces are Lipschitz  \(n\)-connected for every \(n\). Finally, in Section~\ref{sec:explicit-lang-schlichenmaier}, we prove Theorem~\ref{thm:LS-msot-general-version} by the use of Theorem~\ref{thm:basso-theorem}.

\section{Preliminaries}\label{sec:preliminaries}

\subsection{Basic metric notions}
Let \(X=(X, d)\) be a metric space. For any \(x\in X\) and \(r>0\), we use \(B(x, r)\) to denote the closed ball in \(X\) with center \(x\) and radius \(r\). Given two subsets \(A\), \(B\subset X\), we write
\[
d(A, B)=\inf\{ d(a, b) : a\in A \text{ and } b\in B\}
\]
to denote the infimal distance between \(A\) and \(B\). To ease notation, we write \(d(x, B)\) for \(d(\{x\}, B)\). We use
\[
N_r(B)=\{ x\in X : d(x, B) <r \}
\]
to denote the open \(r\)-neighborhood of \(B\) in \(X\). A covering of \(X\) is by definition a collection \((B_i)_{i\in I}\) of subsets of \(X\) such that \(\bigcup_{i\in I} B_i=X\). For all coverings constructed in this article, we tacitly assume that they consist only of non-empty subsets. This does not cause any problems because whenever \(X\) is non-empty and the empty set is removed from a covering of \(X\), the resulting collection is still a covering.  

\subsection{Pointwise Lipschitz constants}
For any map \(f\colon X\to Y\) between metric spaces, we denote by
\[
\Lip f(x)=\limsup_{x'\to x} \frac{d(f(x), f(x'))}{d(x,x')}
\]
the (upper) pointwise Lipschitz constant of \(f\) at \(x\). The following lemma shows that in a quasiconvex domain space,
one can prove Lipschitz continuity by establishing an upper bound on the pointwise Lipschitz constants. Recall that \(X\) is \(c\)-quasiconvex if any two points in \(X\) can be joined by a continuous path of length at most \(c\) times their distance.

\begin{lemma}\label{lem:folklore}
Let \(f\colon X \to Y\) be a map between metric spaces. If \(X\) is \(c\)-quasiconvex and \(\Lip f(x) \leq L\) for every \(x\in X\), then \(f\) is \(cL\)-Lipschitz. 
\end{lemma}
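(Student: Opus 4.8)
The plan is to reduce the global estimate to a one-dimensional statement along a nearly-geodesic path and then run a standard continuation (connectedness) argument on an interval. First I would record two elementary preliminaries. The hypothesis $\Lip f(x)\le L$ forces $f$ to be continuous at every $x\in X$, since for $x'$ close enough to $x$ one has $d(f(x),f(x'))\le (L+1)\,d(x,x')$. Second, if $\bar\gamma\colon[0,\ell]\to X$ is $1$-Lipschitz, then $g=f\circ\bar\gamma\colon[0,\ell]\to Y$ satisfies $\Lip g(t)\le \Lip f(\bar\gamma(t))\cdot\Lip\bar\gamma(t)\le L$ by the chain rule for upper pointwise Lipschitz constants; this is valid because $\Lip\bar\gamma(t)\le 1<\infty$, and it is checked by factoring the relevant difference quotient (handling separately the case $\bar\gamma(t)=\bar\gamma(t')$, where the numerator vanishes) and using that the $\limsup$ of a product of bounded nonnegative quantities is at most the product of the $\limsup$s.

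Next, fix $x,y\in X$. By $c$-quasiconvexity there is a continuous path from $x$ to $y$ of length $\ell\le c\,d(x,y)$, hence rectifiable; reparametrizing it by arc length yields a $1$-Lipschitz curve $\bar\gamma\colon[0,\ell]\to X$ with $\bar\gamma(0)=x$ and $\bar\gamma(\ell)=y$. Put $g=f\circ\bar\gamma$, which by the above is continuous with $\Lip g(t)\le L$ for all $t$. It then suffices to show $d(g(0),g(\ell))\le L\ell$, since this gives $d(f(x),f(y))\le L\ell\le cL\,d(x,y)$, and $x,y$ were arbitrary.

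For the one-dimensional claim, fix $\varepsilon>0$ and set $A=\{t\in[0,\ell] : d(g(0),g(t))\le (L+\varepsilon)\,t\}$. Then $0\in A$, and $A$ is closed by continuity of $g$, so $t^*:=\sup A$ lies in $A$. If $t^*<\ell$, then since $\Lip g(t^*)\le L<L+\varepsilon$ there is $\delta\in(0,\ell-t^*)$ with $d(g(t^*),g(t))\le (L+\varepsilon)(t-t^*)$ for all $t\in[t^*,t^*+\delta)$; combining this with $t^*\in A$ through the triangle inequality yields $d(g(0),g(t))\le (L+\varepsilon)\,t$ for all such $t$, contradicting the maximality of $t^*$. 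Hence $\ell\in A$, that is $d(g(0),g(\ell))\le(L+\varepsilon)\ell$, and letting $\varepsilon\to 0$ completes the proof.

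The only genuine obstacle is the continuation step: one cannot directly integrate the pointwise bound, and the slack $\varepsilon>0$ is precisely what converts the infinitesimal inequality $\Lip g(t^*)\le L$ into an honest local estimate that keeps the set $A$ "open to the right" at its supremum. Everything else is bookkeeping — the continuity of $f$, the chain rule for pointwise Lipschitz constants, and the arc-length reparametrization of a rectifiable curve — and an alternative that sidesteps the reparametrization is to run the same supremum argument with $t\mapsto d(f(x),f(\gamma(t)))$ against the (continuous, nondecreasing) length function $t\mapsto\operatorname{length}(\gamma|_{[0,t]})$.
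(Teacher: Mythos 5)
Your proof is correct. The paper does not supply its own argument for this lemma; it declares the result folklore and cites \cite[Lemma~2.4]{basso--wenger--young}, so there is no in-text proof to compare against. The route you take --- arc-length reparametrization of a quasigeodesic to reduce to a map $g=f\circ\bar\gamma$ on an interval, followed by an $\varepsilon$-slackened continuation argument using closedness of $A=\{t\in[0,\ell] : d(g(0),g(t))\le(L+\varepsilon)\,t\}$ and the bound $\Lip g(t^*)\le L$ at $t^*=\sup A$ --- is precisely the standard argument that the citation stands in for. You also correctly flag and handle the two points that actually require care: the chain-rule inequality $\Lip(f\circ\bar\gamma)(t)\le\Lip f(\bar\gamma(t))\cdot\Lip\bar\gamma(t)$ for upper pointwise Lipschitz constants (where one must split off the degenerate case $\bar\gamma(t')=\bar\gamma(t)$ and observe that both $\limsup$s are finite before multiplying), and the use of the slack $\varepsilon>0$ to convert the infinitesimal bound at $t^*$ into an honest one-sided local estimate that pushes $\sup A$ past $t^*$.
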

\begin{proof}
This is a folklore result, see e.g.~\cite[Lemma~2.4]{basso--wenger--young} for a proof.
\end{proof}

The object of study of this article are extensions \(F\colon X \to Y\) of a given Lipschitz map \(f\colon A \to Y\). Since we consider arbitrary domain spaces, by Kuratowski's embedding there is no loss in generality to assume that \(X\) is a Banach space (and thus in particular \(1\)-quasiconvex). Hence, by the lemma above, to check whether \(F\) is Lipschitz it suffices to bound the pointwise Lipschitz constant of \(F\). Our next lemma simplifies this even further and shows that for continuous \(F\), it is sufficient to only bound the pointwise Lipschitz constants of points in \(X\setminus A\). This lemma is very useful and is applied in all the proofs of our main theorems.

\begin{lemma}\label{lem:great-simplification}
Let \(X\) be a Banach space and \(f\colon A \to Y\) a \(1\)-Lipschitz map defined on a closed subset \(A\subset X\) to a metric space \(Y\). Suppose \(F\colon X \to Y\) is an extension of \(f\) that is continuous at every point of \(A\) and there exists \(L\geq 1\) such that for every \(x\in X \setminus A\),
\begin{equation}\label{eq:bound-on-x}
\Lip F(x)\leq L.
\end{equation}
Then \(F\colon X \to Y\) is \(L\)-Lipschitz.
\end{lemma}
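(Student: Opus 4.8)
The plan is to reduce the global Lipschitz bound on $F$ to the pointwise bound \eqref{eq:bound-on-x} via Lemma~\ref{lem:folklore}, which applies because $X$ is a Banach space and hence $1$-quasiconvex. The only gap is that \eqref{eq:bound-on-x} is assumed only for $x \in X \setminus A$, so the first task is to show $\Lip F(x) \le L$ also holds for $x \in A$. Once this is established for every $x \in X$, Lemma~\ref{lem:folklore} immediately yields that $F$ is $L$-Lipschitz.

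\emph{First I would} fix $a \in A$ and estimate $d(F(a), F(x'))$ for $x'$ close to $a$. The natural idea is to interpolate through a nearby point of $A$. Given $x' \in X$, pick (or nearly pick) a point $a' \in A$ with $d(x', a')$ close to $d(x', A)$; note $d(x', A) \le d(x', a)$, so $d(a', x') \le d(a, x')$ up to an arbitrarily small error, and therefore $d(a, a') \le d(a, x') + d(a', x') \le 2\,d(a,x') + \epsilon$. \emph{Then I would} split
\[
d(F(a), F(x')) \le d(F(a), F(a')) + d(F(a'), F(x')) = d(f(a), f(a')) + d(F(a'), F(x')).
\]
The first term is $\le d(a, a') \le 2\,d(a, x') + \epsilon$ since $f$ is $1$-Lipschitz and $L \ge 1$. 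The issue is the second term: if $x' \notin A$ we would like to use the pointwise bound at points of $X \setminus A$, but $a'$ lies in $A$, so \eqref{eq:bound-on-x} does not apply there directly, and the segment from $a'$ to $x'$ may pass through $A$.

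\textbf{The main obstacle} is controlling $d(F(a'), F(x'))$ when both or neither of $a', x'$ lie in $X \setminus A$. The cleanest route is: if $x' \in A$ then $d(F(a'), F(x')) = d(f(a'), f(x')) \le d(a', x')$ directly; if $x' \in X \setminus A$, then I would apply the already-proven fact that $F$ restricted to a neighborhood works, or better, argue that on the open set $X \setminus A$ the pointwise bound $\Lip F \le L$ together with $1$-quasiconvexity of $X \setminus A$ \emph{locally} (any Banach space is locally quasiconvex on an open set with constant arbitrarily close to $1$ on small balls — indeed the straight segment from $a'$ to $x'$ need not stay in $X \setminus A$, but one can perturb) gives a Lipschitz bound near $x'$. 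In fact the slick argument avoids $a'$ altogether: use continuity of $F$ at $a$. Pick $x' \to a$; if $x' \in A$, then $d(F(a),F(x')) \le d(a,x')$ trivially. If $x' \in X \setminus A$, let $a'$ realize $d(x', A)$ up to $\epsilon$; then since $\Lip F(y) \le L$ for all $y$ on any small segment avoiding $A$, and such a segment from $x'$ to a point at distance $d(x',A)$ from $A$ can be taken inside $X \setminus A$ (the open ball $B(x', d(x',A))$ misses $A$, so the segment $[x', z]$ for $z$ in that ball avoids $A$), one bounds $d(F(x'), F(z)) \le L\,d(x', z)$ by integrating $\Lip F$ along that segment; letting $z \to a'$ and using continuity of $F$ at $a'$... but $a' \in A$, so continuity of $F$ there is exactly the hypothesis. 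Assembling: $d(F(a), F(x')) \le d(F(a), F(a')) + d(F(a'), F(x')) \le d(a,a') + L\,d(a', x') + (\text{error}) \le 2L\,d(a,x') + L\,d(a,x') + \epsilon$, giving $\Lip F(a) \le 3L$; to recover the sharp constant $L$ one must be more careful with the geometry, choosing the interpolation point on the segment $[a, x']$ itself at the last exit time from $A$. \emph{I expect} the sharp-constant bookkeeping — showing the constant is exactly $L$ and not merely $O(L)$ — to be the only real subtlety, and it is handled by taking, along the segment $[a,x']$, the point $a^* \in A$ farthest along toward $x'$ (which exists by closedness of $A$), so that the sub-segment $(a^*, x']$ lies entirely in $X \setminus A$; then $d(F(a^*), F(x')) \le L\,d(a^*, x')$ by the pointwise bound, $d(F(a), F(a^*)) \le d(a, a^*)$ since both are in $A$, and $d(a,a^*) + d(a^*,x') = d(a,x')$ since $a^*$ lies on the segment, yielding $d(F(a), F(x')) \le L\,d(a,x')$ after using $L \ge 1$ — and continuity of $F$ at $a$ is what lets the last-exit-point argument degenerate gracefully as $x' \to a$.
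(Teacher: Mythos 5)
Your final argument — reducing to showing $\Lip F(a)\le L$ for $a\in A$, taking the last exit point $a^*$ of $A$ along the segment $[a,x']$, applying the pointwise Lipschitz bound (via Lemma~\ref{lem:folklore}) on the open remainder $(a^*,x']$, and invoking continuity of $F$ to pass to the limit at $a^*$ — is exactly the paper's proof, and it is correct. The only small slip is at the very end: the continuity of $F$ you need there is at $a^*$, not at $a$, and the preceding detour through the $3L$ bound via a nearest point $a'$ is ultimately unnecessary once you switch to the on-segment exit point.
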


\begin{proof}
In view of  Lemma~\ref{lem:folklore} it suffices to show that
\begin{equation}\label{eq:bound-on-a}
\Lip F(a) \leq L
\end{equation}
for all \(a\in A\). Indeed, if \eqref{eq:bound-on-a} holds then the assumptions of Lemma~\ref{lem:folklore} are satisfied as Banach spaces are \(1\)-quasiconvex and we have \(\Lip F(x) \leq L\) for all \(x\in X\) because of \eqref{eq:bound-on-x} and \eqref{eq:bound-on-a}. Let \(a\in A\) and \(x\in X\setminus A\), and consider the geodesic segment 
\[
x_t= (1-t) x+t a
\]
for \(t\in [0,1]\). Since \(A\) is closed and \(x\notin A\), there exists a smallest real number \(s\in (0,1]\) such that \(x_s\in A\). By \eqref{eq:bound-on-x} we have \(\Lip F(x_t)\leq L\) for every \(t\in [0, s)\). Thus, it follows from Lemma~\ref{lem:folklore}, that \(d(F(x), F(x_t))\leq L\cdot d(x, x_t)\) for any such \(t\). Hence, 
\[
d(F(x), F(a))\leq L\cdot d(x, x_t)+d(F(x_t), F(x_s))+d(x_s, a)
\]
and so 
\[
d(F(x), F(a)) \leq L\cdot d(x, a).
\]
since \(F\) is continuous at \(x_s\) and \(t<s\) can be chosen arbitrarily close to \(s\). This yields \eqref{eq:bound-on-a} as \(L\geq 1\).
\end{proof}

\subsection{Good coverings}
Given a covering of a metric space one might ask how often a given point is contained well within the interior of the members of the covering. For a covering \((B_i)_{i\in I}\), a fixed point \(x\in X\), and \(D>0\), the ratio
\[
\delta_x(D)=\frac{\#\{ i\in I : B(x, D)\subset B_i\}}{\#\{ i\in I : x\in B_i\}}
\]
tells us the percentage of the sets from our collection that contain \(x\) in a way that it is at least distance \(D\) away from their boundaries. For our Lipschitz extension theorems, it turns out to be helpful to work with coverings where this ratio is as large as possible. We informally refer to such coverings as good coverings. For example, spaces that satisfy \(\Nagata(n, c)\) admit good coverings. More precisely, such spaces \(X\) admit for any \(D>0\) a \(2c D\)-bounded covering such that \(\delta_x(D)\geq \frac{1}{n+1}\) for every \(x\in X\). This can easily be seen by looking at the open \((D/2)\)-neighborhoods of the members of the Nagata cover at scale \(D\). This simple observation will be helpful in the proof of Theorem~\ref{thm:Lang-Schlichenmaier-explicit} in Section~\ref{sec:whitney}.

The proof of the Lee--Naor extension theorem in Section~\ref{sec:multiscale} requires the following more refined estimate on \(\delta_x(D)\) for finite metric spaces.

\begin{lemma}\label{lem:itertive-ball-partitioning}
Let \(X\) be a finite metric space and \(D>0\). Then \(X\) admits a finite, \(D\)-bounded covering \((B_i)_{i\in I}\) such that
\begin{equation}\label{eq:padded-1}
 \frac{\#\{ i\in I : B(x, \frac{D}{4})\subset B_i\}}{\#\{ i\in I : x\in B_i\}} \geq \frac{\# B(x, \frac{D}{4})}{\# B(x, D)}
\end{equation}
for all \(x\in X\).
\end{lemma}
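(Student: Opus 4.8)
The plan is to build the covering as the union of a finite family of partitions of \(X\), one for every ordering of its points; this is a de‑randomized version of the single‑scale ball‑partitioning scheme of Calinescu--Karloff--Rabani and Fakcharoenphol--Rao--Talwar, run at the fixed radius \(D/2\). No probabilistic language is needed: the family is explicit and \eqref{eq:padded-1} is verified by a direct count. Concretely, I would enumerate \(X=\{x_1,\dots,x_N\}\) and, for each permutation \(\pi\) of \(\{1,\dots,N\}\), order the points as \(x_{\pi(1)},\dots,x_{\pi(N)}\) and define \(a_\pi\colon X\to X\) by letting \(a_\pi(y)\) be the first point in this order lying within distance \(D/2\) of \(y\) (well defined, since \(y\) itself qualifies). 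Let \(\mathcal{P}_\pi\) be the partition of \(X\) into the non‑empty fibres of \(a_\pi\); every fibre is contained in a ball of radius \(D/2\), hence has diameter at most \(D\). The covering is then \((B_i)_{i\in I}\), where \(I\) is the disjoint union over all \(\pi\) of the collections \(\mathcal{P}_\pi\) and \(B_{(\pi,P)}=P\); it is finite and \(D\)-bounded by construction.

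Next I would fix \(x\in X\) and evaluate the two counts in \eqref{eq:padded-1}. The denominator is immediate: each \(\mathcal{P}_\pi\) is a partition, so exactly one of its members contains \(x\), whence \(\#\{i\in I: x\in B_i\}=N!\). For the numerator, observe that for a fixed \(\pi\) at most one member of \(\mathcal{P}_\pi\) can contain the ball \(B(x,\tfrac D4)\), namely the member \(P_\pi(x)\) containing \(x\); hence \(\#\{i\in I: B(x,\tfrac D4)\subset B_i\}=\#\{\pi: B(x,\tfrac D4)\subset P_\pi(x)\}\), and it suffices to bound this quantity below by \(N!\cdot \#B(x,\tfrac D4)/\#B(x,D)\).

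The crux is a combinatorial sufficient condition for \(B(x,\tfrac D4)\subset P_\pi(x)\). Writing \(S=B(x,\tfrac{3D}{4})\) and \(T=B(x,\tfrac D4)\subset S\), I would show: if the first point \(w^{*}\) of \(S\) in the \(\pi\)-order lies in \(T\), then \(B(x,\tfrac D4)\subset P_\pi(x)\). Indeed \(d(w^{*},x)\le \tfrac D4\), so \(d(w^{*},y)\le \tfrac D2\) for every \(y\in B(x,\tfrac D4)\); and if some point preceding \(w^{*}\) in the \(\pi\)-order were within \(\tfrac D2\) of such a \(y\), it would lie in \(B(x,\tfrac{3D}{4})=S\), contradicting the minimality of \(w^{*}\). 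Hence \(a_\pi(y)=w^{*}\) for all \(y\in B(x,\tfrac D4)\), so the whole ball lies in the single fibre \(P_\pi(x)\). Finally, as \(\pi\) ranges over all permutations the first point of the fixed set \(S\) in the \(\pi\)-order attains each value in \(S\) equally often, so \(\#\{\pi: w^{*}\in T\}=N!\cdot \#T/\#S=N!\cdot \#B(x,\tfrac D4)/\#B(x,\tfrac{3D}{4})\ge N!\cdot \#B(x,\tfrac D4)/\#B(x,D)\), using \(B(x,\tfrac{3D}{4})\subset B(x,D)\). Dividing by \(N!\) yields \eqref{eq:padded-1}.

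I expect the only genuine obstacle to be the bookkeeping around the radius: the value \(D/2\) is pinned down by three competing requirements — the fibres must be \(D\)-bounded, a center within \(\tfrac D4\) of \(x\) must capture all of \(B(x,\tfrac D4)\), and every center ``relevant'' to \(B(x,\tfrac D4)\) must lie inside \(B(x,D)\) — so I would check these inequalities carefully. (Alternatively one may randomize \(\pi\), and even the radius, and read \eqref{eq:padded-1} as the probability that \(\mathcal{P}_\pi\) does not separate \(B(x,\tfrac D4)\); this is mathematically equivalent and gains nothing here.)
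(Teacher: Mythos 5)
Your proof is correct and follows essentially the same route as the paper: the same iterative ball-partitioning over all permutations, with \eqref{eq:padded-1} obtained by counting the permutations in which the first point of a guard ball around \(x\) falls inside \(B(x, \tfrac{D}{4})\) (your fibre description of \(a_\pi\) is exactly the paper's recursive carving \(B_\pi^k = B(x_{\pi(k)}, \cdot)\setminus(B_\pi^1\cup\dots\cup B_\pi^{k-1})\)). If anything, your choice of carving radius \(D/2\) directly produces the \(D\)-bounded covering claimed in the statement, whereas the paper's written proof carves with radius \(D\) (yielding a \(2D\)-bounded covering and the ratio \(\# B(x,\tfrac{D}{2})/\# B(x,2D)\)) and implicitly relies on a rescaling.
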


\begin{proof}
Our proof relies on an iterative ball partitioning method, which is highly influential and extensively used by many authors in the field of theoretical computer science. Notable references include \cite{bartal1996probabilistic, calinescu--2004, Fakcharoenphol--2003, Fakcharoenphol--2004, linial1991decomposing}. 

Fix an enumeration \(X=\{x_1, \dots, x_n\}\) and let \(S_n\) denote the symmetric group of order \(n\). For each \(\pi\in S_n\) we define \(B_\pi^1, \dots, B_\pi^n\subset X\) by the following recursive rule: \(B_\pi^1=B(x_{\pi(1)}, D)\) and \(B_\pi^k=B(x_{\pi(k)}, D)\setminus B_\pi^1\cup \dots \cup B_\pi^{k-1}\) for all \(k>1\). Clearly, each \(B_\pi^k\) is \(2D\)-bounded and \(\{ B_\pi^1, \dots, B_\pi^n\}\) is a partition of \(X\). Thus, \(\mathcal{B}=(B_\pi^k)\) is a \(2D\)-bounded covering of \(X\) such that each \(x\in X\) is contained in exactly \(n!\) members of \(\mathcal{B}\). 

In what follows, we show \eqref{eq:padded-1}. Fix \(x\in X\). Without loss of generality we may suppose that \(x=x_1\) and \(d(x, x_\ell) \leq d(x, x_k)\) whenever \(\ell <k\).  Clearly,
\begin{equation}\label{eq:first-ineq}
\delta_x\bigl(\tfrac{D}{2} \bigr)\geq \frac{1}{n!} \sum_{k=1}^{\# B(x, \frac{D}{2})} \# A_k    
\end{equation}
where \(A_k= \bigl\{ \pi \in S_n : B(x, \frac{D}{2}) \subset B_\pi^k \bigr\}\). For any \(k\) with \(d(x, x_k) \leq \frac{D}{2}\) one has that \(\pi \in A_k\) whenever \(\pi(k) \leq \pi (\ell)\) for all \(\ell=1, \dots, \# B(x, 2D)\). In particular, letting \(m= \# B(x, 2D)\), we find that
\begin{equation}\label{eq:second-ineq}
\# A_k \geq \binom{n}{m} (m-1)!\, (n-m)!=\frac{n!}{m}
\end{equation}
for all \(k=1, \dots, \# B(x, \frac{D}{2})\). Therefore, by combining \eqref{eq:first-ineq} with \eqref{eq:second-ineq} we arrive at \eqref{eq:padded-1}, as desired. 
\end{proof}

\subsection{Spaces of generalized non-positive curvature}\label{sec:spaces-of-generalized-non-po-curvature}  Let \(X\) be a metric space and denote by \(\mathcal{B}(X)\) its Borel \(\sigma\)-algebra. A signed measure \(\lambda \colon \mathcal{B}(X) \to \R\) is said to be a real Radon measure if its total variation \(\mu=\abs{\lambda}\) is an inner regular measure, that is, \(\mu(B)\) is equal to the supremum \(\mu(K)\) over all compact subset \(K\subset B\). We denote by \(\mathcal{M}(X)\) the vector space of all real Radon measures on \(X\). Let \(\mathcal{M}_s(X)\) be the linear subspace of \(\mathcal{M}(X)\) generated by all measures of the form \(\delta_x-\delta_y\) for \(x\), \(y\in X\). The following expression 
\[
\norm{\lambda}_{\KR}=\sup\Big\{ \int_X f(x)\, \lambda(dx) : f\in \Lip_1(X, \R) \Big\}
\]
clearly defines a norm on \(\mathcal{M}_s(X)\). This norm is called the Kantorovich-Rubinstein norm and the completion of \(\mathcal{M}_s(X)\) with respect to \(\norm{\, \cdot \,}_{\KR}\) is called the Lipschitz free space over \(X\) and is denoted by \(\mathcal{F}(X)\). If \((X, p)\) is a pointed metric space, then \(\delta\colon X \to \mathcal{F}(X)\) defined by \(\delta(x)=\delta_x-\delta_p\) is an isometric embedding. For Banach spaces, this map is not linear, but it admits a bounded \emph{linear} inverse \(\beta\colon \mathcal{F}(X) \to X\) which satisfies \(\beta(\lambda)=\int x \,d\lambda\) for all \(\lambda\in \mathcal{M}_s(X)\). This barycenter map \(\beta\) is a key notion in the theory of Lipschitz free spaces in Banach space theory \cite{kalton--2003}.

In the following we discuss the existence of such barycenter maps  in the metric setting. Let \(\mathcal{P}_1(X)\) denote the set of all Radon probability measures \(\mu\colon \mathcal{B}(X)\to [0,1]\) on \(X\) such that 
\begin{equation}\label{eq:finite-first-moment}
\int_X d(x, x_0) \, \mu(dx) < \infty
\end{equation}
for some \(x_0\in X\) (and thus also for any other point in \(X\)). 
Notice that if \(\mu\), \(\nu\in \mathcal{P}_1(X)\), then \(\norm{\mu-\nu}_{\KR}<\infty\) and thus
\begin{equation}\label{eq:kr-duality}
W_1(\mu, \nu)=\norm{\mu-\nu}_{\KR}
\end{equation}
defines a metric on \(\mathcal{P}_1(X)\). This metric is mostly referred to as \(1\)-Wasserstein distance. However, the history of this distance (see, e.g.~\cite[pp. 106-108]{villani--2009}) shows that a more neutral name would be more appropriate, as it was repeatedly (re-)discovered by many different authors. Throughout this article, \(\mathcal{P}_1(X)\) will always be endowed with \(W_1\). Usually, the \(1\)-Wasserstein distance is defined in terms of optimal couplings of probability measures and \eqref{eq:kr-duality} is then a consequence of the Kantorovich--Rubinstein duality theorem. However, since we will have no use of this particular viewpoint we do not discuss this further. For more information on the Kantorovich-Rubinstein duality theorem, we refer the reader to \cite{edwards--2011}. The following definition is due to Sturm \cite[Remark 6.4]{sturm--2003}.

\begin{definition}
A map \(\beta\colon \mathcal{P}_1(X)\to X\) is called a barycenter map if it satisfies \(\beta(\delta_x)=x\) for all \(x\in X\) and is also 1-Lipschitz.
\end{definition}

It is easy to check that \(\delta\colon X \to \mathcal{P}_1(X)\) defined by \(x\mapsto \delta_x\) is an isometric embedding. Thus, the existence of a barycenter map \(\beta\colon \mathcal{P}_1(X) \to X\) is equivalent to saying that \(\delta\) admits a \(1\)-Lipschitz continuous left inverse. The following result is the cumulation of the works of Es-Sahib and Heinich \cite{es-sahib-1999}, Navas \cite{navas--2013}, and Descombes \cite{descombes--2016}.

\begin{theorem}\label{thm:existence-of-barycenter-map}
Let \(X\) be a complete metric space. Then the following statements are equivalent:
\begin{enumerate}[itemsep=0.75em]
\item \(X\) admits a barycenter map \(\beta\colon \mathcal{P}_1(X)\to X\);
\item \(X\) is a space of generalized non-positive curvature.
\end{enumerate}
\end{theorem}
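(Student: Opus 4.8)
The plan is to prove the two implications separately, with the direction (1)$\Rightarrow$(2) being the short one and (2)$\Rightarrow$(1) carrying the real content.

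For (1)$\Rightarrow$(2), suppose $\beta\colon\mathcal P_1(X)\to X$ is a barycenter map. Define $m\colon X\times X\to X$ by $m(x,y)=\beta\bigl(\tfrac12\delta_x+\tfrac12\delta_y\bigr)$. Symmetry of $m$ is immediate from the symmetry of the measure $\tfrac12\delta_x+\tfrac12\delta_y$, and $m(y,y)=\beta(\delta_y)=y$. For the contraction estimate \eqref{eq:conical}, note that since $\beta$ is $1$-Lipschitz with respect to $W_1$,
\[
d\bigl(m(x,y),m(x,z)\bigr)\le W_1\Bigl(\tfrac12\delta_x+\tfrac12\delta_y,\ \tfrac12\delta_x+\tfrac12\delta_z\Bigr)
= \tfrac12\,W_1(\delta_y,\delta_z)=\tfrac12\,d(y,z),
\]
where the middle equality uses that the common mass $\tfrac12\delta_x$ can be left in place by the coupling realizing the optimal transport (equivalently, one checks directly on the Kantorovich--Rubinstein side that $\norm{\tfrac12(\delta_y-\delta_z)}_{\KR}=\tfrac12\norm{\delta_y-\delta_z}_{\KR}$). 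Hence $X$ is a $\NPC$ space.

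For (2)$\Rightarrow$(1), assume $X$ is complete and admits $m\colon X\times X\to X$ as in Definition~\ref{def:generalized-non-positive-curvature}. The strategy, following Es-Sahib--Heinich, Navas, and Descombes, is: first build a barycenter for finitely supported (rational-weight, then arbitrary-weight) probability measures by iterating $m$, then extend by density and completeness. Concretely, for a uniform measure $\tfrac1N\sum_{i=1}^N\delta_{x_i}$ one forms the sequence obtained by repeatedly replacing the $N$-tuple $(x_1,\dots,x_N)$ by the tuple of pairwise midpoints in a cyclic pattern (or the Es-Sahib--Heinich symmetrization scheme); the $\tfrac12$-contraction in \eqref{eq:conical} forces this process to be a Cauchy iteration, so it converges, and one argues the limit is independent of the ordering. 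This yields $\beta$ on uniform empirical measures; passing to rational weights by repetition of atoms, and then to all of $\mathcal P_1(X)$ by a $W_1$-density argument together with the $1$-Lipschitz bound (which propagates from the finite case), completes the construction. Finally $\beta(\delta_x)=x$ holds trivially since $m(x,x)=x$.

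The main obstacle is the (2)$\Rightarrow$(1) direction, specifically establishing that the iterated-midpoint construction converges to a well-defined point and that the resulting map is genuinely $1$-Lipschitz in $W_1$ on finitely supported measures (uniformity of the convergence rate and order-independence are the delicate points). This is precisely the combined content of \cite{es-sahib-1999, navas--2013, descombes--2016}, so in the paper I would cite those works for this step rather than reproduce the argument, and give only the short self-contained proof of (1)$\Rightarrow$(2) together with the reduction indicating why the three cited constructions suffice.
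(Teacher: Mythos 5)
The paper does not give a proof of this theorem at all: it simply defers to \cite[Section 2]{basso2020extending} and \cite[Theorem~2.5]{descombes--2016} (with the observation that the result is the cumulation of \cite{es-sahib-1999, navas--2013, descombes--2016}). Your proposal matches this for the hard direction (2)$\Rightarrow$(1), which is the right call, and your short self-contained proof of (1)$\Rightarrow$(2) is correct --- the identity $W_1\bigl(\tfrac12\delta_x+\tfrac12\delta_y,\ \tfrac12\delta_x+\tfrac12\delta_z\bigr)=\tfrac12\,d(y,z)$ follows immediately from \eqref{eq:kr-duality} and homogeneity of $\norm{\,\cdot\,}_{\KR}$ --- so you have in fact done slightly more than the paper, but by essentially the same route.
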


We refer to \cite[Section 2]{basso2020extending} for a justification of how this result follows directly from \cite[Theorem 2.5]{descombes--2016}. See also \cite[Section 3.2]{basso--2018-neu}. Other barycentric constructions can be found in \cite{kirk--2014, espinola--2018}. For our proofs it would actually suffice to just have a barycenter map \(\beta\colon \mathcal{P}_f(X) \to X\) defined on the subset \(\mathcal{P}_f(X)\subset \mathcal{P}_1(X)\) of all \(\mu\in \mathcal{P}_1(X)\) whose support is finite. Distances between such measures can be estimated as follows.

\begin{lemma}\label{lem:barycentric-properties}
Assume that \(x_1, \ldots, x_n\in X\) are (not necessarily distinct) points of a metric space \(X\). Then for all \(n\)-tuples \((\alpha_1, \ldots, \alpha_n)\) and \((\beta_1, \ldots, \beta_n)\) in the standard \((n-1)\)-simplex \(\Delta^{n-1}\subset \R^{n}\),  we have
\begin{equation}\label{eq:espinola}
W_1(\sum_{i=1}^n \alpha_i \delta_{x_i}, \sum_{i=1}^n \beta_i \delta_{x_i}) \leq \frac{D}{2} \cdot \sum_{i=1}^n \abs{\alpha_i-\beta_i},
\end{equation}
where \(D=\max\{ d(x_i, x_j) : i, j=1, \ldots, n\}\).
\end{lemma}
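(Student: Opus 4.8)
The plan is to work entirely with the dual description of \(W_1\) furnished by \eqref{eq:kr-duality}. Writing \(\gamma_i=\alpha_i-\beta_i\), the difference \(\sum_{i=1}^n\alpha_i\delta_{x_i}-\sum_{i=1}^n\beta_i\delta_{x_i}\) equals \(\sum_{i=1}^n\gamma_i\delta_{x_i}\in\mathcal{M}_s(X)\), so by the definition of \(\norm{\,\cdot\,}_{\KR}\) the left-hand side of \eqref{eq:espinola} equals \(\sup_{f}\sum_{i=1}^n\gamma_i f(x_i)\), the supremum being taken over all \(f\in\Lip_1(X,\R)\). Since \((\alpha_i)\) and \((\beta_i)\) both lie in \(\Delta^{n-1}\), the coefficients satisfy \(\sum_{i=1}^n\gamma_i=0\), and exploiting this cancellation is the heart of the matter. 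It therefore suffices to fix \(f\in\Lip_1(X,\R)\) and prove \(\sum_{i=1}^n\gamma_i f(x_i)\le\frac D2\sum_{i=1}^n\abs{\gamma_i}\).

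The key step is to rewrite \(\sum_i\gamma_i f(x_i)\) as a nonnegative combination of the differences \(f(x_i)-f(x_j)\). I would split the indices into \(P=\{i:\gamma_i>0\}\) and \(N=\{j:\gamma_j<0\}\) and set \(s=\sum_{i\in P}\gamma_i\). The vanishing of \(\sum_i\gamma_i\) gives \(s=\sum_{j\in N}\abs{\gamma_j}=\frac12\sum_{i=1}^n\abs{\gamma_i}\); if \(s=0\) there is nothing to prove, so assume \(s>0\). Then, inserting the factors \(\sum_{j\in N}\abs{\gamma_j}/s=1\) and \(\sum_{i\in P}\gamma_i/s=1\), one expands
\[
\sum_{i=1}^n\gamma_i f(x_i)=\sum_{i\in P}\gamma_i f(x_i)-\sum_{j\in N}\abs{\gamma_j}f(x_j)=\frac1s\sum_{i\in P}\sum_{j\in N}\gamma_i\abs{\gamma_j}\bigl(f(x_i)-f(x_j)\bigr).
\]
(One could phrase the same manipulation by exhibiting an explicit transport plan between the two measures whose off-diagonal mass is carried by the pairs \((x_i,x_j)\) with \(i\in P\) and \(j\in N\), but since the paper does not develop the coupling viewpoint, the dual computation above is more convenient.)

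To conclude, since \(f\) is \(1\)-Lipschitz each summand satisfies \(f(x_i)-f(x_j)\le d(x_i,x_j)\le D\), while \(\sum_{i\in P}\sum_{j\in N}\gamma_i\abs{\gamma_j}=\bigl(\sum_{i\in P}\gamma_i\bigr)\bigl(\sum_{j\in N}\abs{\gamma_j}\bigr)=s^2\). Hence the last expression is at most \(\frac1s\cdot D\cdot s^2=Ds=\frac D2\sum_{i=1}^n\abs{\gamma_i}\), which is \eqref{eq:espinola}. I do not anticipate a genuine obstacle here: the only point needing care is the bipartite rewriting, which rests on \(\sum_i\gamma_i=0\), and that same identity is precisely what yields the favourable factor \(\frac12\) in the statement, via \(s=\frac12\sum_i\abs{\gamma_i}\).
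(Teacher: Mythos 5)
Your proof is correct, and it takes a genuinely different route from the paper's. The paper argues on the primal (transport) side: it first reduces to rational weights, rewrites both measures as uniform sums of \(N\) equally weighted point masses, invokes the identity \(W_1(\mu,\nu)=\min_{\pi\in S_N}\frac{1}{N}\sum_i d(y_i,z_{\pi(i)})\), greedily constructs a permutation that first matches up coincident points, bounds the cost of the remaining mismatched pairs by \(D\) each, and finally passes to general weights by approximation in \(W_1\). You instead work entirely on the dual side, directly from the Kantorovich--Rubinstein formula \eqref{eq:kr-duality} that the paper itself adopts as its definition of \(W_1\): you fix a \(1\)-Lipschitz test function, split \(\gamma_i=\alpha_i-\beta_i\) into positive and negative parts, and use \(\sum_i\gamma_i=0\) to rewrite \(\sum_i\gamma_i f(x_i)\) as a nonnegative bilinear combination \(\frac{1}{s}\sum_{i\in P}\sum_{j\in N}\gamma_i\abs{\gamma_j}(f(x_i)-f(x_j))\), each term of which is bounded by \(D\). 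The dual argument is shorter, avoids the rational approximation step entirely, and produces the factor \(\tfrac12\) transparently from \(s=\tfrac12\sum_i\abs{\gamma_i}\); the paper's primal argument encodes the same \(\tfrac12\) less directly via the count of mismatched indices in the permutation. In fact, as you note parenthetically, your bilinear identity is exactly the dual shadow of the natural product coupling of the positive and negative parts of \(\mu-\nu\), so the two proofs are morally the same transport plan viewed from opposite sides of Kantorovich duality, but your presentation is tighter and sits more comfortably with the paper's chosen definition of \(W_1\).
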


We remark that if \(\mu\) and \(\nu\) denote the measures on the left-hand side of \eqref{eq:espinola}, the above sum is equal to the total variation \(\abs{\mu-\nu}(X)\). 

\begin{proof}[Proof of Lemma~\ref{lem:barycentric-properties}]
We set
\[
\mu=\frac{1}{n}\sum_{i=1}^n \alpha_i \delta_{x_i} \quad \text{ and } \quad \nu=\frac{1}{n}\sum_{i=1}^n \beta_i \delta_{y_i}.
\]
To begin, we suppose that the weights are rational numbers. Hence, there exist points \(y_1, z_1, \ldots, y_N, z_N \in X\) such that
\[
\mu=\frac{1}{N}\sum_{i=1}^N \delta_{y_i} \quad \text{ and } \quad \nu=\frac{1}{N}\sum_{i=1}^N \delta_{z_i}.
\]
It is well known that in this case
\begin{equation}\label{eq:explicit-wasserstein-distance}
W_1(\mu, \nu)= \min_{\pi\in S_{N}} \frac{1}{N} \sum_{i=1}^{N} d(y_i, z_{\pi(i)}) 
\end{equation}
where \(S_{N}\) denotes the symmetric group of order \(N\), see e.g. \cite[p. 56]{villani--2003}. Now, let \(\pi\in S_N\) be a permutation defined inductively as follows. Set \(A_0=\varnothing\) and abbreviate \(\Omega=\{1, \ldots, N\}\). Suppose \(\pi\) is defined on \(A_{k}\). We extend \(\pi\) to \(A_{k+1}\) according to the following rule. If there exist \(i\in \Omega \setminus A_k\) and \(j\in \Omega \setminus \pi(A_k)\) such that \(d(y_i, z_j)=0\), then we set \(A_{k+1}=A_{k} \cup \{i\}\) and \(\pi(i)=j\). If no such \(i\) and \(j\) exist, then we set \(A_{k+1}=\Omega\) and extend \(\pi\) to a permutation of \(\Omega\). Hence, by our construction of \(\pi\),
\begin{equation}\label{eq:with-special-minimizer}
\frac{1}{N} \sum_{i=1}^{N} d(y_i, z_{\pi(i)})\leq D \cdot \sum_{i\in I} \alpha_i-\beta_i,
\end{equation}
where \(I\) denotes the set of all \(i\in \{1, \ldots, n\}\) such that \(\alpha_i \geq \beta_i\). Let \(J\) denote the set of all \(j\in \{1, \ldots, n\}\) such that \(\beta_j \geq \alpha_j\). Clearly, it holds that
\[
\Delta:=\sum_{i\in I} \alpha_i-\beta_i=\sum_{j\in J} \beta_j-\alpha_j.
\]
and the right hand side of \eqref{eq:espinola} is equal to \(D\cdot \Delta\). Thus, by combining \eqref{eq:with-special-minimizer} with \eqref{eq:explicit-wasserstein-distance}, we find that  \eqref{eq:espinola} follows if \(\mu\) and \(\nu\) have rational weights. The general case now follows from this case, since according to the formula on \cite[p. 106]{villani--2009}, any \(\mu= \alpha_1 \delta_{x_1}+\dotsm+\alpha_n \delta_{x_n}\) can be approximated in \(1\)-Wasserstein distance with arbitrary precision by \(\mu'_k=\alpha_{1, k}' \delta_{x_{1}}+\dotsm+\alpha_{n, k}' \delta_{x_n}\) with rational weights in such a way that \(\alpha_{i, k}\to \alpha_i\) as \(k\to \infty\).
\end{proof}

\section{Whitney extensions}\label{sec:whitney}

In this section we prove Theorem~\ref{thm:Lang-Schlichenmaier-explicit} stated in the introduction. The proof of this theorem serves as a blueprint for the more complicated Lipschitz extension results we will prove later on. All of the most important techniques are already present and are more accessible as the more technical arguments in the subsequent sections. The main idea behind the proof of Theorem~\ref{thm:Lang-Schlichenmaier-explicit} is a variant of the classical argument that Whitney used in proving his extension theorem \cite{whitney--1934}. The extension is constructed by the help of a barycenter map in the target space and a Whitney-type covering of \(X\setminus A\). The existence of such a covering is guaranteed by the following proposition. 

\begin{proposition}\label{prop:Theorem5.2Revisited}
Let \(X\) be a metric space and \(A\subset X\) a closed subset satisfying \(\Nagata(n,c)\). 
Let \(r>1\) and define \(\varepsilon= (r-1)\tfrac{1}{2r}\) and \(\delta= \tfrac{\epsilon}{2r}\). Then there exists a covering \((B_i)_{i\in I}\) of \(X\setminus A\) such that
\begin{enumerate}[itemsep=0.75em]
\item\label{it:one} for all \(i\in I\), one has 
\[
\diam{B_i} \leq \alpha\cdot r_i,
\]
where \(r_i=d(B_i, A)\) and
\[\alpha= \frac{2(r+\varepsilon)}{1-\varepsilon}(1+c);
\]
\item\label{it:two} for every \(x\in X \setminus A\)
\[
x\in N_{\delta \cdot r_i}(B_i)
\]
for at most \(3(n+1)\) indices \(i\in I\);
\item\label{it:three} for all \(i\in I\),
\[
\hd(B_i, A)\leq (r+\varepsilon)\,r_i,
\]
where \(\hd(B_i, A)\) denotes the asymmetric Hausdorff distance from \(B_i\) to \(A\), that is, \(\hd(B_i, A)= \sup\{ d(b, A) : b\in B_i\}\).
\end{enumerate}   
\end{proposition}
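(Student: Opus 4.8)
The plan is to build the covering by taking suitable open neighborhoods of the members of a Nagata covering of $X\setminus A$ at an appropriate scale, where the scale is chosen relative to the distance to $A$. The natural way to achieve a distance-dependent scale from a single scale parameter is to first stratify $X\setminus A$ into the ``dyadic-type'' annuli
\[
U_k=\{\,x\in X\setminus A : r^{k-1}<d(x,A)\le r^{k}\,\},\qquad k\in\Z,
\]
and to apply $\Nagata(n,c)$ to (a neighborhood of) the union of a few consecutive annuli at a scale comparable to $r^{k}$. Concretely, fix $k$ and apply the Nagata condition at scale $s_k=\varepsilon r^{k}$ to obtain a covering of $X$ (restricted to the relevant annuli) with $s_k$-multiplicity at most $n+1$ and members of diameter at most $c\,s_k$; intersect these members with $U_k$ and then enlarge each to its open $\tfrac{s_k}{2}$-neighborhood inside $X\setminus A$. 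This is exactly the ``good covering'' trick mentioned after Lemma~\ref{lem:itertive-ball-partitioning}. Indexing the resulting sets over all $k$ gives the family $(B_i)_{i\in I}$; I would set $r_i=d(B_i,A)$ and note $r^{k-1}<r_i\le r^{k}$ for each $B_i$ coming from level $k$, since $B_i\subset U_k$ by construction (possibly after also intersecting with $U_k$ to keep the level well-defined).

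The three conclusions are then estimated level by level. For \eqref{it:one}: a set $B_i$ from level $k$ is contained in the $\tfrac{s_k}{2}$-neighborhood of a Nagata member of diameter $\le c\,s_k$, so $\diam B_i\le (1+c)s_k=(1+c)\varepsilon r^{k}$; combining with $r^{k}<r\,r_i$ (more precisely using the relation between $r^k$ and $r_i$ together with the definitions $\varepsilon=(r-1)/2r$, $\delta=\varepsilon/2r$) and simplifying yields $\diam B_i\le\alpha r_i$ with $\alpha=\tfrac{2(r+\varepsilon)}{1-\varepsilon}(1+c)$. For \eqref{it:three}: every $b\in B_i$ lies within $\tfrac{s_k}{2}$ of a point in $U_k$, hence $d(b,A)\le r^{k}+\tfrac{s_k}{2}=r^{k}(1+\varepsilon/2)\le (r+\varepsilon)r_i$ after using $r^{k}<r\,r_i$ and absorbing the $\varepsilon/2$ term; this gives $\hd(B_i,A)\le(r+\varepsilon)r_i$. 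For \eqref{it:two}: fix $x\in X\setminus A$ in level $\ell$. If $x\in N_{\delta r_i}(B_i)$ with $B_i$ at level $k$, then $x$ is within $\delta r_i+\tfrac{s_k}{2}$ of the underlying Nagata member at level $k$; one checks via the smallness of $\delta$ and $\varepsilon$ relative to $r$ that this forces $|k-\ell|\le 1$, so only the three levels $\ell-1,\ell,\ell+1$ can contribute. Within each such level, membership of $x$ in $N_{\delta r_i}(B_i)$ places $x$ in a set of diameter $<s_k$ that meets the underlying Nagata member, and the $s_k$-multiplicity bound $n+1$ caps the number of such $i$ per level; summing over the three levels gives at most $3(n+1)$ indices.

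The bookkeeping obstacle — and the step I expect to be the real work — is \eqref{it:two}: one must verify that the inflation parameter $\delta$ and the neighborhood radius $\tfrac{s_k}{2}=\tfrac{\varepsilon r^{k}}{2}$ are small enough, relative to the geometric ratio $r$ and to the annulus widths, that (a) a $\delta r_i$-enlargement of a level-$k$ set cannot reach a point more than one level away, and (b) at a single level the enlarged sets $N_{\delta r_i}(B_i)$ still have the property that containing a common point $x$ puts $x$ into a subset of diameter $<s_k$ hitting the corresponding Nagata member, so the multiplicity bound applies cleanly. This is precisely where the specific choices $\varepsilon=(r-1)\tfrac{1}{2r}$ and $\delta=\tfrac{\varepsilon}{2r}$ in the statement are calibrated, and getting the inequalities to close with the stated constants is the crux; the rest is routine. (One should also note that $\varepsilon\in(0,\tfrac12)$ and $\delta\in(0,\tfrac14)$ for all $r>1$, which is what makes $\alpha$ finite and the level-separation argument work.)
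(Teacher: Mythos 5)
There is a genuine gap at the very first step, and it is fatal to the whole argument: you apply the condition $\Nagata(n,c)$ to $X$ (or to subsets of $X\setminus A$), but the hypothesis only asserts that the \emph{closed subset} $A$ satisfies $\Nagata(n,c)$. Nothing is assumed about the Nagata dimension of $X$ or of the annuli $U_k\subset X\setminus A$; these could have arbitrarily large or infinite Nagata dimension. You write of ``a Nagata covering of $X\setminus A$'' and later ``apply the Nagata condition at scale $s_k=\varepsilon r^k$ to obtain a covering of $X$ (restricted to the relevant annuli) with $s_k$-multiplicity at most $n+1$'' --- no such covering is available under the stated hypothesis. Since the multiplicity bound in \eqref{it:two} is exactly what that (non-existent) covering is supposed to deliver, the gap infects the crucial conclusion.

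The paper does begin from essentially your annulus stratification $R_k=\{x: r^k\le d(x,A)<r^{k+1}\}$, but it does not cover $R_k$ by a Nagata cover of $R_k$. Instead, for each $k$ it chooses a maximal $\varepsilon r^k$-separated set $W\subset R_k$ and a ``projection'' $\rho\colon W\to A$ with $d(w,\rho(w))\le r^{k+1}$, takes a Nagata cover $(A_i)_{i\in I_k}$ of $A$ (where the hypothesis actually lives) at scale $s=2(2\varepsilon+r)r^k$, and defines
\[
B_i=\bigl\{ x\in X\setminus A : \exists\, w\in W \text{ with } d(x,w)\le \varepsilon r^k \text{ and } \rho(w)\in A_i \bigr\}.
\]
The multiplicity bound for the enlarged sets is then deduced by observing that if $x$ lies in several inflated $B_i$'s, the corresponding projected points $\rho(w_i)\in A$ form a set of diameter at most $s$ that meets the distinct $A_i$'s, so the $s$-multiplicity of $(A_i)$ in $A$ gives the per-level bound $n+1$; the three-level overlap then yields $3(n+1)$, as you anticipated. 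This pull-back of the Nagata cover from $A$ to $X\setminus A$ via $W$ and $\rho$ is the essential mechanism your proposal is missing. You correctly flag \eqref{it:two} as the crux and correctly guess the $3(n+1)$ splits into three levels of $(n+1)$, but the covering you invoke to get the $n+1$ per level does not exist under the hypotheses, so the argument cannot get started.
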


The conditions above are called \textit{controlled diameter}, \textit{bounded multiplicity}, and \textit{controlled distance to $A$}, respectively. Proposition~\ref{prop:Theorem5.2Revisited} is a special case of a more refined construction from \cite[p. 3653]{lang-schlichenmaier}. Its proof can be found at the end of this section. 

Suppose now that we are in the setting of Theorem~\ref{thm:Lang-Schlichenmaier-explicit}. In particular, \(X\) is a metric space and \(A\subset X\) a closed subset satisfying \(\Nagata(n, c)\). We are given a \(1\)-Lipschitz map \(f\colon A \to Y\) to a complete metric space \(Y\) of generalized non-positive curvature. Notice that due to Theorem~\ref{thm:existence-of-barycenter-map}, we know that \(Y\) admits a barycenter map \(\beta\colon \mathcal{P}_1(Y)\to Y\). In the following, we construct a Lipschitz extension \(F\colon X \to Y\) of \(f\) using this barycenter map. 

Let \((B_i)_{i\in I}\) be a covering of \(X\setminus A\) as in Proposition~\ref{prop:Theorem5.2Revisited}. We define \(U_i= N_{\delta \cdot r_i}(B_i)\) for all \(i\in I\). We consider extensions \(F\colon X\to Y\) of the following form
\[
F(x)=\beta\big(\sum_{i\in I} \phi_i(x) \delta_{f(a_i)}\big),
\]
where \((\phi_i)\) is a partition of unity subordinated to \((U_i)\) and \((a_i)\) are suitably chosen points of \(A\). The main idea is that the partition of unity and the  points \((a_i)\) are compatible in the following sense. Whenever \(\phi_i(x)\), \(\phi_j(x) >0\), then we have \(d(a_i, a_j)\lesssim d(x, A)\). On the other hand, we want a good partition of unity in the sense that   \(\sum \Lip\phi_i(x)\lesssim d(x, A)^{-1}\). By combining these two properties, we can show that \(\Lip F(x) \leq L\) for some uniform constant \(L\). This then directly implies that \(F\) is \(L\)-Lipschitz, due to Lemma~\ref{lem:great-simplification}. In the following lemma we establish the existence of such a partition of unity, using an idea going back to Johnson, Lindenstrauss, and Schechtman; see \cite[p. 135]{lindenstrauss--1986}.

\begin{lemma}\label{lem:partition-of-unity}
There exists a Lipschitz partition of unity \((\phi_i)_{i\in I}\) subordinated to \((U_i)_{i\in I}\), such that for all \(x\in X\setminus A\),
\[
\sum_{i\in I} \Lip \phi_i (x) \leq 6 \cdot\frac{\log\bigl( 3(n+1)\bigr)}{\delta r_j},
\]
where \(j\in I\) is some index such that \(x\in B_{j}\). 
\end{lemma}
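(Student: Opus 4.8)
The plan is to follow the Johnson–Lindenstrauss–Schechtman recipe: turn the covering $(U_i)$ into a partition of unity by composing a fixed "bump'' function with the distances $d(x, X\setminus U_i)$, but crucially choosing the bump to decay \emph{exponentially} rather than linearly, so that the sum of pointwise Lipschitz constants sees a $\log$ of the multiplicity instead of the multiplicity itself. Concretely, fix a real parameter $\sigma>0$ (to be optimized) and set $\psi_i(x) = \exp\!\bigl(\sigma \cdot d(x, X\setminus U_i)/(\delta r_i)\bigr) - 1$, which vanishes exactly off $U_i$ and is positive on $U_i$; then normalize, $\phi_i = \psi_i / \sum_{k\in I}\psi_k$. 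One first checks that $\sum_k \psi_k$ is locally finite and bounded below near any point: if $x\in B_j$, then $B(x,\delta r_j)\subset U_j$ by definition of $U_j$, so $d(x, X\setminus U_j)\geq \delta r_j$ — wait, more carefully $d(x,X\setminus U_j)\geq \delta r_j$ only if the whole ball lies in $U_j$, which it does — hence $\psi_j(x)\geq e^{\sigma}-1>0$, giving a uniform lower bound on the denominator near $x$.

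The second ingredient is a bound on how many indices are "active'' near $x$ and how large $d(x,X\setminus U_i)$ can be for those indices. By part~\eqref{it:two} of Proposition~\ref{prop:Theorem5.2Revisited}, the point $x$ lies in at most $N:=3(n+1)$ of the sets $U_i$, so at a given point at most $N$ terms contribute; moreover for any $i$ with $\phi_i(x)>0$ one has $x\in U_i$, and comparing radii via $r_i$ versus $r_j$ (using $d(B_i,A)$ against $d(B_j,A)$, the triangle inequality, and $\diam B_i\leq\alpha r_i$ from part~\eqref{it:one}) one gets a two-sided comparison $r_i \asymp r_j$ on the set of active indices, so $d(x,X\setminus U_i)/(\delta r_i)$ is comparable to $d(x,X\setminus U_i)/(\delta r_j)$ and in particular bounded. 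I would then estimate the pointwise Lipschitz constant: since $d(\cdot, X\setminus U_i)$ is $1$-Lipschitz, $\Lip\psi_i(x)\leq \tfrac{\sigma}{\delta r_i}\,e^{\sigma\, d(x,X\setminus U_i)/(\delta r_i)} = \tfrac{\sigma}{\delta r_i}\,(\psi_i(x)+1)$, and by the quotient rule $\Lip\phi_i(x) \leq \bigl(\Lip\psi_i(x)\cdot S(x) + \psi_i(x)\cdot\sum_k\Lip\psi_k(x)\bigr)/S(x)^2$ where $S=\sum_k\psi_k$. Summing over $i$ and using $\sum_i\psi_i=S$ and $\sum_i\Lip\psi_i \leq \tfrac{C\sigma}{\delta r_j}(S+N)$ (with $C$ from the $r_i\asymp r_j$ comparison), one obtains $\sum_i\Lip\phi_i(x)\leq \tfrac{2C\sigma}{\delta r_j}\cdot\tfrac{S+N}{S}$. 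Since $S(x)\geq e^{\sigma}-1$, the ratio $(S+N)/S \leq 1 + N/(e^\sigma-1)$, so the whole bound is roughly $\tfrac{\sigma}{\delta r_j}\bigl(1 + N/e^\sigma\bigr)$ up to the absolute constant $C$; choosing $\sigma = \log N$ balances the two terms and yields the claimed $6\log(3(n+1))/(\delta r_j)$ once the constants are tracked.

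The main obstacle I anticipate is \emph{not} the exponential-bump trick itself but the bookkeeping that makes the estimate $\Lip\psi_i(x)\lesssim \tfrac{\sigma}{\delta r_j}\psi_i(x)$ hold with $r_j$ (a fixed scale depending only on $x$) in place of the index-dependent $r_i$: this needs the quantitative comparison $r_i \asymp r_j$ for active indices, which in turn uses parts~\eqref{it:one} and~\eqref{it:two} of the proposition together with the elementary fact that if $x\in U_i\cap U_j$ then $d(x,A)$, $r_i$ and $r_j$ are all comparable (because $r_i = d(B_i,A) \leq d(x,A) \leq d(x,B_i)+\hd(B_i,A)+\ldots$, and $B(x,\delta r_i)\subset U_i$ forces $\delta r_i \leq$ some multiple of $d(x,A)$). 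Getting a clean constant out of this comparison, and then verifying that the final absolute constant really is at most $6$, is where the care is required; the rest is the standard partition-of-unity computation. I would also need the continuity/local-finiteness remark to justify applying $\Lip$ termwise and to invoke Lemma~\ref{lem:great-simplification} later, but that is routine once the comparison of scales is in hand.
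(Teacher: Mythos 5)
Your proposal follows the same Johnson–Lindenstrauss–Schechtman idea as the paper — normalize a nonlinear composite of the distances $d(\cdot,X\setminus U_i)$ so that the multiplicity $N=3(n+1)$ enters only logarithmically — but with a different bump function: you use $\psi_i(x)=\exp(\sigma\, d(x,X\setminus U_i)/(\delta r_i))-1$, whereas the paper takes $\psi_i(x)=d(x,X\setminus U_i)^m$ and then applies H\"older's inequality with exponents $m$ and $m/(m-1)$. The paper's choice is cleaner in one important respect: its $\psi_i$ contains no $r_i$, so the index-dependent scale never enters the pointwise Lipschitz estimate — it only appears at the very end via the lower bound $\psi(x)\geq(\delta r_j)^m$. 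Your exponent, by contrast, divides by $\delta r_i$, which forces exactly the scale-comparison step you flag as the anticipated obstacle: to replace $r_i$ by the fixed $r_j$ you need $r_j\leq C\,r_i$ for all active $i$. That comparison does hold (and it comes from part~\eqref{it:three} of Proposition~\ref{prop:Theorem5.2Revisited}, the controlled asymmetric Hausdorff distance, rather than from parts~\eqref{it:one} and~\eqref{it:two} as you indicate): since $r_j\leq d(x,A)\leq d(x,b_i)+\hd(B_i,A)\leq(\delta+r+\varepsilon)r_i$ for any $b_i\in B_i$ within $\delta r_i$ of $x$, one gets $C=\delta+r+\varepsilon$. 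The price is that this $C$ multiplies your final bound; a rough accounting ($2C\cdot\sigma/(\delta r_j)\cdot(1+N/(e^\sigma-1))$ with $\sigma=\log N$) gives something like $2C\cdot\tfrac{2N-1}{N-1}\cdot\log N/(\delta r_j)$, which exceeds $6\log N/(\delta r_j)$ when $N$ is small (say $N=3$). The paper's H\"older route yields $2m\cdot N^{1/m}/(\delta r_j)$ with $m=\log N$, i.e.\ $2e\,\log N/(\delta r_j)<6\log N/(\delta r_j)$, uniformly in $N$, with no scale comparison at all. So your argument is sound in outline but the bookkeeping you rightly identify as delicate does not obviously close at the stated constant; switching to the power bump removes both the scale comparison and the constant worry.
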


\begin{proof}
Set \(\psi_i(x)=d(x, X\setminus U_i)^{\,m}\) for some \(m>1\) to be determined later. Because of Proposition~\ref{prop:Theorem5.2Revisited}\eqref{it:two} and since \((U_i)_{i\in I}\) covers \(X\setminus A\), it follows that \(\psi(x)=\sum_{i\in I} \psi_i(x)\) is a well-defined positive function on \(X\setminus A\). We define \(\phi_i(x)= \psi_i(x)/\psi(x)\).  
A straightforward computation shows that
\[
\abs{\phi_i(x)-\phi_i(x')}\leq \frac{1}{\psi(x)}\abs{\psi_i(x)-\psi_i(x')}+\frac{\psi_i(x')}{\psi(x) \psi(x')}\sum_{j\in I} \abs{\psi_j(x')-\psi_j(x)},
\]
and so 
\[
\sum_{i\in I} \Lip \phi_i (x) \leq \frac{2}{\psi(x)} \sum_{i\in I} \Lip \psi_i(x).
\]
Clearly, 
\[
\Lip \psi_i(x)=m \cdot \psi_i(x)^{\frac{m-1}{m}}.
\]
Thus, by invoking H\"olders inequality with \(p=m\) and \(q=\tfrac{m}{m-1}\), we get
\[
\sum_{i\in I} \Lip \phi_i (x)\leq 2 m \cdot\Bigl(\frac{ 3(n+1)}{\psi(x)}\Bigr)^{\frac{1}{m}}.
\]
Since \(x\in B_j\) for some \(j\in I\), we have \(\psi(x)\geq d(x, U_j^c)^m\geq (\delta r_j)^m\). Notice that \(\log(3(n+1))\geq \log(3)>1\). Therefore, letting \(m=\log(3(n+1))\), we obtain the desired upper bound. 
\end{proof}

\begin{proof}[Proof of Theorem~\ref{thm:Lang-Schlichenmaier-explicit}]
Let \(X\) be a metric space, \(A\subset X\) a closed subset and \(f\colon A\to Y\) a \(1\)-Lipschitz map to a complete metric space \(Y\) of generalized non-positive curvature. Since \(X\) can  be embedded isometrically into \(\ell_\infty(X)\) by means of Kuratowski's embedding, we may assume without loss of generality that \(X\) is a Banach space. 

Fix \(r>1\) and \(\varepsilon_0>0\). Let \((B_i)_{i\in I}\) be a covering of \(X\setminus A\) as in Proposition~\ref{prop:Theorem5.2Revisited} and let \((\phi_i)_{i\in I}\) be as in Lemma~\ref{lem:partition-of-unity}. For each \(i\in I\), we select \(a_i\in A\) such that \(d(a_i, B_i) \leq (1+\varepsilon_0) r_i\). Moreover, we fix a barycenter map \(\beta\colon \mathcal{P}_1(Y)\to Y\). Now, we have everything at hand to define \(F\colon X \to Y\). We set \(F(a)= f(a)\) for all \(a\in A\), and 
\[
F(x)=  \beta\big(\sum_{i\in I}\phi_i(x) \delta_{f(a_i)}\big)
\]
for all \(x\in X\setminus A\). Let us first check that this defines a continuous extension of \(f\). As each function \(\phi_i\) is continuous, it easily follows that \(F\) is continuous at every point \(x\in X\setminus A\). Thus, we only need the check continuity at points \(a\in A\). Observe that
\[
d(F(a), F(x)) \leq \sum_{i\in I(x)} \phi_i(x) d(a, a_i),
\]
where we use the notation \(I(x)= \{ i\in I : \phi_i(x)>0\}\). Using the definition of \(a_i\), the controlled diameter condition of Proposition~\ref{prop:Theorem5.2Revisited}, and the definition of \(U_i\), we obtain that
\begin{equation}\label{eq:aux-1}
d(a_i, x) \leq \bigl( 1+\varepsilon_0+\alpha+ \delta\bigr) r_i
\end{equation}
for every \(i\in I(x)\). Moreover, \((1-\delta)\cdot r_i \leq d(x, A)\) for any such index \(i\). Hence, it follows from the above, that
\[
d(F(a), F(x))\leq \frac{2+\epsilon_0+\alpha+\delta}{1-\delta} \cdot d(a, x)
\]
for all \(a\in A\) and all \(x\in X\setminus A\). In particular, \(F\) is continuous at every point of \(A\). We now establish the existence of a constant \(L\geq 1\) such that \(\Lip F(x) \leq L\) for all \(x\in X\setminus A\). According to Lemma~\ref{lem:great-simplification} this then directly implies that \(F\) is \(L\)-Lipschitz, since \(X\) is a Banach space. With the help of Lemma~\ref{lem:barycentric-properties}, we compute for all \(x\), \(x'\in X\setminus A\),
\begin{align*}
d(F(x),F(x')) \leq \frac{D(x, x')}{2}\sum_{i\in I(x)\cup I(x')} \abs{\phi_i(x)-\phi_i(x')},
\end{align*}
where \(D(x, x')=\max\{ d(a_i, a_j) : i, j\in I(x)\cup I(x')\}\). Therefore,
\begin{equation}\label{eq:crucial-estimate}
\Lip F(x)\leq \frac{1}{2} \big[\limsup_{x' \to x} D(x, x') \big] \cdot \sum_{i\in I} \Lip \phi_i (x)  
\end{equation}
for all \(x\in X\setminus A\). Notice that \(I(x)\subset I(x')\) whenever \(x'\) is sufficiently close to \(x\). So \eqref{eq:aux-1} implies the following estimate,
\[
D(x, x') \leq 2\bigl( 1+\varepsilon_0+\alpha+ \delta)\cdot \max_{i\in I(x')} r_i. 
\]
Now, let \(j\in I(x)\) be such that \(x\in B_j\). Then using that for every \(i\in I(x')\),
\[
(1-\delta) \cdot r_i \leq d(x', A)\leq d(x, x')+d(x, A),
\]
as well as, \(d(x, A) \leq \hd(B_j, A)\leq (r+\varepsilon) r_j\),
we find that
\[
\max_{i\in I(x')} r_i \leq \frac{r+\varepsilon}{1-\delta} r_j+\frac{1}{1-\delta} d(x, x'). 
\]
Hence, by combining \eqref{eq:crucial-estimate} with Lemma~\ref{lem:partition-of-unity}, we conclude 
\begin{equation}\label{eq:what-we-want}
\Lip F(x) \leq  6\,\bigl( 1+\varepsilon_0+\alpha+ \delta) \frac{r+\varepsilon}{\delta(1-\delta)}  \log(3(n+1)). 
\end{equation}
We set \(r=\tfrac{5}{4}\). Then \(\varepsilon=\tfrac{1}{10}\), \(\delta=\tfrac{1}{25}\), \(\alpha= 3(1+c)\) and so \eqref{eq:what-we-want} evaluates to
\[
\Lip F(x) \leq \frac{3375}{16} \cdot\big[3 (1 + c) +\frac{26}{25}\big]\cdot  \log(3(n+1))+C_0 \varepsilon_0,
\]
for some \(C_0\) independent of \(\varepsilon_0\). 
Since \(\log(3(n+1))\leq \tfrac{23}{20} \cdot \log_2(n+2)\), the desired estimate \eqref{eq:estimate-of-F} follows. 
\end{proof}

\begin{proof}[Proof of Proposition~\ref{prop:Theorem5.2Revisited}]
Fix \(k\in \Z\). We define
\[R=R_k= \bigl\{ x\in X \,:\, r^k \leq d(x, A) < r^{k+1} \bigr\}.\]
Let \(W\subset R\) be a maximal (with respect to inclusion) \(\varepsilon r^k\)-separated subset of \(R\) and \(\rho \colon W\to A\) any map such that \(d(\rho(w), w)\leq r^{k+1}\) for all \(w\in W\). To ensure the existence of such a map we rely on the axiom of choice. Let
\[
s= 2\bigl(2\varepsilon+r\bigr) r^{k}
\]
and assume \((A_i)_{i\in I_k}\) is a \(cs\)-bounded cover of \(A\) with \(s\)-multiplicity at most \(n+1\). We define 
\[
B_i(\eta)= \bigl\{ x\in X\setminus A : \exists  w\in W \text{ s. t. } d(x,w)\leq \varepsilon r^{k}+\eta \text{ and } \rho(w)\in A_i \bigr\}
\]
and \(B_i= B_i(0)\) for every \(i\in I_k\). By construction, \(\mathcal{B}_k=(B_i)_{i\in I_k}\) is a covering of \(R\) by subsets of \(X\setminus A\). Clearly,
\[
\diam{B_i}\leq 2(\varepsilon+r) r^{k}+cs \leq \bigl( 2(1+c)\bigl(\varepsilon+r\bigr)\bigr) r^k,
\]
and so \(\diam{B_i} \leq \alpha \cdot d(B_i, A)\) for all \(i\in I_k\). Moreover, for every \(b\in B_i\),
\[
d(b, A)\leq (\epsilon+r) r^k.
\]
Let \(\mathcal{B}\) be the union of all \(\mathcal{B}_k\). This is a covering of \(X\setminus A\)  and by the above, \eqref{it:one} and \eqref{it:three} follow. To finish the proof we show \eqref{it:two}, the bounded multiplicity condition. It is easy to check that \(\delta \cdot d(B_i, A) \leq \varepsilon r^k\) for every \(i\in I_k\) and therefore
\begin{equation}\label{eq:reduction1}
N_{\delta r_i}(B_i) \subset B_i(\varepsilon r^{k}).
\end{equation}
Fix \(x\in X\setminus A\). We claim that \(x\) meets at most \(n+1\) members of \(\mathcal{B}_k^+=(B_i(\varepsilon r^{k}))_{i\in I_k}\).
For every \(B_i(\varepsilon r^{k})\) that meets \(x\) there exists \(w_i\in W\) such that \(d(w_i, x)\leq 2 \varepsilon r^{k}\) and \(\rho(w_i)\in A_i\). We consider \(M= \bigl\{ \rho(w_i) : B_i(\varepsilon r^{k}) \cap \{x\} \neq \varnothing \bigr\}.\)  Since
\[
\diam(M) \leq 2\bigl(2\varepsilon+r\bigr) r^{k}=s, 
\]
it follows that \(M\) meets at most \(n+1\) members of \((A_i)_{i\in I_k}\) and consequently \(x\) meets at most \(n+1\) sets of \(\mathcal{B}_k^+\). Now, let \(\mathcal{B}^+\) be the union of all \(\mathcal{B}_k^+\). Since each \(B_i(\varepsilon r^{k})\) is contained in \(R_{k-1}\cup R_{k}\cup R_{k+1}\), it follows from the above, that \(x\) meets at most \(3(n+1)\) members of \(\mathcal{B}^+\).  Therefore, using \eqref{eq:reduction1}, we conclude that there are at most \(3(n+1)\) indices \(i\) in \(I= \bigcup I_k\) such that \(x\in N_{\delta r_i}(B_i)\), as desired.
\end{proof}

\section{Multiscale extensions}\label{sec:multiscale}

Throughout this section, let \(X\) be a Banach space and \(A\subset X\) a subset consisting of at most \(n\) points for some \(n\geq 3\). Suppose we are given a \(1\)-Lipschitz map \(f\colon A \to Y\) to a Banach space \(Y\). Our aim is to construct an extension \(F\colon X \to Y\) of \(f\) whose Lipschitz constant is as small as possible. For this we make the following “ansatz" for \(F\):
\[
F(x)=\frac{1}{N+1}\sum_{n\in \Z} \omega_n(x) F_n(x)
\]
where \(N\) is a sufficiently large positive integer, \(\omega_n \colon X \setminus A \to \R\) are suitably chosen cutoff functions, and  \(F_n \colon X_n \to Y\) are continuous maps on 
\[
X_n=\{ x\in X : d(x, A) \leq 2^n\}.
\]
The main reason why the proof works is that we can use Lemma~\ref{lem:itertive-ball-partitioning} to construct \(F_n\)'s in such a way that
\[
\sum \Lip F_n(x) \lesssim \log(n)
\]
where the sum runs over all \(n\) with \(\omega_n(x)=1\). This then implies that
\[
\Lip F(x) \lesssim \frac{\log(n)}{N}+\frac{2^N}{N},
\]
where the second term comes from those \(n\) for which \(\omega_n(x) \neq 0, 1\). Setting \(N\approx \log(\log(n))\) then induces the desired bound \(\log(n)/ \log(\log(n))\) in Lee and Naor's extension theorem.

We proceed with the construction of the maps \(F_n\). For every \(x\in X\), select \(a_x\in A\) such that 
\begin{equation}\label{eq:distance-to-ax}
d(x, a_x)=d(x, A),
\end{equation}
since \(A\) is finite such a point surely exists. 

\begin{lemma}\label{lem:construction-F_n}
For every \(n\in \Z\) there exists a map \(F_n\colon X_n \to Y\) such that \(\norm{F_n(x)-f(a_x)}\leq 2^n\) for all \(x\in X_n\) and 
\begin{equation}\label{eq:pointwise-bound-Lipschitz-constant-F_n}
\Lip F_n(x) \leq 120\cdot \log\big( \frac{\# B(a_x, 2^n)}{\# B(a_x, 2^{n-2})}\big)
\end{equation}
for all \(x\in X_n\) with \(d(x, A) \leq \frac{1}{16}\cdot 2^n\).
\end{lemma}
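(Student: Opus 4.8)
The plan is to obtain $F_n$ as a Johnson--Lindenstrauss--Schechtman style partition-of-unity extension built from a \emph{padded} covering of the finite set $A$, in the spirit of Lemma~\ref{lem:partition-of-unity}. First I would apply Lemma~\ref{lem:itertive-ball-partitioning} to the finite metric space $A$ at scale $2^{n}$, obtaining a finite covering $(B_{i})_{i\in I}$ of $A$ with $\diam B_{i}\lesssim 2^{n}$ and, for every $a\in A$,
\[
\frac{\#\{\,i\in I:B(a,2^{n-2})\subset B_{i}\,\}}{\#\{\,i\in I:a\in B_{i}\,\}}\;\ge\;\frac{\#B(a,2^{n-2})}{\#B(a,2^{n})}.
\]
For each $i$ choose a representative $a_{i}\in B_{i}\subset A$ and thicken to an open covering of $X_n$ by putting $U_{i}=N_{\rho}(B_{i})$ with $\rho\asymp 2^{n}$ large enough that $\bigcup_{i}U_{i}\supset X_n$; then $\diam U_{i}\lesssim 2^{n}$ and $x\in U_{i}$ forces $d(a_{i},x)\lesssim 2^{n}$. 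As in Lemma~\ref{lem:partition-of-unity}, set $\psi_{i}(x)=d(x,X\setminus U_{i})^{m}$, normalise $\phi_{i}=\psi_{i}/\sum_{j}\psi_{j}$, and define
\[
F_{n}(x)=\sum_{i\in I}\phi_{i}(x)\,f(a_{i}),
\]
a well-defined continuous map into the Banach space $Y$.

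The bound $\norm{F_{n}(x)-f(a_{x})}\le 2^{n}$ is the easy half. If $\phi_{i}(x)\ne 0$ then $x\in U_{i}$, whence $d(a_{i},a_{x})\le d(a_{i},x)+d(x,A)\lesssim 2^{n}$; since $f$ is $1$-Lipschitz,
\[
\norm{F_{n}(x)-f(a_{x})}=\Big\lVert\sum_{i}\phi_{i}(x)\bigl(f(a_{i})-f(a_{x})\bigr)\Big\rVert\le\max_{\phi_{i}(x)\ne 0}d(a_{i},a_{x}),
\]
and a sufficiently sharp choice of $\rho$ and of the diameter scale makes the right-hand side at most $2^{n}$. (If one prefers to keep $\rho$ and $\diam B_i$ genuinely small, one first constructs $F_{n}$ on a $\asymp 2^{n}$-neighbourhood of $A$, where the estimate is tight, and extends it to the remainder of $X_n$ by any crude continuous extension; only the weak bound of this assertion is needed there.)

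For the Lipschitz estimate, fix $x$ with $d(x,A)\le\tfrac{1}{16}2^{n}$ and $a_{x}\in B_{j}$. The triangle inequality in $Y$ gives
\[
\Lip F_{n}(x)\;\le\;\Bigl(\max_{\phi_{i}(x),\phi_{i'}(x)\ne 0}d(a_{i},a_{i'})\Bigr)\sum_{i}\Lip\phi_{i}(x)\;\lesssim\;2^{n}\sum_{i}\Lip\phi_{i}(x),
\]
so it suffices to show $\sum_{i}\Lip\phi_{i}(x)\lesssim 2^{-n}\log\bigl(\#B(a_{x},2^{n})/\#B(a_{x},2^{n-2})\bigr)$. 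This is exactly the computation of Lemma~\ref{lem:partition-of-unity}: by H\"older, $\sum_{i}\Lip\phi_{i}(x)\le 2m\,(\#I(x)/\psi(x))^{1/m}$ with $I(x)=\{i:x\in U_{i}\}$. Since $x$ lies at depth $\gtrsim 2^{n}$ in $U_{i}$ whenever $B(a_{x},2^{n-2})\subset B_{i}$ while $x\in U_{i}$ only for indices with $B_{i}$ close to $a_{x}$, the padded-covering inequality above bounds $\#I(x)/\psi(x)$ by $\lesssim 2^{-nm}\cdot\#B(a_{x},2^{n})/\#B(a_{x},2^{n-2})$; choosing $m=\log\bigl(\#B(a_{x},2^{n})/\#B(a_{x},2^{n-2})\bigr)$ collapses the resulting power of this ratio to the constant $e$, and keeping track of the numerical constants produces the factor $120$.

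The genuinely delicate point — as opposed to routine scale-bookkeeping — is that the optimal exponent $m$ depends on $x$ through $a_{x}$, whereas a single partition of unity needs one exponent. This is where one must exploit the permutation structure underlying Lemma~\ref{lem:itertive-ball-partitioning}: one lets the exponent depend on the cell, $m=m_{i}:=\log\bigl(\#B(a_{i},2^{n})/\#B(a_{i},2^{n-2})\bigr)$, uses that $\#B(a_{i},\cdot)$ is comparable to $\#B(a_{x},\cdot)$ at the relevant scales for the active indices $i$, and re-runs the H\"older step with varying exponents. Carrying this out correctly, together with the precise translation between the combinatorial padding of $(B_{i})$ on $A$ and the metric padding of $(U_{i})$ on $X_n$, is the main obstacle and is what ultimately pins down the constant $120$; the remaining points (continuity of $F_{n}$, the crude distance bounds, and the choice of $\rho$) are straightforward.
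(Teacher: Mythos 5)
Your high-level plan (Lemma~\ref{lem:itertive-ball-partitioning} at scale $2^n$, JLS partition of unity, $F_n=\sum_i\phi_i f(a_i)$) is indeed the paper's strategy, and your estimate for $\norm{F_n(x)-f(a_x)}$ and the reduction of the Lipschitz bound to a bound on $\sum_i\Lip\phi_i(x)$ follow the paper closely. However, the way you transfer the covering from $A$ to $X_n$ differs in a meaningful way. You set $U_i=N_\rho(B_i)$ for a covering $(B_i)$ of $A$. The paper instead uses the Voronoi preimages $B_i=\{x\in X_n:a_x\in A_i\}$. This choice is what makes the padding transfer clean: the paper proves directly that if $d(x,A)\le 2^{n-4}$ and $d(x,x')\le 2^{n-4}$ then $\norm{a_x-a_{x'}}\le 2^{n-2}$, and hence $\#\{i:B(x,2^{n-4})\subset B_i\}\ge\#\{i:B(a_x,2^{n-2})\subset A_i\}$, while $\#\{i:\psi_i(x)\ne0\}\le\#\{i:a_x\in A_i\}$ by definition. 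With a neighbourhood thickening $N_\rho(B_i)$ of a subset of $A$, neither comparison is immediate (the depth of $x$ in $U_i$ is not controlled by whether $B(a_x,2^{n-2})\subset B_i$, since that ball intersected with $A$ may just be $\{a_x\}$), and your sketch does not close this gap. The Voronoi construction also gives $\norm{F_n(x)-f(a_x)}\le 2^n$ exactly, because $a_x\in A_i$ and $\diam A_i\le 2^n$ imply $\norm{a_i-a_x}\le 2^n$; in your version this requires an unverified tuning of $\rho$.

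On the exponent $m$: you flag its dependence on $x$ as the delicate point and sketch a fix with cell-dependent exponents $m_i$ plus a comparability claim ``$\#B(a_i,\cdot)\approx\#B(a_x,\cdot)$ at the relevant scales.'' That comparability is not established and is not obviously true — $a_i$ and $a_x$ can be at distance of order $2^n$, at which scale the counting ratios $\#B(\cdot,2^n)/\#B(\cdot,2^{n-2})$ can differ substantially — and the paper does not use varying exponents at all. The paper's proof takes a single partition $\phi_i=\psi_i/\psi$ with $\psi_i=d(\cdot,X_n\setminus B_i)^m$, runs the H\"older estimate to get $\sum_i\Lip\phi_i(x)\le\frac{2m}{\delta 2^n}R(a_x)^{1/m}$ with $R(a_x)=\#B(a_x,2^n)/\#B(a_x,2^{n-2})$, and then chooses $m=\log R(a_x)$. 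You are right that this is the load-bearing step, and your instinct to treat it carefully is warranted: a choice of $m$ that depends on $x$ changes the partition (and hence $F_n$) itself, so as written this is the place in either argument that needs to be made watertight. Your proposed mechanism for doing so is, however, a sketch with a genuine gap in the comparability claim, and it is also not the route the paper takes; in that sense the proposal neither matches the paper nor stands on its own as a complete proof.
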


In order to move quickly to the proof of Lee and Naor's theorem, we postpone the proof of Lemma~\ref{lem:construction-F_n} to the end of the section. Let us now discuss how to construct suitable cutoff functions \(\omega_n\).  

Let \(\omega\colon \R\to \R\) be the unique piecewise linear function such that \(\omega(t)=1\) on \([1, 2^N]\) and \(\omega(t)=0\) if \(t\notin (2^{-1}, 2^{N+1})\). We set
\[
\omega_n(x)= \omega\big(\frac{1}{16}\cdot \frac{ 2^n}{d(x, A)}\big)
\]
for all \(n\in \Z\) and all \(x\in X\setminus A\). The following lemma collects all relevant properties of the \(\omega_n\)'s.

\begin{lemma}\label{lem:properties-of-cutoff-functions}
For every \(x\in X\setminus A\), one has
\begin{equation}\label{eq:miracle}
\sum_{n\in \Z} \omega_n(x)=N+1.
\end{equation}
 Moreover, if \(x\in X\setminus A\) is fixed and \(n_0\) denotes the maximal integer such that \(\frac{1}{16}\cdot 2^{n_0} \leq d(x, A)\), then  
\[
\begin{cases}
\omega_n(x)\in [0,1] & \text{ if \(n=n_0\) or \(n=n_0+(N+1)\)} \\
\omega_n(x)=1 & \text{ if \(n=n_0+1,\, \ldots,\, n_0+N\)} \\
\omega_n(x)=0 & \text{ otherwise,}
\end{cases}
\]
as well as 
\begin{equation}\label{eq:estimate-on-Lip-Omega}
\Lip \omega_n(x) \leq 2 \cdot \frac{1}{d(x, A)}
\end{equation}
if \(n\) is contained in \(\{n_0-1,\, n_0,\, n_0+N,\, n_0+(N+1)\}\) and \(\Lip \omega_n(x)=0\) otherwise.
\end{lemma}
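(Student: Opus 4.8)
The plan is to reduce every assertion to elementary facts about the single real function $\omega$, evaluated along the geometric progression
\[
t_n(x):=\frac{1}{16}\cdot\frac{2^n}{d(x,A)},
\]
for which $\omega_n(x)=\omega(t_n(x))$ and $t_{n+1}(x)=2\,t_n(x)$. The defining property of $n_0$ is exactly the statement $t_{n_0}(x)\in(\tfrac12,1]$, so $\bigl(t_{n_0+k}(x)\bigr)_{k\in\Z}=\bigl(2^k t_{n_0}(x)\bigr)_{k\in\Z}$ is a geometric progression passing through $(\tfrac12,1]$, and the whole lemma becomes a matter of inspecting $\omega$ on such a progression.

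For the identity \eqref{eq:miracle} I would first prove the scale‑invariant statement that $\sum_{k\in\Z}\omega(2^k s)=N+1$ for every $s>0$ (a finite sum, since $\omega(2^k s)\ne0$ only for $k$ in an interval of length $N+2$), and then specialize to $s=t_0(x)$. After substituting $v=\log_2 s$, the $k$‑th summand is $g(k+v)$ with $g(u)=\omega(2^u)$; here $g$ is the explicit trapezoid vanishing for $u\le-1$, equal to $u+1$ on $[-1,0]$, equal to $1$ on $[0,N]$, equal to $N+1-u$ on $[N,N+1]$, and vanishing for $u\ge N+1$. Writing $v=m+\theta$ with $m\in\Z$ and $\theta\in[0,1)$ and reindexing, $\sum_k g(k+v)=\sum_k g(k+\theta)$; there are exactly $N$ integers $k$ with $g(k+\theta)=1$, the two ramp terms contribute $g(-1+\theta)+g(N+\theta)=\theta+(1-\theta)=1$, and all other terms vanish, giving $N+1$ (the case $\theta=0$ is checked directly and gives the same).

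For the case distinction I would simply read off the value of $\omega$ along $t_{n_0+k}(x)=2^k t_{n_0}(x)$ using $t_{n_0}(x)\in(\tfrac12,1]$: for $k\le-1$ one has $t_{n_0+k}(x)\le\tfrac12$, hence $\omega_{n_0+k}(x)=0$; for $1\le k\le N$ one has $t_{n_0+k}(x)\in(2^{k-1},2^k]\subseteq[1,2^N]$, hence $\omega_{n_0+k}(x)=1$; for $k\ge N+2$ one has $t_{n_0+k}(x)>2^{N+1}$, hence $\omega_{n_0+k}(x)=0$; and for $k=0$, resp.\ $k=N+1$, the argument lies in $(\tfrac12,1]$, resp.\ $(2^N,2^{N+1}]$, so $\omega_{n_0+k}(x)\in[0,1]$.

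The Lipschitz estimate \eqref{eq:estimate-on-Lip-Omega} is the only place requiring a little care, because $\Lip$ is a $\limsup$. Since $d(\cdot,A)$ is $1$‑Lipschitz and strictly positive on $X\setminus A$, the inequality $\bigl|\tfrac1a-\tfrac1b\bigr|\le\tfrac{|a-b|}{ab}$ gives $\Lip t_n(x)\le t_n(x)/d(x,A)$, and a chain rule for pointwise Lipschitz constants (valid because $\omega$ is piecewise linear with finitely many pieces and $t_n$ is continuous) yields
\[
\Lip \omega_n(x) \le \ell\bigl(t_n(x)\bigr)\cdot\Lip t_n(x) \le \frac{t_n(x)\,\ell\bigl(t_n(x)\bigr)}{d(x,A)},
\]
where $\ell(s)$ denotes the local Lipschitz constant of $\omega$ at $s$; moreover, if $\ell(t_n(x))=0$ — i.e.\ $t_n(x)$ lies strictly inside a flat piece of $\omega$ — then $\omega\circ t_n$ is locally constant near $x$, so in fact $\Lip \omega_n(x)=0$. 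Inspecting $\omega$ shows $\ell\equiv0$ off $[\tfrac12,1]\cup[2^N,2^{N+1}]$, while $\ell\le2$ on $[\tfrac12,1]$ and $\ell\le2^{-N}$ on $[2^N,2^{N+1}]$; hence $s\,\ell(s)\le2$ for all $s$, giving $\Lip \omega_n(x)\le2/d(x,A)$ for every $n$. Finally, using $t_n(x)=2^{n-n_0}t_{n_0}(x)$ with $t_{n_0}(x)\in(\tfrac12,1]$, one checks that $t_n(x)\in[\tfrac12,1]$ forces $n-n_0\in\{-1,0\}$ and $t_n(x)\in[2^N,2^{N+1}]$ forces $n-n_0\in\{N,N+1\}$; for every other $n$ we are in the locally‑constant case, so $\Lip \omega_n(x)=0$. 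The only genuine obstacle is this last bookkeeping step: one must retain the boundary indices $n_0-1$ and $n_0+N$ in the list, since the $\limsup$ defining $\Lip$ can pick up a nonzero contribution there precisely when $d(x,A)=\tfrac{1}{16}2^{n_0}$, even though those indices contribute nothing for generic $x$.
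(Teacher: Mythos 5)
Your proof is correct and takes essentially the same approach as the paper: you verify the telescoping identity $\sum_n\omega_n(x)=N+1$ by reducing to the trapezoid profile of $\omega$ on the geometric progression through $(\tfrac12,1]$ (the paper does the equivalent computation directly with $\alpha=\tfrac{1}{16}\cdot 2^{n_0}/d(x,A)$), and you obtain the Lipschitz bound by combining the local Lipschitz constant of $\omega$ with that of $t\mapsto c_n/t$, noting the same boundary indices $n_0-1$ and $n_0+N$ that the paper flags as needed when $d(x,A)=\tfrac{1}{16}\,2^{n_0}$.
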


\begin{proof}
To begin, we show \eqref{eq:miracle}. Letting \(\alpha=\frac{1}{16}\cdot \frac{2^{n_0}}{d(x, A)}\), we find that 
\[
\sum_{n\in \Z} \omega_n(x)=\sum_{n\in \Z} \omega( \alpha \cdot 2^n).
\]
Since \(1/2 <\alpha \leq 1\), it follows from the definition of \(\omega\) that
\[
\sum_{n\in \Z} \omega_n(x)=\sum_{n=0}^{N+1} \omega(\alpha \cdot 2^n)=\omega(\alpha\cdot 2^0)+N+\omega(\alpha\cdot 2^{N+1}).
\]
Notice that \(\omega(t)=2t-1\) on \((2^{-1}, 1]\) and \(\omega(t)=-\frac{1}{2^N}t+2\) on \((2^N, 2^{N+1}]\). Therefore,
\[
\omega(\alpha)+\omega(\alpha\cdot 2^{N+1})=1,
\]
and so \eqref{eq:miracle} follows. To finish the proof, we establish \eqref{eq:estimate-on-Lip-Omega}. We have \(\Lip \omega(t)=2\) on \([2^{-1}, 1]\), \(\Lip \omega(t)=2^{-N}\) on \([2^N, 2^{N+1}]\) and \(\Lip \omega(t)=0\) otherwise. Hence, we only need to consider integers \(n\)  for which \(\frac{1}{16}\cdot \frac{2^{n}}{d(x, A)}\) is contained in 
\[
[2^{-1}, 1]\quad  \text{ or }\quad  [2^N, 2^{N+1}].
\]
This is can only happen if \(n\in \{n_0-1,\, n_0,\, n_0+N,\, n_0+(N+1)\}\). The indices \(n_0-1\) and \(n_0+N\) must be included to account for the case where \(d(x, A)=\frac{1}{16}\cdot 2^{n_0}\). We abbreviate \(t=d(x, A)\). Notice that 
\begin{equation}\label{eq:pointwise-omega}
\Lip \omega_n(x)\leq \Lip f_n(t), 
\end{equation}
where \(f_n\colon \R_{>0} \to \R_{>0}\) is defined by \(f_n(s)=\omega( c_n \cdot s^{-1})\) with \(c_n=\frac{1}{16}\cdot 2^{n}\). We have
\[
\Lip f_n(t) \leq 
\begin{cases}
2 c_n/ t^2, & \text{ if \(n=n_0-1\), \(n_0\)} \\
c_n/ (2^{N} t^2) & \text{ if \(n=n_0+N\), \(n_0+(N+1)\)}
\end{cases}
\]
and so \eqref{eq:estimate-on-Lip-Omega} follows from \eqref{eq:pointwise-omega}, since \(c_{n_0}/t \leq 1\).
\end{proof}

We now already have everything at hand to prove Lee and Naor's extension theorem stated in the introduction.

\begin{proof}[Proof of Theorem~\ref{thm:lee-and-naor}]
A straightforward application of \eqref{eq:miracle} shows that 
\[
\norm{F(x)-f(a_x)}\leq \frac{1}{N+1}\sum_{n\in \Z} \abs{\omega_n(x)}\cdot \norm{F_n(x)-f(a_x)}
\]
for all \(x\in X \setminus A\). Since \(\omega_n(x)\neq 0\) for at most \(N+2\) indices, we find by Lemma~\ref{lem:construction-F_n} that
\[
\norm{F(x)-f(a_x)} \leq 2^N \cdot d(x, A).
\]
Thus, to finish the proof, because of Lemma~\ref{lem:great-simplification}, it suffices to show that for all \(x\in X\setminus A\),
\[
\Lip F(x) \leq L.
\]
To this end,  we fix \(x\in X \setminus A\) and let \(n_0\) be defined as in Lemma~\ref{lem:properties-of-cutoff-functions}. Let \(x'\in X \setminus A\) be sufficiently close to \(x\) and denote by \(\Omega\subset \Z\) the set consisting of those indices \(n\) for which \(\omega_n(x)\neq 0\) or \(\omega_n(x')\neq 0\). Notice that \(\Omega=\{n_0-1,\, n_0,\, \ldots, \, n_0+N,\, n_0+(N+1)\}\). Lemma~\ref{lem:properties-of-cutoff-functions} tells us that \(\Lip \omega_n(x)=0\) if \(n\) is not equal to 
\[
a=n_0-1,\quad b=n_0,\quad c=n_0+N,\quad d=n_0+(N+1),
\]
and 
\[
\Lip \omega_n(x) \leq 2 \cdot \frac{1}{d(x, A)}<32 \cdot 2^{-n_0}
\]
if \(n=a\), \(b\), \(c\), \(d\). For each \(n\in \Z\), we consider the shifted map \(\widetilde{F}_n=F_n-v\) for \(v=f(a_x)\). Clearly, because of \eqref{eq:miracle}, we have
\[
F(x)-F(x')=\frac{1}{N+1}\cdot \sum_{n\in \Omega} \big[\omega_n(x) \widetilde{F}_n(x)- \omega_n(x') \widetilde{F}_n(x')\big].
\]
We now analyse each term
\[
T_n=\norm{\omega_n(x) \widetilde{F}_n(x)- \omega_n(x') \widetilde{F}_n(x')}
\]
separately. If \(n\in \Omega\setminus \{a, b, c, d\}\) then
\[
T_n=\norm{F_n(x)-F_n(x')}
\]
and if \(n=a\), \(b\), \(c\), \(d\) then
\[
T_n \leq \omega_n(x')\cdot \norm{F_n(x)-F_n(x')}+\abs{\omega_n(x)-\omega_n(x')}\cdot \norm{\widetilde{F}_n(x)}.
\]
Since \(\omega_n(x')\leq 1\), this implies that
\[
\Lip F(x) \leq \frac{1}{N+1} \Big[\sum_{n\in \Omega} \Lip F_n(x)\Big] +\frac{1}{N+1}\cdot \frac{128}{2^{n_0}}\cdot\Big[\max_{n=a, b, c, d} \, \norm{\widetilde{F}_n(x)}\Big].
\]
It therefore follows from Lemma~\ref{lem:construction-F_n} that
\begin{align*}
\Lip F(x) \leq \frac{120}{N+1} \Big[ \sum_{n=a}^{d} \log\Big( \frac{\# B(a_x, 2^n)}{\# B(a_x, 2^{n-2})}\Big)\Big]+\frac{1}{N+1}\cdot  \frac{128}{2^{n_0}}\cdot 2^{n_0+(N+1)},
\end{align*}
which simplifies to
\[
\Lip F(x) \leq 240 \cdot \frac{\log(n)}{N+1}+128 \cdot \frac{2^{N+1}}{N+1}.
\]
Hence, setting \(N+1=\lfloor \log_2(\log(N+1)) \rfloor\), we get
\[
\Lip F(x) \leq 600 \cdot \frac{\log(n)}{\log(\log(n))},
\]
as desired.
\end{proof}

We finish this section with the proof of Lemma~\ref{lem:construction-F_n}, which ensures the existence of the maps \(F_n\) used above.

\begin{proof}[Proof of Lemma~\ref{lem:construction-F_n}]
We will construct \(F_n\colon X_n \to Y\) using a Whitney type argument that uses a covering of \(X_n\) consisting of only finitely many sets.
Fix \(n\in \Z\) and let \((A_i)_{i\in I}\) be a \(2^{n}\)-bounded covering of \(A\) as in Lemma~\ref{lem:itertive-ball-partitioning}. In particular, \(I\) is finite and for every \(a\in A\),
\begin{equation}\label{eq:ratio-of-multiplicities}
\frac{\#\{ i\in I : B(a, 2^{n-2}) \subset A_i\}}{\#\{ i\in I : a\in A_i\}} \geq \frac{\# B(a, 2^{n-2})}{\# B(a, 2^n)}.
\end{equation}
For every \(i\in I\) we set \(B_i=\{ x\in X_n : a_x\in A_i\}\). Clearly, \((B_i)_{i\in I}\) defines a covering of \(X_n\).

In what follows, we fix a point \(x\in X_n\) with \(d(x, A) \leq \frac{1}{16} \cdot 2^n\). For simplicity, we also abbreviate \(\delta=\frac{1}{16}\). Suppose that \(x'\in X_n\) satisfies \(d(x, x') \leq \delta \cdot 2^n \). Then, because of \eqref{eq:distance-to-ax},
\[
\norm{a_x- a_{x'}} \leq d(x, A)+d(x, x')+d(x', A) \leq 2^{n-2},
\]
and so 
\begin{equation}\label{eq:multiplicity-of-B_i}
\#\big\{ i\in I : B(x, \delta \cdot 2^{n}) \subset B_i\big\} \geq \#\big\{ i\in I : B(a_x, 2^{n-2}) \subset A_i\big\}. 
\end{equation}
We now construct a partition of unity \((\phi_i)_{i\in I}\) using the same strategy as in Lemma~\ref{lem:partition-of-unity}. Let \(m>1\) be a real number, to be determined later. We set \(\psi_i(x)=d(x, X_n \setminus B_i)^m\). Because of \eqref{eq:multiplicity-of-B_i}, it follows that \(\psi(x)=\sum_{i\in I} \psi_i(x)\) is a positive function on \(X_n\). Letting \(\phi_i=\psi_i(x)/ \psi(x)\), we find by exactly the same reasoning as in the proof of Lemma~\ref{lem:partition-of-unity}, that
\begin{equation}\label{eq:desired-estimate-on-Lip}
\sum_{i\in I} \Lip \phi_i(x)\leq 2m\cdot \Big( \frac{\#\{ i\in I : \psi_i(x)\neq 0 \}}{\psi(x)} \Big)^{\frac{1}{m}}.
\end{equation}
We have \(\psi_i(x)\neq 0\) if and only if \(x\) is contained in the interior of \(B_i\). Hence, using that \(x\in B_i\) if and only if \(a_x\in A_i\), we obtain
\[
\#\big\{ i\in I : \psi_i(x)\neq 0 \big\}\, \leq \, \#\big\{ i\in I : a_x\in A_i\big\}.
\]
Since
\[
\psi(x)\geq (\delta\cdot 2^n)^m \cdot\Big[ \#\big\{ i\in I : B(x, \delta\cdot 2^{n}) \subset B_i\big\}\Big],
\]
it follows from \eqref{eq:desired-estimate-on-Lip} with the help of \eqref{eq:multiplicity-of-B_i} and \eqref{eq:ratio-of-multiplicities} that
\[
\sum_{i\in I} \Lip \phi_i(x) \leq \frac{2}{\delta \cdot 2^n} \cdot\Big[ m\cdot \Big( \frac{\# B(a_x, 2^{n})}{\# B(a_x, 2^{n-2})} \Big)^{\frac{1}{m}}\Big]. 
\]
Thus, setting \(m\) equal to the logarithm of the term from which the \(m\)-th root is taken in the expression above, we obtain
\begin{equation}\label{eq:inequality-of-pointtwise-Lipschitzconstant-of-phi-i}
\sum_{i\in I} \Lip \phi_i(x) \leq 100 \cdot 2^{-n} \cdot \log\Big( \frac{\# B(a_x, 2^n)}{\# B(a_x, 2^{n-2})}\Big)
\end{equation}
for each \(x\in X_n\) with \(d(x, A) \leq \frac{1}{16} \cdot 2^n\). Having such a partition of unity \((\phi_i)_{i\in I}\) at hand it is now not difficult to construct a map \(F_n\colon X_n \to Y\) with the desired properties. For each \(i\in I\) select \(a_i \in A_i\) and for every \(x\in X_n\) we set
\[
F_n(x)=\sum_{i\in I} \phi_i(x) f(a_i)
\]
Notice that \(F_n\) is not necessarily an extension of \(f\). But for every \(x\in X_n\), we have
\[
\norm{F_n(x)-f(a_x)}\leq \sum_{i\in I} \phi_i(x) \cdot \norm{a_i-a_x}
\]
where we used that \(f\) is \(1\)-Lipschitz. Since \(a_x\in A_i\) whenever \(\phi_i(x)\neq 0\), it follows that \(\norm{F_n(x)-f(a_x)}\leq 2^n\), as desired. To finish the proof we show \eqref{eq:pointwise-bound-Lipschitz-constant-F_n}.  
Fix \(x\in X_n\) with \(d(x, A)\leq \frac{1}{16}\cdot 2^n\). For every \(x'\in X_n\) one has
\[
F_n(x)-F_n(x')=\sum_{i \in I} \bigl[\phi_i(x)-\phi_i(x') \bigr]\cdot [ f(a_i)-f(a_x)]
\]
and thus 
\begin{equation}\label{eq:intermediate-step-pointwise-Lipschitz-constant-F-n}
\Lip F_n(x)\leq \sum_{i\in I} \Lip \phi_i(x) \cdot \norm{a_i- a_x}.
\end{equation}
Clearly,  if \(\Lip \phi_i(x) \neq 0\), then \(d(x, B_i)=0\). If \(x\) is contained in the closure of \(B_i\), then
\[
\norm{a_i-a_x}\leq 2^n+d(x, A)+d(x, x')+d(x', A)
\]
for any \(x'\in B_i\). This implies that \(\norm{a_i-a_x} \leq (1+\tfrac{1}{8})\cdot 2^n\) whenever \(\Lip \phi_i(x) \neq 0\). It thus follows from \eqref{eq:intermediate-step-pointwise-Lipschitz-constant-F-n} that
\[
\Lip F_n(x) \leq (1+\tfrac{1}{8})\cdot 2^n \cdot \Big[\sum_{i\in I} \Lip \phi_i(x)\Big]
\]
and so \eqref{eq:pointwise-bound-Lipschitz-constant-F_n} follows from \eqref{eq:inequality-of-pointtwise-Lipschitzconstant-of-phi-i}.
\end{proof}

\section{Geometry of simplicial complexes}\label{sec:geometry-of-simplicial-complexes}
All simplicial complexes appearing in this paper are interpreted as geometrical objects. For a given non-empty index set \(I\) we consider 
\[
\Sigma(I)=\big\{ x\in \ell_2(I) : x_i \geq 0 \text{ and } \sum_{i\in I} x_i=1 \big\}
\]
and equip it with the metric induced by the norm \(\abs{\,\cdot\,}\) of \(\ell_2(I)\). Spaces of this form will serve as the ambient space for all the simplicial complexes we consider.  We say that \(\Delta\subset \Sigma(I)\) is an \(n\)-simplex, for some \(n\geq 0\), if there is an \((n+1)\)-point subset \(J\subset I\) such that 
\begin{equation}
\Delta=\bigl\{ x\in\Sigma(I) : \sum_{j\in J} x_j=1\bigr\}.
\end{equation}
Notice that each \(n\)-simplex \(\Delta\subset\Sigma(I)\) is metrized with the Euclidean distance and isometric to the standard \(n\)-simplex \(\Delta^{n}\) whose definition we recall in \eqref{eq:simplex-def-2}. A subset \(\Delta\subset \Sigma(I)\) is said to be a simplex if it is an \(n\)-simplex for some \(n\geq 0\).

\begin{definition}
A simplicial complex in \(\Sigma(I)\) is a subset \(\Sigma\subset \Sigma(I)\) such that
\(\Sigma\) is the union of simplices \(\Delta_\alpha\subset \Sigma(I)\), \(\alpha\in A\). 
\end{definition}
Whenever we say that \(\Sigma\) is a simplicial complex, we tacitly assume that \(\Sigma \subset \Sigma(I)\) for some index set \(I\). We equip each simplicial complex \(\Sigma\subset \Sigma(I)\) with the induced metric. This metric will be referred to as the \(\ell_2\)-metric. Notice that whenever \(\Sigma\) contains more than one point then \(\diam(\Sigma)=\sqrt{2}\) with respect to the \(\ell_2\)-metric.

The following lemma will be used in the proof of Theorem~\ref{thm:bilipschitz-triangulation} at the end of this section, but it is also of independent interest. 
\begin{lemma}\label{lem:simplicial-geometry}
Let \(\Sigma\) be a simplicial complex equipped with the \(\ell_2\)-metric. If \(\Delta\), \(\Delta'\subset \Sigma\) are two \(n\)-simplices such that \(\Delta \cap \Delta'\neq \varnothing\), then for all \((x, y)\in \Delta \times \Delta'\), there exists \(z\in \Delta \cap \Delta'\) such that
\[
\abs{x-z}+\abs{z-y} \leq 4\cdot \sqrt{n}\cdot\abs{x-y}.
\]
\end{lemma}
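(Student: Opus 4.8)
The plan is to work entirely in the ambient Hilbert space $\ell_2(I)$ and exploit the fact that both simplices $\Delta$ and $\Delta'$ are affine pieces of the standard simplex $\Sigma(I)$. Write $\Delta$ as the simplex spanned by vertices $\{e_j : j\in J\}$ and $\Delta'$ by $\{e_k : k\in K\}$, where $|J|=|K|=n+1$ and, by hypothesis, $\Delta\cap\Delta'$ is the (possibly lower-dimensional) face spanned by $\{e_\ell : \ell\in J\cap K\}$, which is non-empty. Given $x=\sum_{j\in J}x_j e_j\in\Delta$ and $y=\sum_{k\in K}y_k e_k\in\Delta'$, the obvious candidate for $z\in\Delta\cap\Delta'$ is obtained by transporting the ``lost mass'' of $x$ (the coordinates on $J\setminus K$) and of $y$ (on $K\setminus J$) onto the common face. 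The cleanest choice is to first project $x$ onto the common face by zeroing out the coordinates in $J\setminus K$ and renormalizing — call this $x'$ — and similarly produce $y'$ from $y$; then take $z$ to be, say, $x'$ itself (or the midpoint of $x'$ and $y'$, whichever gives the better constant). I would then estimate $\abs{x-z}+\abs{z-y}$ by the triangle inequality via $\abs{x-x'}+\abs{x'-y'}+\abs{y'-y}$.

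The two outer terms are controlled as follows: if $t=\sum_{j\in J\setminus K}x_j$ is the total mass of $x$ on the non-common coordinates, then $x'$ is $x$ with those coordinates set to $0$ and the remaining ones scaled by $(1-t)^{-1}$, so a direct computation bounds $\abs{x-x'}$ by a universal multiple of $t$ (using $\sum (x'_j-x_j)^2 \le$ something like $t^2 + t\sum_{j} x_j^2/(1-t) \lesssim t$, and being a bit careful when $t$ is close to $1$). Crucially, $t$ itself is controlled by $\abs{x-y}$: since $y$ has no mass on $J\setminus K$, we have $t=\sum_{j\in J\setminus K}x_j\le \sum_{j\in J\setminus K}|x_j-y_j|\le \sqrt{n}\,\abs{x-y}$ by Cauchy--Schwarz over the at most $n$ indices in $J\setminus K$. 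The same bound applies to the mass $s$ of $y$ on $K\setminus J$. The middle term $\abs{x'-y'}$ is then bounded by $\abs{x'-x}+\abs{x-y}+\abs{y-y'}$, i.e.\ it contributes nothing new beyond the outer terms and $\abs{x-y}$. Collecting the estimates gives $\abs{x-z}+\abs{z-y}\le C\sqrt{n}\,\abs{x-y}$ for an absolute constant $C$, and the bookkeeping should comfortably fit inside $C=4$ — if not, I would instead take $z$ to be the midpoint of $x'$ and $y'$ and re-optimize, or sharpen the renormalization estimate.

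The main obstacle I anticipate is the renormalization estimate $\abs{x-x'}\lesssim t$ when $t\to 1$, i.e.\ when almost all of the mass of $x$ sits on coordinates absent from $\Delta'$: there the scaling factor $(1-t)^{-1}$ blows up, and one has to argue that this is harmless because in that regime $\abs{x-y}$ is already of order $1$ (indeed $t\le\sqrt n\abs{x-y}$ forces $\abs{x-y}\ge 1/\sqrt n$, but also $\abs{x-y}\le\sqrt 2$ always, so there is genuine tension for moderate $n$). The safe route around this is to observe that $\abs{x-x'}\le\abs{x-w}$ for any fixed $w$ in the common face, in particular $\abs{x-x'}\le\operatorname{diam}(\Sigma)=\sqrt 2$, and then note that whenever $t\ge 1/2$ (say) we already have $\abs{x-y}\ge t/\sqrt n\ge 1/(2\sqrt n)$, so $\abs{x-x'}\le\sqrt 2\le 2\sqrt 2\,\sqrt n\,\abs{x-y}$; while for $t<1/2$ the factor $(1-t)^{-1}<2$ and the naive bound $\abs{x-x'}\le C t$ with a small $C$ goes through. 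Splitting into these two cases removes the singularity and keeps all constants explicit and small.
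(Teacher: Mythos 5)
Your outline is correct but takes a genuinely different route from the paper. Both constructions transfer the ``lost mass'' $t=\sum_{j\in J\setminus K}x_j$ onto the common face, but where you renormalize, producing $x'$ with coordinates $x_j/(1-t)$ on $J\cap K$, the paper instead dumps the \emph{entire} lost mass onto a single common coordinate, taking $z=(x_0+\nu, x_1,\ldots,x_k,0,\ldots,0)$ with $\nu=\sum_{i>k}x_i$. That choice makes $\abs{x-z}^2=\nu^2+\sum_{i>k}x_i^2$ exactly, with no renormalization factor anywhere; it then estimates $\abs{x-z}^2+\abs{z-y}^2$ directly against $\abs{x-y}^2$ via Cauchy--Schwarz on $\nu$ and the inequality $(\abs{x-z}+\abs{z-y})^2\le 2(\abs{x-z}^2+\abs{z-y}^2)$, giving $2\sqrt{2}\sqrt{n}$ with no case split. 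Your singularity worry at $t\to 1$ is actually illusory: since $\sum_{j\in J\cap K}x_j^2\le\bigl(\sum_{j\in J\cap K}x_j\bigr)^2=(1-t)^2$, one has $\abs{x-x'}^2\le\bigl(\tfrac{t}{1-t}\bigr)^2(1-t)^2+\sum_{j\in J\setminus K}x_j^2\le 2t^2$ uniformly in $t\in[0,1)$, so the $(1-t)^{-1}$ factor cancels and the case split is unnecessary. That said, chasing your route through to a constant $\le 4\sqrt{n}$ requires care in bounding $\abs{x'-y}$ (the naive triangle inequality through $y'$ overshoots), whereas the paper's bookkeeping falls out cleanly in one Pythagorean computation. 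Both approaches are sound; the paper's choice of $z$ is the slicker one precisely because it never divides by $1-t$.
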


\begin{proof}
We may suppose that \(\Delta\neq \Delta'\), as otherwise there is nothing to prove. Consequently, \(k=\dim(\Delta \cap \Delta')\) satisfies \(k\in \{0, \ldots, n-1\}\). Without loss of generality, we may suppose that \(x\), \(y\in \R^{2n+1-k}\) with
\[
x=(x_0, \ldots, x_k, x_{k+1}, \ldots, x_n, 0, \ldots, 0)
\]
and
\[
y=(y_0, \ldots, y_k, 0, \ldots,0, y_{k+1}, \ldots, y_n).
\]
We define
\[
z=(x_0+\nu, x_1, \ldots, x_k, 0, \ldots, 0)
\]
for 
\[
\nu=\sum_{i>k}^n x_i.
\]
Clearly, \(z\in \Delta \cap \Delta'\), as well as
\[
\abs{x-z}^2=\nu^2+\sum_{i>k}^n x_i^2,
\]
\[
\abs{y-z}^2=(y_0-z_0)^2+\sum_{i=1}^k (x_i-y_i)^2+\sum_{i>k}^n y_i^2.
\]
Since
\[
\nu^2=\big(\sum_{i>k}^n x_i\big)^2\leq (n-k)\sum_{i>k}^n x_i^2
\]
and
\[
(y_0-z_0)^2=((y_0-x_0)-\nu)^2\leq 2(x_0-y_0)^2+2 \nu^2,
\]
we find that
\[
\abs{x-z}^2+\abs{z-y}^2 \leq 2\sum_{i=0}^k (x_i-y_i)^2+(m+2m+1)\sum_{i>k}^n x_i^2+\sum_{i>k}^n y_i^2.
\]
for \(m=n-k\). Notice that \(1\leq m \leq n\). Since
\[
(\abs{x-z}+\abs{z-y})^2 \leq 2(\abs{x-z}^2+\abs{z-y}^2)
\]
and
\[
\abs{x-y}^2=\sum_{i=0}^k (x_i-y_i)^2+\sum_{i>k}^n x_i^2+\sum_{i>k}^n y_i^2,
\]
the desired estimate follows. (In fact, \(4\) could even be replaced by \(2\sqrt{2}\).)
\end{proof}

If a simplicial complex \(\Sigma\) is path-connected, then the associated length metric on \(\Sigma\) will be called the length metric.  It is not difficult to construct simplicial complexes that are unbounded with respect to the length metric.
Hence, in general, the length metric and the \(\ell_2\)-metric are clearly not bi-Lipschitz equivalent. However, by a straightforward use of the lemma above, we find that this is true whenever \(\Sigma\) is pure \(n\)-dimensional and consists of finitely many \(n\)-simplices. Before stating this explicitly let us briefly fix some terminology regarding the dimension of simplicial complexes.

For any \(n\geq 0\) we let \(\Sigma^{(n)}\) denote the \(n\)-skeleton of \(\Sigma\), that is, the union of all \(k\)-simplices \(\Delta\subset \Sigma\) with \(0\leq k\leq n\). We say that \(\Sigma\) is \(n\)-dimensional provided that \(\Sigma^{(n)}=\Sigma^{(n+1)}\). Moreover, \(\Sigma\) is called pure \(n\)-dimensional if every of its simplices is contained in some \(n\)-simplex of \(\Sigma\).

\begin{lemma}\label{lem:length-metric-vs-l2-metric}
Let \(\Sigma\) be a pure \(n\)-dimensional simplicial complex, for some \(n\geq 2\), such that the number of \(n\)-simplices is bounded by \(N\). Then \(\Sigma\) equipped with the \(\ell_2\)-metric is 
\[
N^{10\cdot \log(n)}
\]
quasiconvex whenever it is connected.
\end{lemma}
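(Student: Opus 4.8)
The plan is to join any two points \(x,y\in\Sigma\) by an explicit piecewise-linear path whose \(\ell_2\)-length is at most \(N^{10\cdot\log(n)}\,\abs{x-y}\), distinguishing a ``short-distance'' and a ``long-distance'' regime. I would first dispose of the degenerate case where \(\Sigma\) has at most one \(n\)-simplex: by purity every point then lies in that \(n\)-simplex, so \(\Sigma\) equals it, and since an \(n\)-simplex is convex in \(\ell_2(I)\) the length metric coincides with the \(\ell_2\)-metric and the constant is \(1\). So assume \(\Sigma\) has at least two (hence at most \(N\), and in particular \(N\ge2\)) distinct \(n\)-simplices. The structural input I would establish up front is that the adjacency graph on the \(n\)-simplices of \(\Sigma\) --- vertices the \(n\)-simplices, edges the pairs with nonempty intersection --- is connected. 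This is exactly where connectedness of \(\Sigma\) is used: if this graph split into components, then the unions of the (closed) \(n\)-simplices in each component would be finitely many nonempty closed sets, pairwise disjoint (two \(n\)-simplices in distinct components cannot meet, or else they would be adjacent) and covering \(\Sigma\) by purity, contradicting connectedness. Consequently any two \(n\)-simplices of \(\Sigma\) are joined by a chain \(\Delta_0,\dots,\Delta_m\) of \(n\)-simplices with \(m+1\le N\) and \(\Delta_{k-1}\cap\Delta_k\neq\varnothing\) for every \(k\).

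Now fix \(x,y\in\Sigma\) and, using purity, pick \(n\)-simplices \(\Delta\ni x\) and \(\Delta'\ni y\) with vertex sets \(J,J'\subset I\). In the short-distance regime \(\abs{x-y}<(n+1)^{-1/2}\), I would argue that \(\Delta\) and \(\Delta'\) must intersect: writing \(x\) in coordinates (supported on \(J\)) and \(y\) (supported on \(J'\)), from \(\sum_{j\in J'}y_j=1\) and \(\abs{\sum_{j\in J'}(x_j-y_j)}\le\sqrt{n+1}\,\abs{x-y}\) one gets \(\sum_{j\in J\cap J'}x_j=\sum_{j\in J'}x_j>0\), so \(J\cap J'\neq\varnothing\) and the associated vertex lies in \(\Delta\cap\Delta'\). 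Lemma~\ref{lem:simplicial-geometry} then provides \(z\in\Delta\cap\Delta'\) with \(\abs{x-z}+\abs{z-y}\le4\sqrt{n}\,\abs{x-y}\), and the concatenation of the segments \([x,z]\subset\Delta\) and \([z,y]\subset\Delta'\) is a path in \(\Sigma\) of length at most \(4\sqrt{n}\,\abs{x-y}\). In the long-distance regime \(\abs{x-y}\ge(n+1)^{-1/2}\), I would take a chain \(\Delta=\Delta_0,\dots,\Delta_m=\Delta'\) as above, choose \(p_k\in\Delta_{k-1}\cap\Delta_k\), and concatenate \([x,p_1],[p_1,p_2],\dots,[p_{m-1},p_m],[p_m,y]\); each of these at most \(N\) segments lies in a single convex simplex and has length at most \(\sqrt2=\diam_{\ell_2}(\Sigma)\), so the path has length at most \(N\sqrt2\le N\sqrt{2(n+1)}\,\abs{x-y}\).

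Combining the two regimes shows that \(\Sigma\) is \(\max\{4\sqrt n,\ N\sqrt{2(n+1)}\}\)-quasiconvex, and it only remains to note that for \(N\ge2\) and \(n\ge2\) this number is at most \(N^{10\cdot\log(n)}\) --- a comfortable elementary estimate, since \(N^{10\cdot\log(n)}\ge2^{10\cdot\log(n)}\); in fact the argument gives the much better constant \(O(N\sqrt n)\). I expect the only genuinely delicate point to be the topological reduction in the first paragraph --- that connectedness of \(\Sigma\), together with purity and the finiteness of the set of \(n\)-simplices, forces the adjacency graph to be connected; the short-distance estimate is a one-line coordinate computation, and the long-distance estimate is immediate once that reduction is in hand.
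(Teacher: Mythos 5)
Your proof is correct, and it takes a genuinely different and in fact simpler route than the paper. The paper runs a dyadic recursion on the chain $\Delta_0,\dots,\Delta_m$: pad to length a power of two, find a good waypoint $z\in\Delta_{m'}\cap\Delta_{m'-1}$ at the midpoint via a law-of-cosines angle estimate, and recurse on each half, so the detour constant compounds as $(4\sqrt n)^{\log_2(m+1)}\le(4\sqrt n)^{\log_2 N}=N^{\log_2(4\sqrt n)}$, which is how the $N^{\Theta(\log n)}$ shape of the bound arises. You avoid recursion entirely by splitting into scales: if $\abs{x-y}<(n+1)^{-1/2}$, a one-line Cauchy--Schwarz argument forces the containing $n$-simplices to share a vertex, and Lemma~\ref{lem:simplicial-geometry} gives a two-segment path of length $\le 4\sqrt n\,\abs{x-y}$; if $\abs{x-y}\ge(n+1)^{-1/2}$, the naive chain path of at most $N$ unit-simplex segments already has length $\le N\sqrt 2\le N\sqrt{2(n+1)}\,\abs{x-y}$. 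This yields the quasiconvexity constant $\max\{4\sqrt n,\,N\sqrt{2(n+1)}\}=O(N\sqrt n)$, which is polynomial in both $N$ and $n$ and hence considerably sharper than the paper's $N^{10\log n}$; comfortably below the stated bound, as you note. The explicit remark that connectedness of $\Sigma$ together with purity and finiteness of the $n$-simplex set forces the simplex adjacency graph to be connected is a point the paper leaves implicit, and your justification of it is correct.
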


\begin{proof}
Let \(x\), \(y\in \Sigma\) and suppose \(\Sigma\) is connected. Then there exists a chain of \(n\)-simplices \(\Delta_0, \ldots, \Delta_m\subset \Sigma\) such that \(x\in \Delta_0\) and \(y\in \Delta_m\), and consecutive simplices meet. As we do not require the simplices to be distinct, we may suppose that \(m+1=2^{\ell}\) for \(\ell\geq 1\). We claim that for \(m'=2^{\ell-1}\) there exists \(z\in \Delta_{m'}\cap \Delta_{m'-1}\) such that
\begin{equation}\label{eq:inductive-step}
    \abs{x-z}+\abs{z-y} \leq c\cdot \abs{x-y},
\end{equation}
where \(c=4\cdot \sqrt{n}\). Thus, in particular, by applying this construction iteratively \(\ell\)-times,  we find that there exist \(x_i\in \Delta_i \cap \Delta_{i-1}\), for \(i=1, \ldots, m\), such that
\[
\sum_{i=1}^{2^{\ell}} \abs{x_{i-1}-x_i}\leq c \cdot\sum_{i=1}^{2^{\ell-1}}\abs{x_{2(i-1)}-x_{2i}} \leq \dotsm \leq c^\ell \cdot \abs{x_0-x_{m+1}},
\]
where \(x_0=x\) and \(x_{m+1}=y\). This directly implies that \(\Sigma\) is \(c^{3\cdot \log(N)}\)-quasiconvex, as desired. Notice that \eqref{eq:inductive-step} is true if \(m=1\) due to Lemma~\ref{lem:simplicial-geometry}. We now suppose \(m\geq 2\) and that \(\Delta_0\) and \(\Delta_{m}\) have no points in common. Without loss of generality, we may suppose that
\(x\), \(y\in \R^{2(n+1)+1}\) with
\[
x=(x^0, \ldots, x^n, 0, \ldots, 0, 0)
\]
and
\[
y=(0, \ldots,0, y^{0}, \ldots, y^n, 0),
\]
as well as that at least one vertex of \(\Delta_{m'}\cap \Delta_{m'-1}\) is contained in \(\R^{2(n+1)+1}\). Let \(z\in \Delta_{m'}\cap \Delta_{m'-1}\) be such a vertex.
We consider the triangle \(x z y\) in \(\R^{2(n+1)+1}\). Notice that if \(z\in \Delta_0\), then assuming that \(z=(1, 0, \ldots, 0)\) we have 
\begin{align*}
\cos\big( \angle \,x z y\big)&=\frac{\langle x-z, y-z \rangle}{\abs{x-z}\cdot \abs{y-z}} \\
&=\frac{1-x^0}{\Big((1-x^0)^2+\sum_{i>0} (x^i)^2\Big)^{1/2}}\cdot \frac{1}{\Big(1+\sum_{i\geq 0} (y^i)^2\Big)^{1/2}},
\end{align*}
which implies that
\[
\cos\big( \angle \,x z y\big) \leq \Big(1+\sum_{i\geq 0} (y^i)^2\Big)^{-1/2}\leq \Big(1-\frac{1}{n+2}\Big)^{1/2}.
\]
On the other hand, if \(z=(0,\ldots, 0, 1)\) (and thus is neither contained in \(\Delta_0\) nor in \(\Delta_m\)) an analogous computation yields the same upper bound
\[
\cos\big( \angle \,x z y\big) \leq \Big(1-\frac{1}{n+2}\Big)^{1/2}.
\]
Let \(\eta\), \(\kappa\) be positive real numbers such that \(\eta^2=1-1/(n+2)\) and \(\kappa^2=(1-\eta)/2\). Observe that  
\[
\kappa \leq \sqrt{\frac{1-\cos(\gamma)}{2}},
\]
where \(\gamma=\angle \,x z y\). Hence, a straightforward computation using the law of cosines reveals that
\[
\kappa \cdot \big( \abs{x-z}+\abs{z-y}\big) \leq \abs{x-y}.
\]
Since \(\kappa^{-1} \leq c\) this implies \eqref{eq:inductive-step}. 
\end{proof}

We conclude this section with the short proof of Theorem~\ref{thm:bilipschitz-triangulation} from the introduction.

\begin{proof}[Proof of Theorem~\ref{thm:bilipschitz-triangulation}]
Let \(A\subset X\) and suppose \(f\colon A \to Y\) is a \(1\)-Lipschitz map to a complete \(\CAT(0)\) space \(Y\). Our objective is to find a Lipschitz extension \(F\colon X\to Y\) of \(f\) which has the desired Lipschitz constant. By \cite[Corollary~4.3]{basso--wenger--young}, \(X\) and \(\Sigma\) equipped with the length metric are \(D\)-bi-Lipschitz equivalent. Moreover, because of Lemma~\ref{lem:length-metric-vs-l2-metric}, it follows that \(\Sigma\) with the length metric is \(N^{10\cdot \log(n)}\)-bi-Lipschitz equivalent to \(\Sigma\) equipped with the \(\ell_2\)-metric. Hence, according to \cite[Lemma~2.4]{basso--2022} it suffices to consider the case where \(A\subset \Sigma\) for \(\Sigma\) equipped with the \(\ell_2\)-metric. But in this case the generalized Kirszbraun theorem \cite[Theorem A]{lang--schroeder} of Lang and Schroeder implies that \(f\colon A \to Y\) admits a \(1\)-Lipschitz extension \(F\colon \Sigma\to Y\). 
\end{proof}

\section{A general Whitney-type extension theorem}\label{sec:general-whitney}
In this section we formulate a more axiomatic approach to Lang and Schlichenmaier's extension theorem. Outsourcing the difficult steps in the theorem as definitions yields a short and conceptually simple proof of a general Lipschitz extension theorem. We will rely on the following two key definitions.

\begin{definition}\label{def:simplicial-target-space}
A metric space \(Y\) is said to be an \((n, C)\)-simplicial extensor if the following holds. Whenever \(\Sigma\) is a simplicial complex of dimension at most \(n\) equipped  with the \(\ell_2\)-metric and \(f\colon \Sigma^{(0)}\to Y\) is a map, then \(f\) admits an extension \(F\colon \Sigma \to Y\) such that
\[
\Lip F|_{\Delta} \leq C \cdot \Lip f|_{\Delta^{(0)}}
\]
for every simplex \(\Delta\subset \Sigma\).
\end{definition}

\begin{definition}\label{def:whitney-covering}
Let \(A\subset X\) be a closed subset of a metric space \(X\) and \(\mathcal{B}=(B_i)_{i\in I}\) a covering of \(X\setminus A\). We say that \(\mathcal{B}\) satisfies \(\Whitney(n, \alpha, \delta, \gamma)\) for an integer \(n\geq 0\), and real numbers \(\alpha, \gamma \geq 1\) and \(0 < \delta \leq  1/2\), if the following holds:
\begin{enumerate}[itemsep=0.75em]
\item  every \(B_i\) satisfies 
\[
\diam{B_i} \leq \alpha\cdot r_i,
\]
where \(r_i=d(B_i, A)\);
\item  for every \(x\in X \setminus A\),
\[
x\in N_{\delta \cdot r_i}(B_i)
\]
for at most \(n+1\) indices \(i\in I\);
\item  every \(B_i\) satisfies
\[
\hd(B_i, A)\leq \gamma\cdot r_i,
\]
where \(\hd(B_i, A)\) denotes the asymmetric Hausdorff distance from \(B_i\) to \(A\).
\end{enumerate}   
\end{definition}

Using Definitions~\ref{def:simplicial-target-space} and \ref{def:whitney-covering} we are able to prove the following general Lipschitz extension theorem in a streamlined way.

\begin{theorem}\label{thm:basso-theorem}
Let \(A\subset X\) be a closed subset of a metric space \(X\) and suppose \(f\colon A \to Y\) is a \(1\)-Lipschitz map to a metric space \(Y\). If \(X\setminus A\) admits a covering satisfying \(\Whitney(n, \alpha, \delta, \gamma)\) and \(Y\) is an \((n, C)\)-simplicial extensor, then \(f\) admits a
\begin{equation}\label{eq:desired-Lipschitz-constant}
100\cdot C\cdot \alpha \cdot \delta^{\,-1} \cdot \gamma \cdot \log_2(n+2)
\end{equation}
Lipschitz extension \(F\colon X\to Y\).
\end{theorem}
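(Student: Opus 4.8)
The plan is to reuse the proof of Theorem~\ref{thm:Lang-Schlichenmaier-explicit} almost verbatim; the one conceptual change is that the role played there by the barycenter map of the \(\NPC\) target --- which was used to extend the vertex assignment \(i\mapsto f(a_i)\) over the nerve complex of a Whitney-type covering --- is now taken over by the hypothesis that \(Y\) is an \((n,C)\)-simplicial extensor. By Kuratowski's embedding we may assume that \(X\) is a Banach space, hence \(1\)-quasiconvex. Fix a covering \((B_i)_{i\in I}\) of \(X\setminus A\) satisfying \(\Whitney(n,\alpha,\delta,\gamma)\), put \(r_i=d(B_i,A)\) --- which is \({}>0\), since controlled diameter forces \(B_i\) to be a single point when \(r_i=0\), impossible as \(A\) is closed --- and set \(U_i=N_{\delta r_i}(B_i)\), an open covering of \(X\setminus A\).

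Exactly as in Lemma~\ref{lem:partition-of-unity}, but invoking \emph{bounded multiplicity} with the bound \(n+1\) and running the Johnson--Lindenstrauss--Schechtman trick \(\psi_i=d(\,\cdot\,,X\setminus U_i)^m\) with exponent \(m=\log_2(n+2)\geq1\), I would produce a Lipschitz partition of unity \((\phi_i)_{i\in I}\) subordinate to \((U_i)_{i\in I}\) satisfying \(\sum_{i\in I}\Lip\phi_i(x)\leq 4\log_2(n+2)/(\delta r_j)\) for every \(x\in X\setminus A\) and every \(j\in I\) with \(x\in B_j\); the only change in the computation is that H\"older's inequality produces the factor \((n+1)^{1/m}\leq(n+2)^{1/\log_2(n+2)}=2\) in place of the earlier \(e\). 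Let \(\Sigma\subset\Sigma(I)\) be the nerve complex of \((U_i)_{i\in I}\): by bounded multiplicity every simplex of \(\Sigma\) has at most \(n+1\) vertices, so \(\dim\Sigma\leq n\), and \(\Phi\colon X\setminus A\to\Sigma\), \(\Phi(x)=\sum_{i\in I}\phi_i(x)e_i\), is a well-defined continuous map into \(\Sigma\) equipped with the \(\ell_2\)-metric. Choosing \(a_i\in A\) with \(d(a_i,B_i)\leq(1+\varepsilon_0)r_i\) for a small fixed \(\varepsilon_0>0\), defining \(g\colon\Sigma^{(0)}\to Y\) by \(g(e_i)=f(a_i)\), and applying the \((n,C)\)-simplicial extensor property yields an extension \(G\colon\Sigma\to Y\) with \(\Lip G|_{\Delta}\leq C\cdot\Lip g|_{\Delta^{(0)}}\) for every simplex \(\Delta\subset\Sigma\). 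I then set \(F=f\) on \(A\) and \(F=G\circ\Phi\) on \(X\setminus A\).

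It remains to check that \(F\) is a continuous extension of \(f\) and that \(\Lip F(x)\leq L:=100\cdot C\alpha\delta^{-1}\gamma\log_2(n+2)\) for all \(x\in X\setminus A\), whereupon Lemma~\ref{lem:great-simplification} finishes the proof (note \(L\geq1\) as \(C,\alpha,\gamma\geq1\) and \(\delta^{-1}\geq2\)). The algebraic heart of the matter is that whenever \(\Phi(x)\) and \(\Phi(x')\) lie in a common simplex \(\Delta\) of \(\Sigma\),
\[
d(F(x),F(x'))\leq\Lip G|_{\Delta}\cdot|\Phi(x)-\Phi(x')|\leq\frac{C}{\sqrt2}\,D(x,x')\sum_{i\in I}|\phi_i(x)-\phi_i(x')|,
\]
where \(D(x,x')=\max\{d(f(a_k),f(a_l)):e_k,e_l\in\Delta^{(0)}\}\); here one uses \(\Lip g|_{\Delta^{(0)}}=\tfrac{1}{\sqrt2}\max_{k\neq l}d(f(a_k),f(a_l))\), since the vertices of \(\Sigma(I)\) are at \(\ell_2\)-distance \(\sqrt2\), together with \(|\Phi(x)-\Phi(x')|\leq\sum_{i\in I}|\phi_i(x)-\phi_i(x')|\). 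This is precisely the analogue --- with the harmless factor \(C/\sqrt2\) in place of \(1/2\) --- of the bound the \(\NPC\) proof obtained from Lemma~\ref{lem:barycentric-properties}, so from here the estimates are identical to those in the proof of Theorem~\ref{thm:Lang-Schlichenmaier-explicit}: \emph{controlled diameter} gives \(d(x',a_i)\leq(1+\varepsilon_0+\alpha+\delta)r_i\) whenever \(\phi_i(x')>0\); \emph{controlled distance to domain} (so \(d(x,A)\leq\hd(B_j,A)\leq\gamma r_j\) for \(x\in B_j\)) together with \((1-\delta)r_i\leq d(x',A)\) bounds \(\limsup_{x'\to x}D(x,x')\leq 2(1+\varepsilon_0+\alpha+\delta)\gamma r_j/(1-\delta)\); multiplying this by the partition-of-unity bound the factors \(r_j\) cancel, and using \(\delta\leq\tfrac12\) and \(\varepsilon_0\) small enough (so that \(1+\varepsilon_0+\alpha+\delta\leq3\alpha\), as \(\alpha\geq1\)) one lands below \(L\). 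Continuity at a point \(a\in A\) uses the same two inequalities: as \(x\to a\) every \(r_i\) with \(\phi_i(x)>0\) tends to \(0\), hence \(d(F(x),f(a_i))\leq C\max_{k,l}d(f(a_k),f(a_l))\to0\) and \(d(f(a_i),f(a))\leq d(a_i,a)\to0\).

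The only genuinely non-routine step is the conceptual one: recognizing that ``extend a vertex map over the nerve complex with control on each simplex'' is exactly the content of Definition~\ref{def:simplicial-target-space}, and that the single-simplex inequality \(\Lip G|_{\Delta}\leq C\Lip g|_{\Delta^{(0)}}\) slots into the proof of Theorem~\ref{thm:Lang-Schlichenmaier-explicit} in place of Lemma~\ref{lem:barycentric-properties}. After that it is bookkeeping, though two points deserve care: (i) that \(\Phi(x)\) and \(\Phi(x')\) do lie in a common simplex of \(\Sigma\) for \(x'\) near \(x\), which needs \(I(x)=\{i:\phi_i(x)>0\}\) to be finite (a consequence of bounded multiplicity) and the standard fact that \(I(x)\subseteq I(x')\) for \(x'\) close to \(x\); and (ii) the cancellation of the \(r_j\), which forces the simultaneous use of all three Whitney conditions --- bounded multiplicity for the partition of unity and for \(\dim\Sigma\leq n\), controlled diameter to tie \(a_i\) to \(x'\), and controlled distance to domain so that \(\max_{i\in I(x')}r_i\lesssim r_j\) --- and is the reason the final constant depends on \(n\) only through \(\log_2(n+2)\). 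One should also replace, for \(x'\) near \(x\), the sum \(\sum_{i\in I}|\phi_i(x)-\phi_i(x')|\) by the finite sum \(2\sum_{i\in I(x)}|\phi_i(x)-\phi_i(x')|\) before passing to the limit, to sidestep infinite sums of \(\limsup\)s; this costs only a factor \(2\), comfortably within the slack in the constant \(100\).
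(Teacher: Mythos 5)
Your proposal is correct and follows essentially the same route as the paper: transplant the proof of Theorem~\ref{thm:Lang-Schlichenmaier-explicit} wholesale, replacing the barycenter-map step by the $(n,C)$-simplicial-extensor hypothesis, use the Johnson--Lindenstrauss--Schechtman partition of unity with the improved multiplicity $n+1$ to get the $\log_2(n+2)$ factor, and then combine all three Whitney conditions to cancel the $r_j$. The only organizational difference is that you invoke Lemma~\ref{lem:great-simplification} (checking continuity at $A$ plus the pointwise bound on $X\setminus A$), whereas the paper bounds $\Lip F(a)$ for $a\in A$ directly and appeals to Lemma~\ref{lem:folklore} --- the paper itself notes these two routes are equivalent in difficulty --- and your extra factor of $2$ from restricting to the finite sum over $I(x)$ is a harmless precaution comfortably absorbed in the slack between $72$ and $100$.
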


\begin{proof}
In the following, we use essentially the same proof strategy as in the proof of Theorem~\ref{thm:Lang-Schlichenmaier-explicit} to construct a Lipschitz extension \(F\colon X \to Y\) of \(f\). The restriction of \(F\) to \(
X \setminus A\) will factor through a simplicial complex \(\Sigma\) by definition. To construct this complex, let \((B_i)_{i\in I}\) be a covering of \(X\setminus A\) satisfying \(\Whitney(n, \alpha, \delta, \gamma)\). We consider the open sets \(U_i=N_{\delta r_i}(B_i)\) and \(\Sigma\) will be the nerve of the covering.

Using exactly the same reasoning as in the proof of Lemma~\ref{lem:partition-of-unity}, we find a Lipschitz partition of unity \((\phi_i)_{i\in I}\) subordinated to \((U_i)_{i\in I}\) such that the following holds.  For every \(x\in X \setminus A\),
\[
\sum_{i\in I} \Lip \phi_i (x) \leq 6 \cdot\frac{\log_2(n+2)}{\delta r_j},
\]
where \(j\in I\) is any index such that \(x\in B_{j}\). Let us consider the map 
\[
\Phi \colon X\setminus A \to \Sigma(I)
\]
defined by \(\Phi(x)=(\phi_i(x))_{i\in I}\). Notice that the smallest subcomplex \(\Sigma \subset \Sigma(I)\) containing \(\Phi(X \setminus A)\)  has dimension at most \(n\). 

Let \(\epsilon_0>0\) and select for any \(i\in I\) a point \(a_i\in A\) such that \(d(a_i, B_i) \leq (1+\epsilon_0) r_i\). Let \(\psi\colon \Sigma^{(0)}\to Y \) be defined by \(\psi(e_i)=f(a_i)\). Since \(Y\) is an \((n, C)\)-simplicial extensor, there exists an extension \(\Psi\colon \Sigma \to Y\) of \(\psi\) such that
\[
\Lip \Psi|_{\Delta} \leq C \cdot \Lip \psi|_{\Delta^{(0)}}
\]
for every simplex \(\Delta\subset \Sigma\). Now, let \(F\colon X \to Y\) be defined by \(F(x)=\Psi(\Phi(x))\) if \(x\in X\setminus A\) and \(F(x)=f(x)\) otherwise. We claim that \(F\) satisfies
\begin{equation}\label{eq:pointwise-for-F-everywhere}
\Lip F(x) \leq L,
\end{equation}
for all \(x\in X\), where \(L\) is the constant in \eqref{eq:desired-Lipschitz-constant}.  Due to Lemma~\ref{lem:folklore}, this directly implies that \(F\) is \(L\)-Lipschitz as we may assume that \(X\) is a Banach space. We remark that, due to Lemma~\ref{lem:great-simplification}, it would actually suffice to establish that \(F\) is continuous at every point of \(A\), and to check \eqref{eq:pointwise-for-F-everywhere} only for \(x\in X \setminus A\). However, checking the continuity at \(A\) turns out to be as difficult as establishing the stronger property \(\Lip F(a)\leq L\).

Let \(x\in X \setminus A\). Since \(\phi_i(x)=0\) for all but at most \(n+1\) functions \(\phi_i\), and each \(\phi_i\) is continuous, it follows that there exists an open neighborhood \(U\subset X \setminus A\) of \(x\) such that for any \(x'\in U\) we have that \(\Phi(x)\in \Delta(x')\), where \(\Delta(x')\subset \Sigma\) is the smallest simplex containing \(\Phi(x')\).
Hence, letting \(V\) denote the vertices of \(\Delta(x')\), we have
\[
d(F(x), F(x')) \leq \big[C \cdot \Lip \psi|_{V} \big]\cdot \abs{\Phi(x)- \Phi(x')}. 
\]
Notice that \(e_i\in \Sigma(I)\) belongs to \(V\) if and only if \(i\) is an element of \(I(x')=\{ i\in I : \phi_i(x')>0\}\).  For any \(i\), \(j\in I(x')\) we estimate
\begin{align*}
d(\psi(e_i), \psi(e_j))&=d(f(a_i), f(a_j)) \\
&\leq d(a_i, a_j) \leq 2\big[1+\epsilon_0+\alpha+\delta]\cdot \max_{i\in I(x')} r_i.
\end{align*}
Let \(j\in I\) be such that \(x\in B_j\). Then \(j\in I(x')\), and since
\[
(1-\delta)\cdot r_i \leq d(x', A) \leq d(x, x')+d(x, A)
\]
as well as \(d(x, A)\leq \hd(B_j, A) \leq \gamma\cdot r_j\), we arrive at
\[
\max_{i\in I(x')} r_i \leq \frac{\gamma}{1-\delta} r_j+\frac{1}{1-\delta} d(x, x').
\]
Consequently, by putting everything together and using that \(\abs{\, \cdot \,}\leq \abs{\, \cdot\,}_1\), we find that
\begin{equation*}
\Lip F(x)\leq C \cdot \frac{12 \gamma}{\delta (1-\delta)} [1+\epsilon_0+\alpha+\delta] \cdot \log_2(n+2)
\end{equation*}
In particular,
\begin{equation}\label{eq:final-estimate}
\Lip F(x) \leq 72 \cdot C \cdot \alpha \cdot \delta^{\,-1} \cdot \gamma \cdot \log_2(n+2)+C_0\epsilon_0,
\end{equation}
for every \(x\in X\setminus A\), where we used that \(\delta \in (0, 1/2]\). On the other hand, if \(a\in A\) and \(x\in X\setminus A\), then
\[
d(F(x), F(a))\leq d(F(x), \Psi(e_j))+d(a,x)+ d(x, a_j),
\]
where \(j\in I\) is such that \(x\in B_j\). Since
\[
d(F(x), \Psi(e_j))\leq C  \cdot \big[\max_{i, j\in I(x)} \frac{d(f(a_i), f(a_j))}{\sqrt{2}} \big]\cdot \abs{\Psi(x)-e_j},
\]
we find as above that
\[
d(F(x), \Psi(e_j)) \leq C \cdot \frac{2\gamma}{1-\delta}\big[ 1+\epsilon_0+\alpha+\delta\big] r_j.
\]
Clearly, \(r_j \leq d(x, a)\) and \(d(x, a_j)\leq [1+\epsilon_0+\alpha]\cdot r_j\). Therefore, \eqref{eq:final-estimate} also holds if \(x\in A\). Since \(\epsilon_0>0\) was arbitrary this completes the proof.
\end{proof}

\section{Lipschitz $n$-connectedness}\label{sec:lipschitz-n-connectedness}
In practice, it is often necessary to extend Lipschitz maps which are not defined on \(S^{n-1}\subset \R^n\) but on the boundary \(\partial K\) of a convex body \(K\subset \R^n\). For example, in view of Definition~\ref{def:simplicial-target-space} it seems useful to know how to extend maps from the simplicial boundary of a simplex to the whole simplex. In the following, we investigate property \(\LC(n, \lambda)\) not only for \(B^{n+1}\) but also for other convex bodies as domain spaces.

\begin{definition}Let \(K\subset \R^n\) be a convex body, that is, \(K\) is a compact convex set with non-empty interior. A metric space \(Y\) is said to satisfy condition \(\LC(K, \lambda)\) if every \(L\)-Lipschitz map \(f\colon \partial K \to Y\) admits a \(\lambda L\)-Lipschitz extension \(F\colon K \to Y\).
\end{definition}

It is easy to check that \(\LC(K, \lambda)\) is translation and scale invariant, that is, \(\LC(K, \lambda)\) implies \(\LC(v+sK, \lambda)\) for any \(v\in \R^n\) and \(s>0\). Moreover, it also clearly holds that \(Y\) satisfies \(\LC(n, \lambda)\) if and only if it satisfies \(\LC(B^m, \lambda)\) for every \(m=1, \ldots, n+1\). Since each convex body \(K\subset \R^n\) is bi-Lipschitz equivalent to \(B^n\) via a boundary preserving map, it follows that \(\LC(K, \lambda)\) and \(\LC(B^n, \lambda)\) differ only up to a multiplicative constant. The following lemma makes this more precise. 

\begin{lemma}\label{lem:conversion-ball-to-convex-body}
Let \(K\subset \R^n\) be convex body containing the origin in its interior and \(Y\) a metric space. Let \(\lambda\), \(\lambda_K\) be the smallest constants such that \(Y\) satisfies \(\LC(B^n, \lambda)\) and \(\LC(K, \lambda_K)\), respectively. Then 
\[
\frac{r}{R}\, \lambda \leq \lambda_K \leq \Big(\frac{R}{r}\Big)^2 \lambda,
\]
where \(r=\min_{x\in \partial K} \abs{x}\) and \(R=\max_{x\in \partial K} \abs{x}\) denote the inradius and circumradius of \(K\), respectively. 
\end{lemma}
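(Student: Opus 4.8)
The plan is to establish both inequalities by one and the same gluing scheme: over a concentric round ball use the extension property available there, and over the remaining annular region extend the given boundary map by pre-composing it with a radial projection. Throughout I will use that \(\LC(\cdot,\cdot)\) is scale invariant, so \(\LC(B^n,\lambda)\) gives \(\LC(sB^n,\lambda)\) and \(\LC(K,\lambda_K)\) gives \(\LC(sK,\lambda_K)\) for every \(s>0\); the inclusions \(rB^n\subseteq K\subseteq RB^n\) (the first from convexity of \(K\) and \(0\in\operatorname{int}K\)); the Minkowski gauge \(\norm{x}_K=\inf\{t>0:x\in tK\}\), which satisfies \(\abs{x}/R\le\norm{x}_K\le\abs{x}/r\) and is \((1/r)\)-Lipschitz; and the easy fact that \(\lambda,\lambda_K\ge1\). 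The one elementary tool used repeatedly is the gluing principle: if a convex set \(C\) is the union of a closed convex set \(C_1\) and a closed set \(C_2\), and a map is \(L_1\)-Lipschitz on \(C_1\), \(L_2\)-Lipschitz on \(C_2\), and the two definitions agree on \(C_1\cap C_2\), then it is \(\max\{L_1,L_2\}\)-Lipschitz on \(C\) — because the segment between two points of \(C\) lying in different pieces meets \(C_1\cap C_2\).

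\textbf{Lower bound.} Given an \(L\)-Lipschitz map \(g\colon S^{n-1}\to Y\), set \(K'=R^{-1}K\), so that \(\tfrac rR B^n\subseteq K'\subseteq B^n\) and every point of \(\partial K'\) has norm at least \(r/R\). The radial projection \(\pi\colon\partial K'\to S^{n-1}\), \(\pi(y)=y/\abs{y}\), is \(\tfrac Rr\)-Lipschitz, being \((r/R)^{-1}\) times the \(1\)-Lipschitz nearest-point projection onto \(\tfrac rR B^n\). Hence \(g\circ\pi\colon\partial K'\to Y\) is \(\tfrac Rr L\)-Lipschitz; by scale invariance and \(\LC(K,\lambda_K)\) it extends to a \(\tfrac Rr\lambda_K L\)-Lipschitz map \(G'\) on \(K'\). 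On \(B^n\setminus\operatorname{int}K'\) put \(H(x)=g(x/\abs{x})\); since \(\tfrac rR B^n\subseteq K'\) one has \(\abs{x}\ge r/R\) there, so \(H\) is \(\tfrac Rr L\)-Lipschitz and agrees with \(G'\) on \(\partial K'\). The glued map is \(\tfrac Rr\lambda_K L\)-Lipschitz (using \(\lambda_K\ge1\)) and restricts to \(g\) on \(S^{n-1}\); thus \(Y\) satisfies \(\LC(B^n,\tfrac Rr\lambda_K)\), i.e.\ \(\lambda\le\tfrac Rr\lambda_K\).

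\textbf{Upper bound.} Given an \(L\)-Lipschitz map \(f\colon\partial K\to Y\), the relevant concentric ball is now \(rB^n\subseteq K\), and \(rS^{n-1}\) touches \(\partial K\). Let \(\pi_K(x)=x/\norm{x}_K\) be the radial projection onto \(\partial K\). From \(\abs{x}/R\le\norm{x}_K\le\abs{x}/r\) and the \((1/r)\)-Lipschitz bound on \(\norm{\cdot}_K\) one checks that \(\pi_K\) is \((R/r)^2\)-Lipschitz on \(rS^{n-1}\) and on \(K\setminus\operatorname{int}(rB^n)\); this is the only genuine computation. Consequently \(f\circ\pi_K\colon rS^{n-1}\to Y\) is \((R/r)^2L\)-Lipschitz, and scale invariance with \(\LC(B^n,\lambda)\) extends it to an \((R/r)^2\lambda L\)-Lipschitz map \(F'\) on \(rB^n\). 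Setting \(H(x)=f(\pi_K(x))\) on \(K\setminus\operatorname{int}(rB^n)\) — which is \((R/r)^2L\)-Lipschitz there and agrees with \(F'\) on \(rS^{n-1}\) — the glued map \(F\colon K\to Y\) is \((R/r)^2\lambda L\)-Lipschitz, and it equals \(f\) on \(\partial K\) because \(\norm{x}_K=1\) there. Hence \(\lambda_K\le(R/r)^2\lambda\).

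\textbf{Main obstacle.} Everything above is routine except the Lipschitz estimate for \(\pi_K\), and it is precisely here that the two powers of \(R/r\) appear and become asymmetric. In the lower bound the radial projection lands on the \emph{round} sphere \(S^{n-1}\), so after one rescaling it is a nearest-point projection onto a Euclidean ball and costs only a single factor \(R/r\). In the upper bound the projection lands on \(\partial K\), and the shape of \(K\) enters through the gauge inequalities and the Lipschitz bound on \(\norm{\cdot}_K\), producing a second factor \(R/r\). Getting the clean constant \((R/r)^2\) there — rather than a harmless multiple of it — is the step that needs a little care with convexity; I would carry it out by writing \(\pi_K(x)=\norm{x}_K^{-1}x\), splitting \(\pi_K(x)-\pi_K(z)\) into a ``radial'' term and a ``gauge'' term, and bounding each using \(\norm{x}_K\ge\abs{x}/R\) and \(\abs{\,\norm{x}_K-\norm{z}_K\,}\le\abs{x-z}/r\).
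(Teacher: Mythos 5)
Your gluing scheme is the same as the paper's (ball plus annulus, extend over the ball, radially project over the annulus), and your lower bound is fine — in fact the observation that radial projection to the round sphere is \(R/r\) times a nearest-point projection onto \(\tfrac{r}{R}B^n\) is a slightly cleaner way to get the factor \(R/r\) than the paper's unexplained ``clearly.'' The gap is in the step you yourself flag as the one genuine computation: showing that \(\pi_K\) is \((R/r)^2\)-Lipschitz on \(K\setminus\operatorname{int}(rB^n)\). The splitting you outline,
\[
\pi_K(x)-\pi_K(z)=\frac{x-z}{\norm{x}_K}+z\Big(\frac{1}{\norm{x}_K}-\frac{1}{\norm{z}_K}\Big),
\]
together with \(\norm{x}_K\ge\abs{x}/R\ge r/R\), \(\abs{z}/\norm{z}_K=\abs{\pi_K(z)}\le R\), and \(\abs{\norm{x}_K-\norm{z}_K}\le\abs{x-z}/r\), yields only
\[
\abs{\pi_K(x)-\pi_K(z)}\le\Big(\frac{R}{r}+\frac{R^2}{r^2}\Big)\,\abs{x-z},
\]
i.e. \((R/r)(1+R/r)\), not \((R/r)^2\); the ``radial'' term contributes an extra additive \(R/r\) that these gauge inequalities cannot absorb. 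There is no slack elsewhere to recover it: for the rectangle \(K=[-a,a]\times[-b,b]\) with \(a>b\) one has \(r=b\), \(R=\sqrt{a^2+b^2}\), and the Lipschitz constant of the radial projection \(rS^1\to\partial K\) is exactly \((a^2+b^2)/b^2=(R/r)^2\), attained at \(\theta=\arctan(b/a)\). So \((R/r)^2\) is sharp and your estimate overshoots by a factor close to \(1+r/R\).

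The paper gets the clean constant by citing Vrecica's theorem (Theorem 1 of Vrecica, 1981), which states that the radial projection \(S(t)\to\partial K\) is \(\tfrac{1}{t}\cdot\tfrac{R^2}{r}\)-Lipschitz; this genuinely uses the convexity of \(K\) beyond the two gauge inequalities you deploy. You correctly anticipated that ``a little care with convexity'' is needed here, but the specific plan you sketch is the naive triangle-inequality one and does not close the gap. As written, your argument proves the lemma only with \((R/r)^2\) replaced by \((R/r)+(R/r)^2\); to obtain the stated constant you would need Vrecica's theorem or an equivalent convexity argument.
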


\begin{proof}
To begin, we show the upper bound. In what follows, we will work with scalings of the Euclidean unit ball \(B^n\subset \R^n\). Let \(t>0\) and abbreviate 
\[
B(t)=\{ x\in \R^n : \abs{x} \leq t\},
\]
as well as \(S(t)=\partial B(t)\) and \(U(t)=B(t)\setminus S(t)\). It follows directly from a theorem of Vrecica, see \cite[Theorem 1]{vrecica--1981}, that the radial projection \(S(t) \to \partial K\) is \(\tfrac{1}{t} \cdot \tfrac{R^2}{r}\)-Lipschitz. In particular, this implies that the radial projection \(\rho\colon K\setminus \{0\} \to \partial K\) is \((\tfrac{R}{r})^2\)-Lipschitz on \(K \setminus U(r)\). 

Let \(f\colon \partial K \to Y\) be an \(L\)-Lipschitz map. We define \(F\colon K\to Y\) by gluing together two maps \(F_1\) and \(F_2\). Let \(F_1\colon K \setminus U(r) \to Y\) be the restriction of \(f\circ \rho\) to \(K \setminus U(r)\). By the above, we find that this defines an \((\tfrac{R}{r})^2 L\)-Lipschitz mapping. On the other hand, since \(Y\) satisfies \(\LC(B(r), \lambda)\), it follows that \(f\circ \rho\) restricted to \(S(r)\) admits a \(\lambda (\tfrac{R}{r})^2 L\)-Lipschitz extension \(F_2\colon B(r) \to Y\). Since \(F_1\) and \(F_2\) agree on the sphere \(S(r)\), it is easy to check that  \(F\colon K \to Y\), which is obtained by pasting together \(F_1\) and \(F_2\), is \((\tfrac{R}{r})^2\lambda L\)-Lipschitz. 

To finish the proof, we show the lower bound using the same methods. Let \(f\colon S(1)\to Y\) be an \(L\)-Lipschitz map. Notice that \(\tfrac{1}{R} K\subset B(1)\). Clearly, the radial projection \(B(1)\setminus \{0\}\to S(1)\) is \(\tfrac{R}{r}\)-Lipschitz on \(B(1)\setminus U(\tfrac{r}{R})\) and thus in particular on \(B(1)\setminus (\tfrac{1}{R} K)^\circ\). Thus, since \(Y\) satisfies \(\LC( \tfrac{1}{R} K, \lambda_K)\), with exactly the same reasoning as above, we find that \(f\) admits an \(\tfrac{R}{r} \lambda_K L\)-Lipschitz extension.
\end{proof}

For each \(n\geq 0\) we let
\begin{equation}\label{eq:simplex-def-2}
\Delta^n=\Big\{ x\in \R^{n+1} : x_i \geq 0 \text{ and } \sum_{i=1}^{n+1} x_i=1 \Big\}    
\end{equation}
denote the standard Euclidean \(n\)-simplex. We will always consider \(\Delta^n\) with the induced Euclidean metric \(d(x, y)=\abs{x-y}\). The following result is a direct consequence of the above lemma, as \(\Delta^n\) is isometric to a regular \(n\)-simplex in \(\R^n\) (whose side length is \(\sqrt{2}\)). 

\begin{lemma}\label{lem:special-case}
Let \(Y\) be a metric space that satisfies \(\LC(n-1, \lambda)\) for some \(n\geq 1\). Then every \(L\)-Lipschitz map \(f\colon \partial \Delta^n \to Y\) admits an \(n^2 \lambda L\)-Lipschitz extension \(F\colon \Delta^n \to Y\).
\end{lemma}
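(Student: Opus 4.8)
The plan is to apply Lemma~\ref{lem:conversion-ball-to-convex-body} to the convex body $K = \Delta^n$, realized as a regular $n$-simplex sitting inside the affine hyperplane $\{x \in \R^{n+1} : \sum x_i = 1\} \cong \R^n$, and to bound the ratio $R/r$ of its circumradius to its inradius. First I would record that $\Delta^n$, with the Euclidean metric it inherits as a subset of $\R^{n+1}$, is isometric to a regular $n$-simplex of edge length $\sqrt{2}$ in $\R^n$; I should be a bit careful here since $\Delta^n$ as written is an $n$-dimensional set living in $\R^{n+1}$, so "convex body in $\R^n$" means after identifying its affine hull with $\R^n$ — the point is that the intrinsic Euclidean geometry is all that matters, and Lemma~\ref{lem:conversion-ball-to-convex-body} is stated intrinsically via inradius and circumradius. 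The center of $\Delta^n$ is the barycenter $(\tfrac{1}{n+1}, \ldots, \tfrac{1}{n+1})$, and I would place the origin there so that the lemma applies.

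Next I would compute the inradius $r$ and circumradius $R$ of the regular $n$-simplex with vertices $e_1, \ldots, e_{n+1}$ and barycenter $b = \tfrac{1}{n+1}\sum e_i$. The circumradius is $R = |e_1 - b| = \sqrt{\tfrac{n}{n+1}}$, since $e_1 - b$ has one coordinate equal to $\tfrac{n}{n+1}$ and $n$ coordinates equal to $-\tfrac{1}{n+1}$, giving $R^2 = \tfrac{n^2 + n}{(n+1)^2} = \tfrac{n}{n+1}$. For the inradius, the facet opposite $e_{n+1}$ lies in the hyperplane $\{x_{n+1} = 0\}$ intersected with the affine hull, and the distance from $b$ to this facet works out to $r = R/n = \tfrac{1}{n}\sqrt{\tfrac{n}{n+1}}$ (a standard fact: for a regular simplex the inradius is $1/n$ times the circumradius). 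Hence $R/r = n$, and Lemma~\ref{lem:conversion-ball-to-convex-body} gives $\lambda_K \le (R/r)^2 \lambda = n^2 \lambda$. Since $Y$ satisfying $\LC(n-1,\lambda)$ is the same as $Y$ satisfying $\LC(B^n, \lambda)$ (as $B^n \subset \R^n$ and $\LC(n-1,\lambda)$ unwinds to $\LC(B^m,\lambda)$ for $m = 1, \ldots, n$, in particular $m = n$), the constant $\lambda$ in the lemma is exactly the relevant one, and we conclude that every $L$-Lipschitz map $\partial\Delta^n \to Y$ extends to an $n^2\lambda L$-Lipschitz map $\Delta^n \to Y$.

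The main obstacle — really the only nontrivial point — is getting the inradius and circumradius of the regular $n$-simplex right and making sure the identification "$\Delta^n$ as a convex body in $\R^n$" is legitimate, i.e.\ that Lemma~\ref{lem:conversion-ball-to-convex-body} can be quoted even though $\Delta^n$ is presented inside $\R^{n+1}$. Both are routine: the first is a classical computation (or one can cite that for a regular simplex $R = n\cdot r$), and the second is immediate because both the hypothesis $\LC(K,\lambda_K)$ and the conclusion only involve the intrinsic metric on $K$ and on $\partial K$, which is preserved under the affine isometry from the affine hull of $\Delta^n$ onto $\R^n$. I would therefore write the proof as: identify $\Delta^n$ with a regular $n$-simplex in $\R^n$ centered at the origin, note $R/r = n$, apply Lemma~\ref{lem:conversion-ball-to-convex-body}, and observe that $\LC(n-1,\lambda)$ supplies precisely the constant $\lambda$ for $\LC(B^n,\lambda)$ needed there.
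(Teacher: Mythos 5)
Your proof is correct and takes precisely the same route as the paper: identify $\Delta^n$ isometrically with a convex body $K \subset \R^n$ via the affine hull, compute $r = \frac{1}{\sqrt{n(n+1)}}$ and $R = \sqrt{n/(n+1)}$ so that $R/r = n$, and apply Lemma~\ref{lem:conversion-ball-to-convex-body} to get $\lambda_K \le n^2\lambda$. The only cosmetic difference is that you derive the inradius from the identity $r = R/n$ for a regular simplex, while the paper simply states the value directly.
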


\begin{proof}
We write \(H\) for the affine hyperplane \(\{x\in \R^{n+1} : \sum_{i=1}^{n+1} x_i =1\}\) and consider it with the metric induced by \(\R^{n+1}\). Fix an isometry \(\phi\colon H \to \R^n\). Then \(K=\phi(\Delta^n)\) is a convex body with inradius 
\[
r=\frac{1}{\sqrt{n(n+1)}}
\]
and circumradius \(R=\sqrt{n/(n+1)}\). Hence, the existence of the desired Lipschitz extension \(F\colon \Delta^n \to Y\) follows directly from Lemma~\ref{lem:conversion-ball-to-convex-body}. 
\end{proof}

\subsection{Explicit constants for \(\NPC\) spaces } Prime examples of metric spaces that are Lipschitz \(n\)-connected for every \(n\geq 0\) are complete metric spaces of generalized non-positive curvature. A straightforward argument due to Schlichenmaier \cite[Proposition 6.2.2]{schlichenmaier--2005} shows that such spaces satisfy \(\LC(n, 3)\) for any \(n\). In other words, they are Lipschitz \(n\)-connected for every \(n\) with a constant that does not depend on \(n\). 

Schlichenmaier's argument makes heavy use of the fact that complete \(\NPC\) spaces admit a conical bicombing, a suitable collection of geodesics in a metric space. We proceed by recalling the definition of this notion, see \cite{lang-descombes--2015, basso2020extending} for more information. A map 
\[
\sigma\colon X\times X \times [0,1] \to X
\]
is called a bicombing, if for all \(x\), \(y\in X\), the path \(\sigma_{xy}(\cdot)=\sigma(x, y, \cdot)\) is a metric geodesic connecting \(x\) to \(y\), that is, \(\sigma_{xy}(0)=x\), \(\sigma_{xy}(1)=y\) and \(d(\sigma_{xy}(s), \sigma_{xy}(t))=\abs{s-t}\cdot d(x, y)\) for all \(x\), \(y\in X\). Essentially, a bicombing is a collection of distinguished geodesic in metric space. A useful example to keep in mind is the bicombing on a Banach given by oriented linear segments. Following Descombes and Lang \cite{lang-descombes--2015}, we say that \(\sigma\) is a conical bicombing if
\[
d(\sigma_{xy}(t), \sigma_{x' y'}(t))\leq (1-t)\cdot d(x, x')+t\cdot d(y, y')
\]
for all \(x\), \(y\), \(x'\), \(y'\in X\) and all \(t\in [0,1]\). By a well-known construction going back to Menger \cite[Section 6]{menger--1928}, it follows directly from the definition that complete metric spaces of generalized non-positive curvature admit conical bicombings. See \cite[Section 5]{basso2024noncompactconvexhullgeneralized} for the details. By a refined analysis of Schlichenmaier's arguments, we show in the following that the constant \(3\) from above can be improved to \(\sqrt{3}\). For each non-negative integer \(n\) we define
\[
c_{n}=\frac{1}{\Vol(S^n)}\int_{S^n} \abs{x-q}\, \mathrm{d}x, 
\]
where \(q\) denotes an arbitrary point of \(S^n\). For example, \(c_1=4/\pi\) and \(c_2=4/3\).

\begin{proposition}\label{prop:one}Every complete \(\NPC\) space satisfies \(\LC(B^{n+1}, \lambda_{n})\) for 
\[
\lambda_{n}=\sqrt{1+c_{n}^2}.
\]
In particular, it follows that such spaces are Lipschitz \(n\)-connected with constant \(\sqrt{3}\) for every \(n\).
\end{proposition}

The Lipschitz extensions of \(f\colon S^n \to Y\) that we construct in the proof of Proposition~\ref{prop:one} are of the following type. We say that 
\(F\colon B^{n+1}\to Y\) is a conical extension of \(f\) if there exist a point \(p\in Y\) and a conical bicombing \(\sigma\) on \(Y\), such that \(F(0)=p\) and
\begin{equation}\label{eq:conical-extension}
F(x)=\sigma\big(p, f\bigr(\tfrac{x}{\abs{x}}\bigl), \abs{x}\big),
\end{equation}
for all \(x\in B^{n+1}\setminus\, 0\). It is easy to check that if \(p\in f(S^n)\) then \(\Lip F \leq 3\cdot \Lip f\). We show in the following lemma that the Lipschitz constant of \(F\) depends only on the maximal distance \(R\) between \(p\) and a point contained in \(f(S^n)\).

\begin{lemma}\label{prop:ext1}
Let \(Y\) be a metric space admitting a conical bicombing \(\sigma\) and suppose that \(f\colon S^n \to Y\) is a \(1\)-Lipschitz map.
Fix \(p\in Y\) and define 
\[
R=\sup_{x\in S^n} d(p, f(x)).
\]
Then \(F\colon B^{n+1}\to Y\) defined as in \eqref{eq:conical-extension} satisfies
\begin{equation}\label{eq:lip1}
\Lip F \leq \sqrt{1+R^2}.
\end{equation} 
Moreover, if \(Y=(\mathcal{P}_1(S^n), W_1)\), for \(n\geq 1\), and \(f\colon S^n \to Y\) is defined by \(f(x)=\delta_x\), then \eqref{eq:lip1} becomes an equality. 
\end{lemma}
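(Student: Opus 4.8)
The plan is to estimate $\Lip F(x)$ pointwise for $x \in B^{n+1}\setminus 0$ and then invoke Lemma~\ref{lem:folklore} (noting $B^{n+1}$ is $1$-quasiconvex) to conclude that $F$ is $\sqrt{1+R^2}$-Lipschitz. Fix $x$ with $0<\abs{x}\le 1$ and write $x = t u$ with $t = \abs{x}\in(0,1]$ and $u = x/\abs{x}\in S^n$, so that $F(x) = \sigma(p, f(u), t)$. For a nearby point $x' = t' u'$ I would split the displacement into a radial part and an angular part: by the conical bicombing inequality,
\[
d\bigl(\sigma(p, f(u), t), \sigma(p, f(u'), t')\bigr) \le (1-s)\cdot 0 + s\cdot d(f(u), f(u')) \quad\text{won't quite work directly,}
\]
so instead I would interpolate through the intermediate point $\sigma(p, f(u), t')$. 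The triangle inequality gives
\[
d(F(x), F(x')) \le d\bigl(\sigma(p,f(u),t), \sigma(p,f(u),t')\bigr) + d\bigl(\sigma(p,f(u),t'), \sigma(p,f(u'),t')\bigr).
\]
The first term is exactly $\abs{t-t'}\cdot d(p, f(u)) \le \abs{t-t'}\cdot R$ since $\sigma_{p,f(u)}$ is a geodesic. The second term is bounded, via the conical inequality with the two geodesics $\sigma_{p,f(u)}$ and $\sigma_{p,f(u')}$ sharing initial point $p$, by $(1-t')\cdot 0 + t'\cdot d(f(u), f(u')) \le t' \cdot d(f(u),f(u'))$, and since $f$ is $1$-Lipschitz and $t'\le 1$ this is at most $d_{S^n}(u, u')$, the spherical (geodesic) distance.

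Now the key quantitative step: I need to compare $\abs{t-t'}$ and $d_{S^n}(u,u')$ with $\abs{x - x'}$. Writing $x = tu$, $x' = t'u'$, a direct computation in $\R^{n+1}$ shows that, to first order as $x'\to x$, the displacement decomposes orthogonally into the radial increment $dt$ (with length $\abs{dt}$) and the angular increment $t\, d\theta$ (with length $t \cdot d_{S^n}(u,u')$ infinitesimally), so $\abs{x-x'}^2 \approx (dt)^2 + t^2 (d\theta)^2$. Hence the pointwise Lipschitz bound becomes
\[
\Lip F(x) \le \limsup_{x'\to x}\frac{R\,\abs{dt} + d_{S^n}(u,u')}{\abs{x-x'}} \le \limsup \frac{R\,\abs{dt} + t^{-1}\cdot t\, d\theta}{\sqrt{(dt)^2 + t^2(d\theta)^2}}.
\]
Here the angular term $d_{S^n}(u,u')$ equals $t^{-1}$ times the arclength $t\,d\theta$, which is the quantity appearing under the root. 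By Cauchy--Schwarz applied to the vectors $(R, t^{-1})$ and $(\abs{dt}, t\,d\theta)$, the right-hand side is at most $\sqrt{R^2 + t^{-2}}$, which is unfortunately unbounded as $t\to 0$. So this naive split is too lossy near the origin, and I must be more careful: near $x = 0$ one should instead use the full conical inequality $d(F(x), F(x')) \le \abs{t-t'}\cdot 0$-type bound...

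Let me reconsider. The correct bound near the origin: $d(F(x), F(0)) = d(\sigma(p, f(u), t), p) = t\cdot d(p, f(u)) \le tR = \abs{x}\,R$, and more generally the conical inequality applied to $\sigma_{p, f(u)}$ and $\sigma_{p, f(u')}$ at parameters $t, t'$ directly yields
\[
d(F(x), F(x')) \le (1-t)\wedge\text{nothing}\ldots
\]
— the honest statement is $d(\sigma_{p,f(u)}(t), \sigma_{p,f(u')}(t')) \le \abs{t-t'}\cdot\tfrac{1}{2}(d(p,f(u)) + d(p,f(u'))) + \ldots$; rather than guess the exact algebra I would carry out the clean version: \textbf{the plan is} to bound $d(F(x),F(x'))$ by $|t-t'| R$ plus $\min(t,t')\, d_{S^n}(u,u')$ (the $\min$ coming from applying the conical inequality at the smaller parameter after first moving radially), and then the Cauchy--Schwarz estimate with $(R, 1)$ against $(|t-t'|, d_{S^n}(u,u'))$ — using that the infinitesimal Euclidean length is $\sqrt{(dt)^2 + t^2 d\theta^2} \ge \sqrt{(dt)^2 + (\min(t,t')\,d\theta)^2}$ only when $t$ is bounded below, and handling $t\to 0$ separately via $d(F(x),F(0))\le R|x|$ and continuity — yields $\Lip F(x) \le \sqrt{1+R^2}$ at every $x\ne 0$, and continuity at $0$ plus Lemma~\ref{lem:folklore} finishes it.

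The main obstacle is precisely the behavior near the origin, where the angular rescaling factor $t^{-1}$ blows up; the resolution is that the conical bicombing contracts the angular contribution by a factor of $t$ (not $1$), exactly cancelling this blow-up, so the two effects balance and the $\sqrt{1+R^2}$ bound is uniform. For the sharpness claim with $Y = (\mathcal{P}_1(S^n), W_1)$ and $f(x) = \delta_x$, I would take $p$ to be the uniform (normalized volume) measure on $S^n$ — then $R = \sup_x W_1(p, \delta_x) = \sup_x \int_{S^n}\abs{x-y}\,d\mathrm{vol}(y)/\Vol(S^n) = c_n$ by symmetry — and compute $\Lip F$ along a radial-plus-tangential variation directly using Lemma~\ref{lem:barycentric-properties} and the explicit Wasserstein formula, checking that Cauchy--Schwarz is tight in the limit; this identifies $\lambda_n = \sqrt{1+c_n^2}$ as claimed and, since $c_n \le \sqrt 2 < \sqrt 3 - \ldots$ — more precisely $1 + c_n^2 \le 3$ — gives the uniform constant $\sqrt 3$ in Proposition~\ref{prop:one}.
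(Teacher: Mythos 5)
The strategy you sketch — decompose the displacement into a radial move along a single geodesic from $p$ and an angular move controlled by the conical inequality, then use Cauchy--Schwarz — is a valid differential version of what the paper does globally, but the proposal as written contains a concrete error and never actually establishes the key inequality.

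The error: in your first attempt you bound the angular contribution $t'\,d(f(u),f(u'))$ by $d_{S^n}(u,u')$, i.e.\ you discard the factor $t'$. That is precisely what produces the spurious $\sqrt{R^2+1/t^2}$ blow-up, because you then feed $(R, 1/t)$ into Cauchy--Schwarz against $(|dt|, t\,d\theta)$. You subsequently diagnose the fix in words (``the conical bicombing contracts the angular contribution by a factor of $t$''), but you never redo the computation with the factor retained. If you keep the bound $\min(t,t')\cdot|u-u'|$, the numerator becomes $R|dt| + r|u-u'| \approx R|dt| + r\,d\theta$ (with $r=\min(t,t')$), and Cauchy--Schwarz with vectors $(R,1)$ and $(|dt|, r\,d\theta)$ immediately gives $\sqrt{1+R^2}$ uniformly in $t$ — no separate treatment of $t\to 0$ is needed. (Also note that $f$ is $1$-Lipschitz for the \emph{Euclidean} metric on $S^n\subset\R^{n+1}$, so the relevant distance is the chordal $|u-u'|=2\sin(\phi/2)$, not $d_{S^n}(u,u')=\phi$; using the geodesic distance is an additional loss.) The sharpness paragraph has the same character: you announce that Cauchy--Schwarz is tight in the limit, but do not compute; and the lemma asserts equality for \emph{every} $p\in\mathcal{P}_1(S^n)$, whereas you propose to check it only for the uniform measure.

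What the paper does instead: it avoids the infinitesimal argument altogether. It bounds $d(F(x),F(y))$ for arbitrary $x,y$ by passing through the point $z$ at radius $r=\min(|x|,|y|)$ in the direction of $y$, which yields the explicit quantity $2r\sin(\phi/2)+R(s-r)$ with $s=\max(|x|,|y|)$ and $\phi$ the angle between $x$ and $y$. The inequality $2r\sin(\phi/2)+R(s-r)\le\sqrt{1+R^2}\,|x-y|$ is then proved by squaring and verifying that the resulting quadratic $P(\sin(\phi/2))$ has non-positive leading coefficient and non-positive discriminant $-16(1+R^2)r\varepsilon^3$ (with $\varepsilon=s-r$), hence is $\le 0$; equality in the Wasserstein example is established by taking $y\in S^n$ maximizing $W_1(p,\delta_y)$ and showing that the triangle-inequality bound is achieved there, followed by a limiting analysis of the same quadratic. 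Your pointwise-plus-quasiconvexity route would work once the Cauchy--Schwarz step is done with the $\min(t,t')$ factor in place, but as written the decisive estimate is missing.
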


Proposition~\ref{prop:one} now easily follows from Lemma~\ref{prop:ext1}.

\begin{proof}[Proof of Proposition~\ref{prop:one}]
Let \(Y\) be a complete metric space of generalized non-positive curvature and \(f\colon S^n \to Y\) a Lipschitz map. Since the class of \(\NPC\) spaces is invariant under scaling, we may suppose that \(f\) is \(1\)-Lipschitz. In the following, we construct \(p\in Y\) such that \(d(p, f(q))\leq c_n\) for every \(q\in S^n\). Let \(\rho\) denote the normalized Riemannian volume measure on \(S^n\) and let \(\beta \colon \mathcal{P}_1(Y) \to Y\) be a barycenter map. Furthermore, let \(\mu=f_\ast \rho\) be the push-forward measure of \(\rho\) under \(f\). We set \(p=\beta(\mu)\). Then for every \(q\in S^n\), we have
\[
d(p, f(q))\leq W_1(\mu, \delta_{f(q)})\leq W_1(\rho, \delta_q)=\int_{S^n} \abs{x-q} \, \rho(dx)=c_{n}.
\]
Therefore, according to Lemma~\ref{prop:ext1}, it follows that \(f\) admits a conical extension \(F\colon B^{n+1}\to Y\)  with the desired Lipschitz constant. To finish the proof it suffices to show that \(c_n\leq \sqrt{2}\) for every \(n\). Clearly,
\[
c_n=\frac{1}{2}\int_{S^n} \abs{x-q}+\abs{x+q}\, \rho(dx) \leq \frac{\sqrt{2}}{2} \int_{S^n} \sqrt{\abs{x+q}^2+\abs{x-q}^2}\, \rho(dx),
\]
and so using the parallelogram law, we find that \(c_n\leq \sqrt{2}\), as desired.
\end{proof}

The idea of considering points \(p\) of the form \(p=\beta(\mu)\) in the above proof is due to Urs Lang.

\begin{proof}[Proof of Lemma~\ref{prop:ext1}]
We may suppose that \(n\geq 1\). Let \(x\), \(y\in B^{n+1}\) be distinct points and abbreviate \(r=\abs{x}\) and \(s=\abs{y}\).  Without loss of generality, \(x\), \(y\in B^2\), \(0\leq r\leq s\) and \(x=(r,0)\), as well as \(y=(s\cos(\phi), s \sin(\phi))\) for some \(\phi\in [0, \pi]\). We set \(z=(r\cos(\phi), r\sin(\phi))\). We estimate
\begin{align*}
d(F(x), F(y))&\leq d(F(x), F(z))+d(F(z), F(y)) \\
&\leq \abs{x}\cdot d(f(\tfrac{x}{\abs{x}}), f(\tfrac{y}{\abs{y}}))+(\abs{y}-\abs{z})\cdot d(p, f(\tfrac{y}{\abs{y}})) \\
&\leq \abs{z-x}+d(p, f(\tfrac{y}{\abs{y}}))\cdot\abs{z-y}. 
\end{align*}
Thus, by virtue of the law of cosines, we get

\begin{equation}\label{eq:one}
\frac{d(F(x), F(y))}{\abs{x-y}}\leq \frac{ 2r \sin(\tfrac{\phi}{2})+R(s-r)}{\sqrt{r^2+s^2-2 r s\cos(\phi) }}.
\end{equation}
Let \(f(R,r,s,\phi)\) denote the right hand side of \eqref{eq:one}. We claim that  
\begin{equation}\label{eq:two}
f(R,r,s,\phi)\leq \sqrt{1+R^2}. 
\end{equation} 
Notice that this directly implies \eqref{eq:lip1}. We abbreviate \(D=1+R^2\). Clearly, \eqref{eq:two} follows if 
\begin{equation}\label{eq:three}
\bigl(2r \sin\bigl(\frac{\phi}{2}\bigr)+R(s-r)\bigr)^2-D \bigl(r^2+s^2-2 r s \cos (\phi)\bigr)\leq 0.
\end{equation}
By invoking the identity \(1-\cos(\phi)=2 \sin ^2(\frac{\phi}{2})\) and setting \(\varepsilon=s-r\), we see that the left hand side of \eqref{eq:three} transforms to
\[
\bigl(4(1-D)r^2-4D\varepsilon r\bigr) \cdot \sin ^2(\tfrac{\phi}{2})+4R \varepsilon r \cdot \sin (\tfrac{\phi}{2})+\varepsilon^2(R^2-D).
\]
The discriminant \(\Delta\) of the polynomial \(P(t)=(4(1-D)r^2-4D\varepsilon r)\cdot t^2+4R \varepsilon r  \cdot t+\varepsilon^2(R^2-D)\) equals 
\[
\Delta=-16 D r \varepsilon^2 \bigl((\varepsilon+r) \left(D-R^2\right)-r\bigr)=-16 (1+R^2) r \varepsilon^3
\]
Thus, \(P(\sin(\tfrac{\phi}{2}))\leq 0\) and 
\eqref{eq:three} follows.

Now, suppose that \(Y=(\mathcal{P}_1(S^n), W_1)\) and \(f(x)=\delta_x\) for all \(x\in S^{n}\). We set \(\mu\coloneqq p\). Let \(x\in B^{n+1}\) and suppose \(y\in S^n\) is a point such that
\(W_1(\mu, \delta_y)=R\). Let \(r\), \(s\) and \(\phi\) be defined as above. Notice that
\begin{align*}
W_1(F(x), F(y))&=W_1\bigl((1-r)\mu+r\delta_x, (1-s)\mu+s\delta_y\bigr)\\
&=(s-r)\cdot W_1(\mu, \delta_y)+r\cdot d(x,y).
\end{align*}
Therefore, 
\[
\frac{W_1(F(x), F(y))}{\abs{x-y}}=f(R,r,s,\phi).
\] 
To finish the proof we need to show that
\begin{equation}\label{eq:sup1}
\sup f(R,r,s,\phi)=\sqrt{1+R^2},
\end{equation}
where the \(\sup\) is taken over all \(0 \leq r \leq s \leq 1\) and \(0 \leq \phi \leq \pi\) such that \(f(R,r,s,\phi)\) is a well-defined real number. 

Let \(P(t)\) and \(\Delta\) be defined as above with \(D\) replaced by \(D^\prime=\eta+R^2\), where \(\eta\in [0,1)\). If \(0<\varepsilon < (\tfrac{1}{\eta}-1)r\), then \(\Delta > 0\); thus, since
\[
\frac{4R \varepsilon r}{-2\bigl(4(1-D^\prime)r^2-4D^\prime\varepsilon r\bigr)}\leq \frac{R}{2D^\prime}\leq 1,
\]
there exists \(t\in [0,1]\) such that \(P(t)>0\). Hence, there exists \((r_0, s_0, \phi_0)\) such that 
\[
f(R, r_0, s_0, \phi_0) > \sqrt{D^\prime}
\]
and thereby \eqref{eq:sup1} follows. 
\end{proof}

The following result shows that the constants \(\lambda_n\) in Proposition~\ref{prop:one} are the best possible when only conical extensions are considered. 

\begin{lemma}
Let \(f\colon S^n \to \mathcal{P}_1(S^n)\) be defined by \(f(x)=\delta_x\). Then every conical extension \(F\) of \(f\) satisfies
\[
\Lip F \geq \sqrt{1+c_n^2}.
\]
\end{lemma}

\begin{proof}
We set \(\mu=F(0)\) and denote by \(\rho\) the normalized Riemannian volume measure on \(S^n\). By Lemma~\ref{prop:ext1}, we know that \(\Lip F=\sqrt{1+R^2}\), where \(R=\sup_{x\in S^n} W_1(f(x), \mu)\). Since
\[
W_1(\delta_x, \mu)\leq R,
\]
we can integrate with respect to \(\rho\) on both sides and obtain
\[
\int_{S^n}\Big[\int_{S^n} \abs{x-y}\, \mu(dy)\Big]\, \rho(dx) \leq R.
\]
Hence, by Fubini's theorem,
\[
c_n=\int_{S^n}\Big[\int_{S^n} \abs{x-y}\, \rho(dx)\Big]\, \mu(dy)\leq R.
\]
This completes the proof.
\end{proof}

The idea of considering the \(1\)-Wasserstein space of \(S^n\) as a target space with (potential) extremal properties was motivated by \cite[Lemma~3.1]{basso--2022}, which states that \(\mathcal{F}(S^n)\) has such behavior among Banach space targets.


\section{An explicit version of Lang and Schlichenmaier's theorem}\label{sec:explicit-lang-schlichenmaier}

In this section we prove Theorem~\ref{thm:LS-msot-general-version} from the introduction. The overall strategy is to prove Theorem~\ref{thm:LS-msot-general-version} using Theorem~\ref{thm:basso-theorem}. 

Let \(X\) be a metric space and \(A\subset X\) a closed subset, and suppose there exist \(n\geq 1\) and \(c\geq 0\), such that \(A\) satisfies \(\Nagata(n-1, c)\). Furthermore, let \(Y\) be a metric space that satisfies \(\LC(n-1, \lambda)\) for some \(\lambda\geq 1\). Note that unlike in the statement of Theorem~\ref{thm:LS-msot-general-version}, we have chosen to work with the parameter \(n-1\) instead of \(n\). This choice allows us to write \(n\) instead of \(n+1\) in many places below. 

In order to apply Theorem~\ref{thm:basso-theorem}, we need to demonstrate that \(Y\) is an \((n, C)\)-simplicial extensor and \(X \setminus A\) admits a covering satisfying \(\Whitney(n, \alpha, \delta, \gamma)\). Notice that because of  Proposition~\ref{prop:Theorem5.2Revisited}, we already know that \(X\setminus A\) admits a covering satisfying \(\Whitney(3n, \alpha, \delta, \gamma)\). Our main effort will be devoted to reducing the multiplicity of the covering from \(3n\) to \(n\), while retaining the properties of a Whitney covering. But before doing so, let us begin with the straightforward proof that \(Y\) is an \((n, C)\)-simplicial extensor.

\begin{proposition}\label{prop:iterated-simplicial-extension}
Let \(\Sigma\) be an \(n\)-dimensional simplicial complex equipped with the \(\ell_2\)-metric and suppose \(Y\) is a metric space satisfying \(\LC(n-1, \lambda)\). Then every map \(f\colon \Sigma^{(0)}\to Y\) admits an extension \(F\colon \Sigma \to Y\) such that  
\begin{equation}\label{eq:proved-by-induction}
\Lip F|_{\Delta} \leq \big[ \lambda^n \cdot (\sqrt{2})^{n-1} \cdot \sqrt{n}\cdot (n\,!)^2 \big] \cdot \Lip f|_{\Delta^{(0)}}
\end{equation}
for each simplex \(\Delta\subset \Sigma\). In other words, up to an uniform constant the Lipschitz constant of the restriction \(F|_{\Delta}\) depends only on the Lipschitz constant of \(f\) restricted to the vertices of \(\Delta\).
\end{proposition}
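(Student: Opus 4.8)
The plan is to build $F$ by extending $f$ successively over the skeleta $\Sigma^{(0)}\subset\Sigma^{(1)}\subset\cdots\subset\Sigma^{(n)}=\Sigma$, performing each individual extension with Lemma~\ref{lem:special-case}. For a simplex $\Delta\subset\Sigma$ abbreviate $\rho_\Delta=\Lip f|_{\Delta^{(0)}}$, and note $\rho_\sigma\le\rho_\Delta$ whenever $\sigma$ is a face of $\Delta$. I would prove by induction on $k$ that $f$ extends to a map $F_k\colon\Sigma^{(k)}\to Y$ with $\Lip F_k|_\Delta\le C_k\rho_\Delta$ for every simplex $\Delta\subset\Sigma^{(k)}$, where $C_0=1$ and $C_k=\mu_k\,k^2\lambda\,C_{k-1}$ for the constant $\mu_k$ described below. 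Since distinct $k$-simplices of $\Sigma$ meet only in common faces and the extension produced over a $k$-simplex restricts to the already-built map on its boundary, the $F_k$ patch together consistently; $F:=F_n$ is then the desired extension, and as $k\mapsto C_k$ is non-decreasing the bound $C_n$ subsumes the bounds for lower-dimensional simplices.

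For the inductive step, fix a $k$-simplex $\Delta\subset\Sigma$ with $k\ge1$. Its boundary $\partial\Delta=\Sigma^{(k-1)}\cap\Delta$ is the union of the $(k-1)$-faces $\tau$ of $\Delta$, and by the inductive hypothesis each $F_{k-1}|_\tau$ is $C_{k-1}\rho_\Delta$-Lipschitz (using $\rho_\tau\le\rho_\Delta$). The crucial input is then the following geometric estimate, which I would record as a separate lemma: \emph{if $k\ge2$ and $F_p$, $F_q$ are two distinct $(k-1)$-faces of $\Delta^k$, then for every $x\in F_p$ and $y\in F_q$ there is $z\in F_p\cap F_q$ with $\abs{x-z}+\abs{z-y}\le\sqrt{2k/(k-1)}\,\abs{x-y}$.} Granting this and using that each face of $\Delta^k$ is convex, the concatenated segments $[x,z]\subset F_p$ and $[z,y]\subset F_q$ show, via the triangle inequality in $Y$, that $F_{k-1}|_{\partial\Delta}$ is $\mu_kC_{k-1}\rho_\Delta$-Lipschitz, where $\mu_k=\sqrt{2k/(k-1)}$ for $k\ge2$ and $\mu_1=1$ (for $k=1$ the boundary $\partial\Delta^1$ is just two points at distance $\sqrt2$, so $F_0=f$ is automatically $\rho_\Delta$-Lipschitz there). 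Because $Y$ satisfies $\LC(k-1,\lambda)$ — which holds since $k-1\le n-1$ — Lemma~\ref{lem:special-case}, applied with its parameter set to $k$, extends $F_{k-1}|_{\partial\Delta}$ to a map on $\Delta$ with Lipschitz constant at most $k^2\lambda\cdot\mu_kC_{k-1}\rho_\Delta$, which is exactly the recursion $C_k=\mu_k\,k^2\lambda\,C_{k-1}$. Unwinding it,
\[
C_n=\lambda^n(n!)^2\prod_{k=2}^n\sqrt{\tfrac{2k}{k-1}}=\lambda^n(n!)^2(\sqrt2)^{n-1}\sqrt n,
\]
which is the bound in \eqref{eq:proved-by-induction}.

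It remains to prove the geometric estimate, which is the only genuine computation. After relabelling coordinates we may assume $p=1$ and $q=2$, so $x=(0,x_2,x_3,\dots,x_{k+1})$ and $y=(y_1,0,y_3,\dots,y_{k+1})$ with nonnegative entries summing to $1$. I would take
\[
z=\bigl(0,\;0,\;x_3+\tfrac{x_2}{k-1},\;\dots,\;x_{k+1}+\tfrac{x_2}{k-1}\bigr),
\]
which lies in $F_1\cap F_2$ since its entries are nonnegative and sum to $1$. A direct expansion, using $\sum_{i\ge3}(x_i-y_i)=y_1-x_2$, yields the identity $\abs{x-z}^2+\abs{z-y}^2=\abs{x-y}^2+\tfrac{2x_2y_1}{k-1}$; since $2x_2y_1\le x_2^2+y_1^2\le\abs{x-y}^2$, this is at most $\tfrac{k}{k-1}\abs{x-y}^2$, and hence $(\abs{x-z}+\abs{z-y})^2\le2\bigl(\abs{x-z}^2+\abs{z-y}^2\bigr)\le\tfrac{2k}{k-1}\abs{x-y}^2$, as required.

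The main obstacle is this last estimate: the point $z$ must be chosen carefully. The cruder bound one gets from Lemma~\ref{lem:simplicial-geometry}, of order $\sqrt k$ per stage, would accumulate to a $\sqrt{(n-1)!}$-type factor and is far too lossy; the sharper estimate above — already sharp for $k=2$, the boundary of an equilateral triangle — is what produces the benign $(\sqrt2)^{n-1}\sqrt n$ term. Everything else is bookkeeping of constants and checking that the per-simplex extensions glue, which they do because they all agree with $F_{k-1}$ on the shared faces.
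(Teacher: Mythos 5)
Your proposal is correct and follows the same overall strategy as the paper: extend $f$ inductively over the skeleta, and at each step first show that the map already built on $\partial\Delta$ is globally Lipschitz there, then invoke Lemma~\ref{lem:special-case} to push it into the interior of $\Delta$, producing the recursion $C_k=\sqrt{2k/(k-1)}\cdot k^2\lambda\,C_{k-1}$ that unwinds to \eqref{eq:proved-by-induction}. The one genuine difference is how the intermediate geometric fact is obtained. The paper packages this step as Lemma~\ref{lem:auxiliary}, and inside its proof it imports the distortion bound for $\partial\Delta^n$ from a cited theorem of Baader (``there is a path $\gamma$ in $\partial\Delta^n$ from $x$ to $y$ with $\ell(\gamma)\le\sqrt{2+2/(n-1)}\,\abs{x-y}$'') together with the length--Lipschitz lemma \cite[Lemma~2.4]{basso--wenger--young}. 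You instead prove this directly: for $x\in F_p$, $y\in F_q$ you exhibit an explicit point $z\in F_p\cap F_q$ and verify the identity $\abs{x-z}^2+\abs{z-y}^2=\abs{x-y}^2+\tfrac{2x_2y_1}{k-1}$, giving the same constant $\sqrt{2k/(k-1)}=\sqrt{2+2/(k-1)}$; then two straight segments plus the triangle inequality in $Y$ replace the appeal to paths of controlled length. This makes the argument self-contained and slightly more elementary than the paper's version, at no cost in the final bound. The rest of your bookkeeping (monotonicity of $C_k$ so that the $n$-dimensional bound covers all lower-dimensional simplices, and the fact that successive per-simplex extensions agree on shared faces and hence glue) is also handled correctly and matches the paper's implicit reasoning.
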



The main reason why such a result is possible is that the combinatorial structure of \(\Delta^n\) lends itself to inductive extension of maps. The proof of Proposition~\ref{prop:iterated-simplicial-extension} relies on iterated applications of the following auxiliary lemma.

\begin{lemma}\label{lem:auxiliary}
Let \(Y\) be a metric space that satisfies \(\LC(n-1, \lambda)\) for some \(n\geq 2\) and \(f\colon \partial \Delta^n \to Y \) a map which is \(L\)-Lipschitz when restricted to any \((n-1)\)-face of \(\Delta^n\). Then \(f\) admits a 
\[
\sqrt{2+2/(n-1)}\cdot n^2\cdot \lambda \cdot L
\]
Lipschitz extension \(F\colon \Delta^n \to Y\). 
\end{lemma}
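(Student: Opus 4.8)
The hypothesis only controls the Lipschitz constant of $f$ on each $(n-1)$-face of $\partial\Delta^n$, not on all of $\partial\Delta^n$ at once (two points in different faces can be close in $\partial\Delta^n$ without the segment between them staying in a single face). So the first step is to pass from this ``facewise'' bound to a genuine Lipschitz bound on $\partial\Delta^n$: I would apply Lemma~\ref{lem:simplicial-geometry} to the simplicial structure of $\partial\Delta^n$, whose facets are $(n-1)$-simplices meeting pairwise in lower-dimensional faces. Given $x,y$ lying in two distinct facets $\Delta,\Delta'$, the lemma supplies $z\in\Delta\cap\Delta'$ with $\abs{x-z}+\abs{z-y}\le 4\sqrt{n-1}\,\abs{x-y}$; hence $d(f(x),f(y))\le L(\abs{x-z}+\abs{z-y})\le 4\sqrt{n-1}\,L\,\abs{x-y}$, and therefore $f$ is globally $(4\sqrt{n-1}\,L)$-Lipschitz on $\partial\Delta^n$. (One could try to be sharper here using the constant $2\sqrt{2}$ remarked at the end of Lemma~\ref{lem:simplicial-geometry}, or the exact angle computation of Lemma~\ref{lem:length-metric-vs-l2-metric} specialized to a single pair of adjacent simplices; I'll return to this below.)

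The second step is to feed this into Lemma~\ref{lem:special-case}. Since $Y$ satisfies $\LC(n-1,\lambda)$ and $f$ is $(4\sqrt{n-1}\,L)$-Lipschitz on $\partial\Delta^n$, Lemma~\ref{lem:special-case} produces an extension $F\colon\Delta^n\to Y$ with
\[
\Lip F \le n^2\lambda\cdot 4\sqrt{n-1}\,L.
\]
This is already of the shape $C(n)\cdot\lambda L$ with $C(n)$ polynomial in $n$, but it is off from the claimed $\sqrt{2+2/(n-1)}\cdot n^2\lambda L$ by the factor $4\sqrt{n-1}$ versus $\sqrt{2+2/(n-1)}=\sqrt{2n/(n-1)}$. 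So the crux is to improve the facewise-to-global constant from $4\sqrt{n-1}$ down to $\sqrt{2n/(n-1)}$.

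The main obstacle, and the place where real work is needed, is precisely that sharpening. The honest route is not to go through the crude Lemma~\ref{lem:simplicial-geometry} bound at all, but to redo the angle estimate directly for $\partial\Delta^n$: given $x$ in one facet and $y$ in another, pick a vertex $z$ of the common face and bound the angle $\angle xzy$ from below exactly as in the proof of Lemma~\ref{lem:length-metric-vs-l2-metric}. For the regular simplex $\Delta^n\subset\R^{n+1}$ (side length $\sqrt2$, all vertices on a sphere), a direct computation gives $\cos(\angle xzy)\le (1-\tfrac1{n+1})^{1/2}$ — the same bound that appears in that proof with the roles of $n,n-1$ shifted — whence by the law of cosines $\kappa(\abs{x-z}+\abs{z-y})\le\abs{x-y}$ with $\kappa^2=(1-\eta)/2$ and $\eta^2=1-1/(n+1)$; pushing the elementary estimate $\kappa^{-1}\le\sqrt{2+2/(n-1)}$ (using $1-\sqrt{1-t}\ge t/2$ with $t=1/(n+1)$, then simplifying) yields exactly the constant $\sqrt{2+2/(n-1)}$. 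Thus $f$ is $\sqrt{2+2/(n-1)}\,L$-Lipschitz on $\partial\Delta^n$, and Lemma~\ref{lem:special-case} gives the stated bound $\sqrt{2+2/(n-1)}\cdot n^2\lambda L$. The case $n=2$ should be checked separately or folded into the same computation since there $\partial\Delta^2$ is a triangle (three edges) and the constant $\sqrt{2+2/(n-1)}=2$ must still come out correctly; I expect the angle bound to go through verbatim.
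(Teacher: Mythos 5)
The overall architecture of your proposal matches the paper's: establish that $f$ is globally Lipschitz on $\partial\Delta^n$ with constant $\sqrt{2+2/(n-1)}\,L$, then feed that into Lemma~\ref{lem:special-case}. Where you diverge is in how the intrinsic distortion of $\partial\Delta^n$ is established. The paper simply quotes an external result (Baader--Lewark--Liechti, cited as \texttt{baader2020distortion}, Theorem~1), which asserts that any two points of $\partial\Delta^n$ are joined by a path in $\partial\Delta^n$ of length at most $\sqrt{2+2/(n-1)}$ times their Euclidean distance, and then uses the length--pointwise-Lipschitz estimate of \cite[Lemma~2.4]{basso--wenger--young}. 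You instead try to rederive the distortion constant from scratch, and it is there that the proposal breaks down.

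Two concrete problems with the angle argument. First, the choice of $z$. You propose to ``pick a vertex $z$ of the common face.'' But the common face $\Delta\cap\Delta'$ of two facets of $\Delta^n$ is an $(n-2)$-simplex, and if $x$ and $y$ are both within distance $\varepsilon$ of a point $p$ in the \emph{interior} of that face far from all its vertices, then $\abs{x-z}+\abs{z-y}\approx 2\abs{p-z}$ is bounded away from zero while $\abs{x-y}\to 0$: no vertex $z$ gives a bounded ratio. This is precisely why Lemma~\ref{lem:simplicial-geometry} picks a \emph{non-vertex} $z$ in the common face, and why the $m\ge 2$, disjoint-simplex case of Lemma~\ref{lem:length-metric-vs-l2-metric}, which you invoke as a model, can afford to use a vertex: there $x$ and $y$ have disjoint supports, so they cannot both crowd near a single point of the chain. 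In our setting the supports of $x$ and $y$ overlap on the indices of the common face, and the vertex-pivot computation does not transfer. Second, even setting that aside, the arithmetic at the end is reversed. From $\eta^2=1-\tfrac{1}{n+1}$ and $\kappa^2=(1-\eta)/2$ together with $1-\sqrt{1-t}\ge t/2$ one gets $\kappa^2\ge\tfrac{1}{4(n+1)}$, hence $\kappa^{-1}\le 2\sqrt{n+1}$ -- a bound that \emph{grows} with $n$, whereas $\sqrt{2+2/(n-1)}$ decreases to $\sqrt2$. So the claimed inequality $\kappa^{-1}\le\sqrt{2+2/(n-1)}$ is false; already for $n=2$ it would require $2\sqrt3\le 2$. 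What the sharp constant $\sqrt{2+2/(n-1)}=\sqrt{2n/(n-1)}$ actually reflects is the dihedral angle of the regular $n$-simplex ($\cos\theta=1/n$), which one sees via an unfolding argument across the common $(n-2)$-face rather than via a cosine bound at a fixed pivot; this is essentially the content of the cited theorem, and it is not recovered by the estimate you wrote down.

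Your first paragraph, which uses Lemma~\ref{lem:simplicial-geometry} directly and yields a $4\sqrt{n-1}\,L$ global Lipschitz bound, is correct and would give a valid but quantitatively weaker version of the lemma (with $4\sqrt{n-1}$ in place of $\sqrt{2+2/(n-1)}$, which in turn would inflate the constant in Theorem~\ref{thm:LS-msot-general-version}). To reach the stated constant you either need to invoke the distortion theorem as the paper does, or carry out the unfolding/dihedral-angle computation, neither of which is what the proposal currently contains.
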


\begin{proof}
Let \(x\), \(y\in \partial \Delta^n\). By a result of \cite[Theorem 1]{baader2020distortion}, there
exists a continuous path \(\gamma\colon [0,1] \to \partial \Delta^n\) such that 
\[
\ell(\gamma)\leq \sqrt{2+\tfrac{2}{n-1}} \cdot d(x, y),
\]
where \(\ell(\gamma)\) denotes the length of \(\gamma\). We abbreviate \(c=\sqrt{2+2/(n-1)}\). With the help of \cite[Lemma 2.4]{basso--wenger--young}, we obtain
\[
d(f(x), f(y))\leq \ell(f\circ \gamma) \leq \big[\hspace{-0.35em}\sup_{p\in \partial \Delta^n} \Lip f(p)\big] \cdot \ell(\gamma) \leq  L c \cdot d(x, y).
\]
Therefore, \(f\) is \(Lc\)-Lipschitz and the existence of \(F\) follows directly from Lemma~\ref{lem:special-case}.
\end{proof}

\begin{proof}[Proof of Proposition~\ref{prop:iterated-simplicial-extension}]
In case \(\Sigma\) is zero-dimensional there is nothing to prove, so we may assume that \(n>0\). Since \(Y\) satisfies \(\LC(n-1, \lambda)\) and thus \(\LC(0, \lambda)\) in particular, there exists a map \(F\colon \Sigma^{(1)}\to Y\) such that
\begin{equation*}
\Lip F|_{\Delta} \leq \lambda \cdot \Lip f|_{\Delta^{(0)}}
\end{equation*}
for every simplex \(\Delta\subset \Sigma^{(1)}\). This proves the proposition in case \(n=1\). Suppose now \(n\geq 2\). By induction, we may suppose that there exists a map \(\widetilde{F}\colon \Sigma^{(n-1)}\to Y\) such that \eqref{eq:proved-by-induction} holds for every \((n-1)\)-simplex \(\Delta\subset \Sigma\). Hence, by applying Lemma~\ref{lem:auxiliary} to \(\widetilde{F}|_{\partial \Delta}\) for each \(n\)-simplex \(\Delta\subset \Sigma\), we obtain a map \(F\colon \Sigma\to Y\) such that 
\[
\Lip F|_{\Delta} \leq \big[\sqrt{2+2/(n-1)}\cdot n^2\cdot \lambda\big]\cdot \max_{\Delta'\subset \Delta^{(n-1)}} \Lip \widetilde{F}|_{\Delta'}.
\]
This implies that
\[
\Lip F|_{\Delta} \leq \big[ \lambda^n \cdot (\sqrt{2})^{n-1} \cdot \sqrt{n}\cdot (n\,!)^2 \big] \cdot \Lip f|_{\Delta^{(0)}},
\]
as desired. 
\end{proof}

We now proceed with the much more involved proof that \(X\setminus A\) admits a \(\Whitney(n, \alpha, \delta, \gamma)\)-covering. As a first step in this direction, we establish the following equivalent characterization of \(\Nagata(n,c)\). For a list of other equivalent characterizations, see \cite[Proposition 4.2]{licht2023lipschitz}.

\begin{lemma}\label{lem:colored-nagata}
Let \(X\) be a metric space satisfying \(\Nagata(n,c)\). Then for every \(s>0\), \(X\) admits a covering \(\mathcal{B}\) such that the following holds:
\begin{enumerate}
    \item every \(B\in \mathcal{B}\) satisfies 
    \[
    \diam B \leq 2(c+1)(n+2)\cdot s;
    \]
    \item\label{it:separated-sets} there exists a decomposition \(\mathcal{B}=\mathcal{B}_1 \cup \dotsm \cup \mathcal{B}_{n+1}\) such that for every \(k=1, \ldots, n+1\),
    \[
    d(B, B')>s
    \]
    for all distinct \(B\), \(B'\in \mathcal{B}_k\).
\end{enumerate}
\end{lemma}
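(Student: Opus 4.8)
The plan is to derive this colored covering from the $\Nagata(n,c)$ condition applied at a suitable scale, and then to turn the multiplicity control into a coloring via a greedy/transfinite selection argument. First I would apply $\Nagata(n,c)$ at scale $t := (n+2)\cdot s$ to obtain a covering $\mathcal{A} = (A_i)_{i\in I}$ of $X$ with $t$-multiplicity at most $n+1$ and every member of diameter at most $c\cdot t = c(n+2)s$. To gain the extra room needed for the separation, I would not color $\mathcal{A}$ itself but rather a shrinking of its members: for each $i$ pick a point $p_i \in A_i$ and work with the pieces $A_i$ together with their location near $p_i$. The key structural fact I want is that the "enlarged" sets $N_{s}(A_i)$ still have bounded multiplicity — more precisely, if $x \in N_{s}(A_i)$ for several indices, then by the diameter bound all the corresponding $A_i$ lie in a set of diameter at most $c(n+2)s + 2s \le (c+1)(n+2)s < $ (something controlled), hence meet at most $n+1$ of them... but this is circular since that diameter may exceed $t$. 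The honest fix is to instead enlarge only slightly: sets of the form $A_i$ with the understanding that two indices $i,j$ are "in conflict" if $d(A_i, A_j) \le s$.

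The heart of the argument is then a coloring lemma: define a graph $G$ on the index set $I$ where $i \sim j$ iff $d(A_i, A_j) \le s$. I claim $G$ has chromatic number at most $n+1$, or at least that one can $(n+1)$-color it after passing to a covering with somewhat larger (but still $O((c+1)(n+2)s)$) diameter pieces. To see bounded degree / bounded clique behavior: if $i_0, i_1, \ldots, i_k$ are pairwise in conflict, then picking $a_\ell \in A_{i_\ell}$ with the conflict witnesses, the union $\bigcup A_{i_\ell}$ has diameter at most $2c(n+2)s + s \cdot k$... which again grows with $k$. So a cleaner route: replace each $A_i$ by $\{p_i\}$, a single representative point; the points $\{p_i\}$ inherit nothing directly, so instead apply $\Nagata$ at the larger scale $s' := (2c+3)(n+2) s$ — no. Let me restate the cleanest version.

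The cleanest approach: apply $\Nagata(n,c)$ at scale $s$ directly to get $(A_i)$ with $s$-multiplicity $\le n+1$ and $\diam A_i \le cs$. Now build an auxiliary metric by collapsing: the point is that any subset of diameter $<s$ meets $\le n+1$ of the $A_i$. Order $I$ (using the axiom of choice / well-ordering) and greedily assign to each $A_i$ a color in $\{1,\ldots,n+1\}$ avoiding colors of previously-colored $A_j$ with $d(A_i,A_j) < s$ — but there could be infinitely many such $j$, not just $n$ of them, because "distance $<s$" is not the same as "some common diameter-$<s$ set meets both". This is the main obstacle. The resolution, following the standard trick, is to first pass from $(A_i)$ to the covering by enlarged sets $B_i := N_{s/2}(A_i)$: then $\diam B_i \le cs + s \le (c+1)s$, the $B_i$ still cover $X$, and crucially any point $x$ lies in at most $n+1$ of the $B_i$ — because if $x\in B_i$ then $d(x,A_i)<s/2$, so all such $A_i$ meet the ball $B(x,s/2)$ which has diameter $<s$. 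Hence $(B_i)$ has *pointwise* multiplicity $\le n+1$, so the "intersection graph" on $(B_i)$ has the property that any point is in $\le n+1$ sets; a standard greedy coloring of this intersection graph (which is now a graph where adjacency means genuine overlap, and where every vertex together with its neighbors through a fixed point numbers $\le n+1$)... still need finite degree. The actual final move, which I expect to be the crux, is: rescale once more — apply $\Nagata(n,c)$ at scale $3s$ to get $(A_i)$ with $\diam A_i \le 3cs$, set $B_i = N_s(A_i)$ so $\diam B_i \le 3cs + 2s \le 2(c+1)(n+2)s$ (absorbing constants generously as the statement allows), note $(B_i)$ has pointwise multiplicity $\le n+1$ since $x\in B_i \Rightarrow A_i \cap B(x,s) \ne\emptyset$ and $\diam B(x,s) = 2s < 3s$; then color by well-ordering $I$ and, since for each $i$ the set of $j<i$ with $d(B_i,B_j)>$... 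I would finish by the observation that within each color class, if $d(B,B')\le s$ were to fail to be $>s$ we recolor — but the separation we actually get is $d(B_i, B_j) > s$ forced because if $d(B_i,B_j)\le s$ then, expanding, $N_s(A_i)$ and $N_s(A_j)$ are within $s$, so there is a point within $s$ of both $A_i$ and $A_j$... one more enlargement gives a common witness set of diameter $<3s$, placing $i,j$ among the $\le n+1$ indices hitting it, and greedy coloring over these $n+1$-element "conflict neighborhoods" succeeds by a transfinite-induction argument. The separation conclusion $d(B,B')>s$ in item (\ref{it:separated-sets}) then holds by construction of the coloring. I would present this with the scale chosen as $t=3s$ (or whatever makes the arithmetic land under $2(c+1)(n+2)s$) and cite the axiom of choice for the well-ordering, exactly as elsewhere in the paper.
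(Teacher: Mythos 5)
Your plan hinges on a greedy $(n+1)$-coloring of the conflict graph $G$ on the index set $I$, where $i\sim j$ iff $d(B_i,B_j)\le s$, but this graph can have chromatic number strictly larger than $n+1$ even when the underlying Nagata covering has multiplicity $n+1$. The observation you rely on --- that any conflicting pair $\{i,j\}$ produces a witness set of diameter $<3s$ which places both $A_i$ and $A_j$ among at most $n+1$ members of the covering --- is only a \emph{pairwise} fact: each pair gets its own witness, and nothing certifies that a clique of size $n+2$ cannot occur. Concretely, take $X=\R$, $n=1$, $s=1$, $c=1$, and the covering $A_k=[3k,3(k+1)]$ for $k\in\Z$, which has $3s$-multiplicity $2$ and diameter $3=3cs$. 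Then $B_k=N_s(A_k)=(3k-1,3k+4)$, and one checks $d(B_0,B_1)=d(B_1,B_2)=0$ while $d(B_0,B_2)=1=s$, so $\{B_0,B_1,B_2\}$ is a triangle in $G$. A triangle is not $2$-colorable, so the covering $(B_i)$ you constructed admits no valid coloring with $n+1=2$ colors, even though $\R$ of course does admit a covering of the required form (e.g.\ longer intervals colored by parity). The same obstruction persists under any of the rescalings you contemplate: the sets you enlarge become chains that form long odd cycles and triangles in the conflict graph, and the transfinite greedy step you sketch has no valid move at such configurations.

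The paper avoids this entirely by \emph{not} attempting to color (enlargements of) the Nagata sets. Instead, it maps $X$ Lipschitzly into the $(n+1)$-skeleton $K$ of a cube via $\phi(x)=\bigl(\max\{s-d(x,B_i),0\}\bigr)_{i\in I}$, passes to the first cubical barycentric subdivision, and defines the members of $\mathcal{B}$ as the $\phi$-preimages of carefully sized rectangular cells $R_v$ around the barycenters $v$ of the cubes of $K$. The color of such a set is the \emph{dimension} of the cube of which $v$ is the barycenter --- yielding exactly $n+1$ colors --- and the separation between same-color members comes from the explicit nested radii $s_k=(1-k/(n+2))s$ of the cells, not from any coloring of a conflict graph. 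The multiplicity hypothesis enters only to guarantee that $\phi$ lands in the $(n+1)$-skeleton; it is never used to bound degrees or clique sizes, which is precisely what your greedy argument would require and cannot obtain.
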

The numbers \(k=1, \ldots, n+1\) are sometimes referred to as the colors of the covering \(\mathcal{B}\), and we say that all members of \(\mathcal{B}_k\) have the same color. In \cite[Proposition 2.5]{lang-schlichenmaier}, Lang and Schlichenmaier gave a proof of Lemma~\ref{lem:colored-nagata} without explicit constants by relying on geometric properties of simplicial complexes. In the following we give a simplified proof of the lemma by working with cubical complexes instead.

\begin{proof}[Proof of Lemma~\ref{lem:colored-nagata}]
Fix \(s>0\) and let \((B_i)_{i\in I}\) be a \(c\cdot (2s)\)-bounded covering of \(X\) with \(2s\)-multiplicity at most \(n+1\). We may suppose that \(I\neq \varnothing\) in the following. We consider the functions \(\phi_i\colon X \to \R\) defined by 
\[
\phi_i(x)=\max\{ s-d(x, B_i), 0\}.
\]
It follows that \(\phi_i \leq s\), and for each \(x\in X\), we have \(\phi_i(x)>0\) for at most  \(n+1\) indices \(i\in I\). Thus, \(\phi \colon X \to \prod_{i\in I} [0, 2s]\) defined by \(\phi(x)=(\phi_i(x))_{i\in I}\) maps to the \((n+1)\)-skeleton \(K\subset C\) of the (possibly infinite-dimensional) cube \(C=\prod_{i\in I} [0, 2s]\). In what follows, we view \(\phi\) as a map from \(X\) to \(K\) and endow \(K\) with the uniform distance \(\rho( (\alpha_i), (\beta_i))=\sup_i \abs{\alpha_i-\beta_i}\). Clearly, \(\phi\) is \(1\)-Lipschitz. 

Let \(\text{sd}_1(K)\) be the first cubical barycentric subdivision of \(K\). Each \(v\in \text{sd}_1(K)\) is the barycenter of a unique cube \(K_v\) of \(K\). Given \(v=(v_i)\in \text{sd}_1(K)\) we call \(i\in I\) an active coordinate of \(v\) if \(v_i=s\). Notice that number of active coordinates of \(v\) is equal to the dimension of \(K_v\).  In the following, we abbreviate 
\[
s_{k}=\Big(1-\frac{k}{n+2}\Big)\cdot s
\]
for \(k=1, \ldots, n+1\). Notice that \(s_{k+1} < s_k < s\). For \(v\in \text{sd}_1(K)\) with \(k=\dim K_v >0\), we consider the rectangular cell \(R_v\) consisting of all \(y\in K\) such that for every active coordinate
\[
\abs{v_i-y_i}< s-s_k,
\]
and \(\abs{v_i-y_i} \leq s_{k+1}\) otherwise (with the convention that \(s_{n+2}=0\)). Let \(B_v\subset X\) denote the set of all \(x\in X\) such that \(\phi(x)\in R_v\).

For each \(k=1, \ldots, n+1\) we define \(\mathcal{B}_k\) to be the family consisting of all sets \(B_v\) for which \(K_v\) has dimension \(k\). By construction,
\[
d(B_v, B_{v'})\geq \rho(R_v, R_{v'})
\]
and so if \(v\neq v'\) and \(\dim K_v=\dim K_{v'}=k\), it follows that
\[
d(B_v, B_{v'}) \geq \rho(R_v, R_{v'}) > \abs{s_{k}-s_{k+1}}=\frac{1}{n+2} \cdot s.
\]
This implies Lemma~\ref{lem:colored-nagata}\eqref{it:separated-sets}. Moreover if \(x\), \(x'\in B_v\), then for any active coordinate \(i\) of \(v\), we have \(\phi_i(x), \phi_i(x')>0\). Hence, since there is at least one active coordinate, using the definition of \(\phi_i\), we find that \(\diam B_v \leq (c+1) 2s\). 

To finish the proof we need to establish that \(\mathcal{B}=\bigcup \mathcal{B}_k\) covers \(X\). Let \(y=\phi(x)\) for some \(x\in X\). We consider an auxiliary set \(A(y)\subset \N\) defined as follows. We let \(A(y)\) be the set
\[
\bigl\{ k\in \N: \exists v\in \text{sd}_1(K) \text{  s. t. } \# I(v)=k \,\text{ and }\, \forall i\in I(v): \abs{v_i-y_i} < s-s_k \, \bigr\},
\]
where we use \(I(v)\) to denote the set of all active coordinates of \(v\). Then \(1\in A(y)\) and \(A(y)\) is contained in the interval \([1, n+1]\). Let \(k\) denote the maximal element of \(A(y)\) and let \(v\in \text{sd}_1(K)\) be such that \(\#I(v)=k\) and \(\abs{v_i-y_i} < s-s_k\) for all \(i\in I(v)\). We may suppose that \(v_i=0\) for all \(i\in I\setminus I(v)\). We claim that \(\phi(x)\in R_v\) (and so \(x\in B_v\)). Suppose for the sake of a contradiction that this is not the case. Then \(k<n+1\) and there exists \(j\in I\setminus I(v)\) such that \(\phi_j(x) > s_{k+1}\). However, using that \(\phi_j(x) \leq s\), this implies that \(\abs{s-\phi_{j}(x)} < s-s_{k+1}\). By considering \(v'\in \text{sd}_1(K)\), which is obtained from \(v\) by replacing its \(j\)-th entry with \(s\), we find that \(k+1\in A(y)\). This is a contradiction. Therefore, \(\phi(x)\in R_v\), as desired.
\end{proof}

Our next result is the following refined version of Proposition~\ref{prop:Theorem5.2Revisited}. 

\begin{proposition}\label{prop:refined-whitney-decomposition}
Let \(X\) be a metric space and \(A\subset X\) a closed subset satisfying \(\Nagata(n-1,c)\) for some \(n\geq 1\) and \(c\geq 0\). 
Then for every 
\[
r> 2\cdot (c+1) \cdot 4^{n+1}
\]
there exists a covering \(\mathcal{B}\) of \(X\setminus A\) such that
\vspace{0.2em}
\begin{enumerate}[itemsep=0.75em]
\item\label{it:one-a} every \(B\in \mathcal{B}\) satisfies \(\diam{B} \leq \alpha\cdot d(B, A)\), where
\[\alpha= 40\cdot r^3\cdot (c+1) \cdot (n+1);
\]
\item\label{it:two-a} every \(E\subset X \setminus A\) with 
\[
\diam E \leq r^{\,\frac{1}{2n}}\cdot d(E, A)
\]
meets at most \(n+1\) members of \(\mathcal{B}\);
\item\label{it:three-a} for every \(B\in \mathcal{B}\), we
\[
\hd(B, A)\leq r^2\cdot d(B, A),
\]
where \(\hd(B, A)\) denotes the asymmetric Hausdorff distance from \(B\) to \(A\).
\end{enumerate}   

\end{proposition}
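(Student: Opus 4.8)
The idea is to refine the construction underlying Proposition~\ref{prop:Theorem5.2Revisited} — following the more elaborate decomposition of Lang and Schlichenmaier \cite[p.~3653]{lang-schlichenmaier} — so that the bounded-multiplicity constant becomes exactly $n+1$ rather than a fixed multiple of it, at the price of letting the diameter and distance-to-domain constants grow to fixed powers of $r$. The two ingredients are the \emph{colored} Nagata coverings supplied by Lemma~\ref{lem:colored-nagata} and a staggered (``running-bond'') way of stacking the radial scales, which is the quantitative reflection of the fact that adjoining the one-dimensional coordinate $d(\cdot,A)$ raises the Nagata dimension by at most one.

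First I would set $\mu=r^{1/(2n)}$ and partition $X\setminus A$ into the $\mu$-adic shells $R_\ell=\{x:\mu^\ell\le d(x,A)<\mu^{\ell+1}\}$, $\ell\in\Z$, grouping consecutive shells into blocks of $2n$, so that one block spans a single factor of $r$. For each $\ell$, Lemma~\ref{lem:colored-nagata} applied to $A$ at scale $\asymp\mu^\ell$ yields a covering $\mathcal A^{(\ell)}=\mathcal A^{(\ell)}_1\cup\dots\cup\mathcal A^{(\ell)}_n$ of $A$ by sets of diameter $\lesssim(c+1)(n+1)\mu^\ell$ whose distinct members of a common color class are more than $\asymp\mu^\ell$ apart; I would choose these coverings at successive scales compatibly, so that their color classes can be matched up across scales. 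Turning each colored cell of $A$ into a cell $B\subset X\setminus A$ by a radial push-out confined to the relevant shells, with small overlaps so that the totality $\mathcal B$ genuinely covers $X\setminus A$, one obtains $d(B,A)\gtrsim\mu^\ell$, $\hd(B,A)\lesssim r\cdot\mu^\ell$ and $\diam B\lesssim(c+1)(n+1)\,r\cdot\mu^\ell$. Since $\mu^{2n}=r$ and $r>2(c+1)4^{n+1}$, these quantities are comfortably dominated by $r^2\,d(B,A)$ and $40\,r^3(c+1)(n+1)\,d(B,A)$, which establishes \eqref{it:one-a} and \eqref{it:three-a} after elementary book-keeping.

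The real content is \eqref{it:two-a}. Fix $E\subset X\setminus A$ with $\diam E\le\mu\cdot d(E,A)$. Since $d(\cdot,A)$ varies on $E$ by at most $\diam E\le\mu\,d(E,A)$, the set $E$ lies in an absolutely bounded window of consecutive shells, so it can meet cells from only boundedly many shells. In one shell two cells of the same color are more than $\asymp\mu^\ell\gg\mu\,d(E,A)\ge\diam E$ apart, so $E$ meets at most one cell per color there; summing over the window this still only bounds the number of cells meeting $E$ by a fixed multiple of $n+1$. To bring it down to $n+1$ I would assign every cell a super-color in $\{1,\dots,n+1\}$ built from its Nagata color together with the position of its shell inside its block, in a running-bond pattern (consecutive blocks using cyclically shifted super-color classes). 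The pattern is engineered so that two cells of equal super-color that could both meet a common admissible $E$ are forced either to lie in the same shell — hence are already separated by the colored Nagata property — or to be ``vertically stacked'' by virtue of the compatibility of the coverings across scales, in which case a set of diameter at most $\mu\,d(E,A)$ cannot reach both. The choices $\mu=r^{1/(2n)}$ and $r>2(c+1)4^{n+1}$ are made precisely so that both separations hold with room to spare; this is also where the exponents in $\alpha$ and in \eqref{it:three-a} originate.

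I expect the crux to be exactly this: arranging the radial push-out and the $(n+1)$-coloring so that equally super-colored cells are provably separated at \emph{every} relevant scale — simultaneously in the radial and in the $A$-direction — while keeping all diameter and distance-to-$A$ bounds within the stated powers of $r$. Once the coloring is in place, verifying that $\mathcal B$ covers $X\setminus A$ and checking \eqref{it:one-a} and \eqref{it:three-a} is routine scale-counting using $\mu^{2n}=r$.
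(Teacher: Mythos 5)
Your high-level plan — colored Nagata coverings of $A$ combined with a staggered radial decomposition of $X\setminus A$ — is indeed the structure of the paper's proof, and you are honest that the crux is the multiplicity count. But as written the crux step has a genuine gap, and there is a quantitative error on the way to it.

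The quantitative error: you take colored Nagata coverings of $A$ at scale $\asymp\mu^\ell$, so same-color cells are separated by $\asymp\mu^\ell$; but an admissible $E$ in the $\ell$-th shell satisfies $d(E,A)\asymp\mu^{\ell}$ (up to one shell width) and hence only $\diam E\le\mu\,d(E,A)\asymp\mu^{\ell+1}$, which is \emph{larger} than $\mu^{\ell}$ since $\mu>1$. So your inequality ``$\mu^\ell\gg\mu\,d(E,A)\ge\diam E$'' is the wrong way round, and $E$ can in fact straddle two cells of the same color in one shell. The paper avoids this by using Nagata coverings of $A$ at a scale $s_i\asymp r^{i+1}$ that is a full factor of $r$ \emph{larger} than the ambient distance-to-$A$ scale $r^i$; after pulling back under a nearest-point retraction and merging nearby pullbacks into clusters (an equivalence-relation step that your sketch omits but that is needed precisely to restore $r^{i+1}$-separation), one gets the required separation dominating $\diam E$.

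The conceptual gap: you propose a fixed $(n+1)$-super-coloring of the cells so that no two same-super-colored cells can both meet an admissible $E$. The paper does not do this and it is not clear such a coloring exists: two cells of the same Nagata color sitting in adjacent shells can both be reached by a thin $E$ near the boundary, and ``alternating'' super-colors shell by shell within a color class inflates the count past $n+1$. What the paper actually does is a counting argument depending on $E$. It builds $n$ strip families $R_k^i=\{r^{i-1}r^{k/n}\le d(\cdot,A)<r^{i}r^{k/n}\}$, each of multiplicative width $r$ and offset by $r^{1/n}$, and inside each $R_k^i$ arranges (via the merged pullbacks) that distinct cells of family $k$ are $>r^{i+1}$-separated. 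Given $E$ with $\diam E\le r^{1/(2n)}d(E,A)$, the inequality $1+r^{1/(2n)}<r^{1/n}$ forces $E$ to lie inside a band of $\log_r$-width $<1/n$, so $E$ crosses the boundary between consecutive strips for at most one index $k_0$; for each $k\ne k_0$, $E$ sits in a single $R_k^i$ and hits at most one cell, and for $k_0$ it can hit at most two. This gives $(n-1)+2=n+1$. That $E$-dependent counting, rather than a static coloring, is the missing idea, and together with the larger Nagata scale and the merging step it is what makes the constants in \eqref{it:one-a}–\eqref{it:three-a} come out.
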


Notice in particular the improved bound for the multiplicity in \eqref{it:two-a}. This improvement is achieved by modifying the proof of Proposition~\ref{prop:Theorem5.2Revisited} by using the coverings from Lemma~\ref{lem:colored-nagata} above. The following proof of Proposition~\ref{prop:refined-whitney-decomposition} is essentially the same as the proof of \cite[Theorem 1.6]{lang-schlichenmaier} by Lang and Schlichenmaier. However, our presentation is quite different and hopefully more transparent.

\begin{proof}[Proof of Proposition~\ref{prop:refined-whitney-decomposition}]
 Let us begin by fixing some notation. For \(k\in \{0, 1, \ldots, n-1\}\) and \(i\in \Z\) we define the “annulus"
\[
R_k^i=\{ x\in X \, : \, r^{i-1}\cdot r^{k/n} \leq d(x, A) < r^{i}\cdot r^{k/n} \}.
\]
We point out that for every \(k\) the union \(\bigcup_{i\in \Z} R_k^i\) is a partition of \(X\setminus A\). Also notice that \(R_k^i\subset R_0^{i}\cup R_0^{i+1}\) for every \(i\in \Z\). Now, suppose for the moment that we have constructed families \(\mathcal{B}_k=\bigcup_{i\in \Z} \mathcal{B}_k^i\) with the following properties: 
 \begin{enumerate}[label=\alph*)]
     \item\label{it:rk} every \(B\in \mathcal{B}_k^i\) is contained in \(R_k^i\);
     \item\label{it:mult} every family \(\mathcal{B}_k^i\) has \(r^{i+1}\)-multiplicity one;
     \item\label{it:cover} the union \(\mathcal{B}=\mathcal{B}_0 \cup \dotsm \cup \mathcal{B}_{n-1}\) is a covering of \(X\setminus A\).
 \end{enumerate}
 We claim that any such covering \(\mathcal{B}\) satisfies \eqref{it:two-a}, that is, if \(E\subset X\setminus A\) has diameter less than or equal to \(r^{1/(2n)}\cdot  d(E, A)\), then \(E\) meets at most \(n+1\) members of the cover. This can be seen as follows. First notice that since \(r > \varphi^{2n}\), where \(\varphi\) denotes the golden ratio, we have \(1+r^{1/(2n)} < r^{1/n}\). Hence, there exists an integer \(\ell\) such that
 \[
 r^{\ell/n} \leq d(e, A) <  r^{\ell/n}\cdot r^{2/n}
 \]
 for every \(e\in E\). Let \(k_0 \in \{0, \ldots, n-1\}\) be the unique integer such that \(k_0 \equiv \ell+1 \pmod{n}\). It follows that for every \(k\) distinct from \(k_0\) there exists a unique \(i\in \Z\) such that \(E\subset R_{k}^i\). Thus, since \(\mathcal{B}_{k}^i\) has \(r^{i+1}\)-multiplicity one, we find that \(E\) meets at most one member of \(\mathcal{B}_{k}\) for \(k \neq k_0\). Notice that \(E\subset R_{k_0}^i \cup R_{k_0}^{i+1}\) for some \(i\in \Z\). In particular, using that \(\diam E \leq r^{i+1}\), we find that \(E\) meets at most two members of \(\mathcal{B}_{k_0}\). Consequently, \(E\) meets at most \(n+1\) members of \(\mathcal{B}\), as claimed. 

 In what follows, we construct families \(\mathcal{B}_k\) satisfying the properties stated above. Fix a retraction \(\rho\colon X \to A\) such that \(d(x, \rho(x)) \leq r^{i}\) on \(R_0^i\).  Let \(c'=2(c+1)(n+1)\) and for every \(i\in\Z\) we set 
 \[
 s_i=4 \cdot r^{i+1}
 \]
 and we let \(\mathcal{A}^i=\mathcal{A}^i_0 \cup \dotsm \cup \mathcal{A}_{n-1}^i\) be a \(c'\cdot s_i\)-bounded covering of \(A\) such that each \(\mathcal{A}_k^i\) has \(s_i\)-multiplicity one. The existence of such colored coverings of \(A\) is guaranteed by Lemma~\ref{lem:colored-nagata}. We define 
 \[
 \widehat{\mathcal{B}}_k^{\,i}=\{ \, \rho^{-1}(C)\cap  R^i_k \,  : \,  C\in \mathcal{A}_k^i \cup \mathcal{A}_{k}^{i+1}\,\}.
 \]
Let \(\sim\) be the equivalence relation on \(\widehat{\mathcal{B}}_k^{\,i}\) generated by \(B \sim_\ast B'\) if and only if \(d(B, B') \leq r^{i+1}\). Using this relation, we define
\[
\mathcal{B}_k^i=\big\{ \bigcup_{B'\sim B} B' \, : \, B\in  \widehat{\mathcal{B}}_k^{\,i}\,\big\}.
\]
We need to justify why \(\mathcal{B}=\bigcup \mathcal{B}_k^i\) has the desired properties. Obviously, \ref{it:rk} and \ref{it:mult} are satisfied because of the construction of \(\mathcal{B}_k^i\).  Let \(x\in X\setminus A\). Then \(x\) is contained in some \(R_0^i\) and there exists \(C\in \mathcal{A}^i\) containing \(\rho(x)\). Let \(k\) denote the color of \(C\). It follows that there exists \(B\in \widehat{\mathcal{B}}_k^{\,i-1} \cup \widehat{\mathcal{B}}_k^{\,i}\) such that \(x\in B\). In particular, \(x\) is contained in some member of \(\mathcal{B}_k^{i-1}\cup \mathcal{B}_k^i\) and thus in some member of \(\mathcal{B}\). This proves that \(\mathcal{B}\) is a covering of \(X\setminus A\).

To finish the proof we need to show that \(\mathcal{B}\) also satisfies \eqref{it:one-a} and \eqref{it:three-a}. Given \(B\in \widehat{\mathcal{B}}_k^{\,i}\), it holds \(B=\rho^{-1}(C)\cap R_k^i\)  for some \(C\in \mathcal{A}^j_k\) with \(j\in \{i, i+1\}\).  We call the smallest such \(j\) the order of \(B\). We claim that any equivalence class \([B]\) contains at most one element of order \(i+1\). Notice that if \(B\in  \widehat{\mathcal{B}}_k^{\,i}\) has order \(j\), then
\[
\diam B \leq 2 r^{i+1}+c' s_j.
\]
Moreover, if \(B'\in \widehat{\mathcal{B}}_k^{\,i}\) is distinct from \(B\) and has the same order, then we compute
\[
s_j <d(C, C')\leq 2 r^{i+1}+d(B, B').
\]
This implies that \(B\) and \(B'\) are \(r^{j+1}\)-separated. Hence, \(B \sim_\ast B'\) implies that \(B\) and \(B'\) have  different orders. As a result, if \(B \sim_\ast B' \sim_\ast B''\), then \(B\) and \(B''\) have the same order \(j\). Thus, letting \(j'\) be the other element  of \(\{i, i+1\}\) distinct from \(j\), we find that
\[
r^{j+1} < d(B, B'') \leq 2r^{i+1}+\diam B' \leq 4 r^{i+1}+4 c' r^{j'+1}.
\]
But since \(r> 4(1+c')\) this is only possible if \(j=i\) and so \([B]\) contains at most one element of order \(i+1\). As a result, for any \(B\in \mathcal{B}_k^i\) we have
\[
\diam B \leq 2\cdot( 2 r^{i+1}+c's_i)+2 r^{i+1}+(2 r^{i+1}+c's_{i+1}).
\]
Since \(r^{i-1} \leq d(B, A)\), we arrive at \(\diam B \leq \alpha \cdot d(B, A)\). This shows \eqref{it:one-a}. By construction,  any \(B\in \mathcal{B}_k^i\) is contained in \(R_k^i\), and thus for every \(b\in B\), 
\[
d(b, A) \leq r^{i}\cdot r^{k/n} \leq r^2\cdot r^{i-1} \leq r^2 \cdot d(B, A).
\]
This establishes \eqref{it:three-a} and finishes the proof. 
\end{proof}

Let \(\mathcal{B}=(B_i)_{i\in I}\) be a covering as in Proposition~\ref{prop:refined-whitney-decomposition}. As in the proof of Theorem~\ref{thm:Lang-Schlichenmaier-explicit}, we will work in the following with the open sets \(U_i=N_{\delta r_i}(B_i)\), where \(\delta>0\) is a sufficiently small constant. Recall that we use the notation \(r_i=d(B_i, A)\). Fix \(x\in X \setminus A\) and let \(I(x)\subset I\) denote those indices \(i\) such that \(x\in U_i\). For any \(i\in I(x)\) we select \(x_i\in B_i\) such that \(d(x, x_i)< \delta r_i\). Since \(r_i \leq d(x_i, A)\) it follows from Proposition~\ref{prop:refined-whitney-decomposition}\eqref{it:three-a} that
\[
r_i \leq  \delta r_i+\delta r_j+r^2 r_j
\]
for all \(i\), \(j\in I(x)\). In particular, letting \(E=\{ x_i : i\in I(x)\}\), we find that
\[
\diam E\leq 2\delta \cdot \sup_{i\in I(x)} r_i \leq \frac{2\delta}{1-\delta} (\delta+r^2) \inf_{i\in I(x)} r_i.
\]
But \(\inf \{ r_i : i\in I(x)\}\) is less than or equal to \(d(E, A)\), and so if \(\delta=\frac{1}{8 r^2}\), then 
\[
\frac{2\delta}{1-\delta} (\delta+r^2)\leq 4\delta \cdot (\delta+r^2)\leq  r^{\frac{1}{2n}}
\]
and Proposition~\ref{prop:refined-whitney-decomposition}\eqref{it:two-a} tells us that \(E\) meets at most \(n+1\) members of \(\mathcal{B}\). In particular, the family \((U_i)_{i\in I}\) has multiplicity at most \(n+1\). To summarize, we have shown that \((B_i)_{i\in I}\) is a covering of \(X\setminus A\) that satisfies \(\Whitney(n, \alpha, \delta, \gamma)\) for
\begin{equation}\label{eq:explicit-values-of-constants}
 \alpha=40\cdot r^3 \cdot (c+1)\cdot (n+1), \quad \delta=(8 r^2)^{-1}, \quad \gamma=r^2.
\end{equation}
Now, we have everything at hand to prove an explicit version of Lang and Schlichenmaier's extension theorem.

\begin{proof}[Proof of Theorem~\ref{thm:LS-msot-general-version}]
Let \(X\), \(Y\) denote metric spaces and suppose \(A\subset X\) is a closed subset. Assume that \(A\) satisfies \(\Nagata(n-1, c)\) and \(Y\) satisfies \(\LC(n-1, \lambda)\). Let \(f\colon A \to Y\) be a \(1\)-Lipschitz map. In the following, we show that \(f\) admits a Lipschitz extension \(F\colon X \to Y\). We set
\[
r=16\cdot (c+1)\cdot 4^n.
\]
By Proposition~\ref{prop:refined-whitney-decomposition} and the comments following it, we find that \(X\setminus A\) admits a covering that satisfies \(\Whitney(n, \alpha, \delta, \gamma)\) for
\[
 \alpha=40\cdot (c+1)^4 \cdot 128^n , \quad \delta^{-1}=2048\cdot(c+1)^2 \cdot 16^n, \quad \gamma=256\cdot (c+1)^2 \cdot 16^n.
\]
Here, we used \eqref{eq:explicit-values-of-constants} and that \(n+1 \leq 2^n\). Proposition~\ref{prop:iterated-simplicial-extension} tells us that \(Y\) is an \((n, C)\)-simplicial extensor for
\[
C= e^2\cdot \lambda^n \cdot 2^{n} \cdot n^{2n+1},
\]
where we used Stirling's approximation for \(n !\) to obtain this particular value for \(C\). Hence, by Theorem~\ref{thm:basso-theorem} it follows that \(f\) admits a Lipschitz extension \(F\colon X \to Y\) such that
\[
\Lip F \leq 3 \cdot 10^{10}\cdot (c+1)^8\cdot (10^5 \cdot \lambda)^n \cdot n^{2(n+1)}.
\]
Now, since \( 10^{5n}\cdot n^{-n} \leq e^{10^5}\), the desired constant in \eqref{eq:estimate-we-worked-for-really-hard} follows. 
\end{proof}


\subsection{Acknowledgements} I am grateful to Urs Lang for useful comments which simplified the proof of Proposition~\ref{prop:one}. I also thank the anonymous referee for his or her careful reading of my paper and their helpful comments.

\subsection{Author contribution}
The author confirms the sole responsibility for writing of this article and its findings.
\subsection{Conflict of interest}
Authors state no conflict of interest.



\let\oldbibliography\thebibliography
\renewcommand{\thebibliography}[1]{\oldbibliography{#1}
\setlength{\itemsep}{2pt}} 

\bibliographystyle{plain}
\bibliography{refs}

\end{document}